\numberwithin{equation}{section}
\theoremstyle{theorem}
\newtheorem{thm}[equation]{Theorem}
\newtheorem{prop}[equation]{Proposition} 
\newtheorem{cor}[equation]{Corollary}
\theoremstyle{definition}
\newtheorem{ques}[equation]{Question}
\newtheorem{lemma}[equation]{Lemma}
\newtheorem{remark}[equation]{Remark}
\newtheorem{claim}[equation]{Claim}
\newcommand{\ovl}[1]{\overline{#1}}
\newcommand{\pair}[1]{\langle #1\rangle}
\newcommand{\set}[1]{\left\{#1\right\}}
\newcommand{\ppair}[1]{\left\langle #1\right\rangle}
\newcommand{\pr}[1]{\left(#1\right)}
\newcommand{\abs}[1]{\left|#1\right|}
\newcommand{\bb}[1]{\mathbb{#1}} \newcommand{\td}[1]{\widetilde{#1}}
\newcommand{\tr}{\text{tr}}
\newcommand{\D}{\Delta}
\newcommand{\Hess}{\text{Hess}}
\newcommand{\n}{\nabla}
\newcommand{\sbst}{\subseteq}
\newcommand{\h}{\textbf H}
\newcommand{\bd}{\partial}
\title[Planarity and convexity for ancient MCF]
{Planarity and convexity for pinched ancient solutions of mean curvature flow
}
\author{Tang-Kai Lee}
\author{Keaton Naff}
\author{Jingze Zhu}
\address{MIT, Dept. of Math., 77 Massachusetts Avenue, Cambridge, MA 02139}
\address{Lehigh University, Dept. of Math., 17 Memorial Dr E, Bethlehem, PA 18015}
\address{MIT, Dept. of Math., 77 Massachusetts Avenue, Cambridge, MA 02139}
\email{tangkai@mit.edu, ken424@lehigh.edu,  zhujz@mit.edu}
\date{\today}
\begin{document}
\begin{abstract}
We prove a parabolically scale-invariant variation of the planarity estimate in \cite{Na22} for higher codimension mean curvature flow, borrowing ideas from work of Brendle--Huisken--Sinestrari \cite{BHS}. Additionally, we prove convexity for pinched complete ancient solutions of the mean curvature flow in codimension one. Then we put these estimates together to characterize certain pinched complete ancient solutions and shrinkers in higher codimension. We include some discussion of future research directions in this area of mean curvature flow. 
\end{abstract}
\maketitle

\section{\textbf{Introduction}}

In this paper, we prove rigidity theorems for a class of ancient solutions of mean curvature flow (MCF) 
satisfying a curvature pinching condition. 
They can be viewed as improved versions of the planarity estimate of the second author~\cite{Na22} and the convexity estimate of Huisken--Sinestrari~\cite{HS99}.

Consider integers $n$ and $N$ so that $N\ge n+1$. 
In what follows, a (higher codimension) solution of MCF is a family of immersions, $F : M^n \times I \to \mathbb{R}^N$, evolving in the direction of its mean curvature vector, $\partial_t F = H$, which we often denote briefly by writing $M^n_t = F(M^n, t) \subset \mathbb{R}^N$, $t \in I$. 
We say a MCF $M^n_t$ is \textit{uniformly} $c$-pinched if its second fundamental form and mean curvature satisfy the estimate 
$$|A|^2 \le (c-\epsilon_0) |H|^2$$ 
for some $\epsilon_0>0.$
A geometrically interesting choice for $c$ is $c = \frac{1}{n-k}$ for $k \in \{1, \dots, n-1\}$, which can be viewed as a higher dimensional analogue of the condition of $k$-convexity in codimension one. 
We will review geometric significance of this pinching condition, and refer the reader to the introduction of \cite{Na22} for a more detailed discussion of this pinching condition and MCF.

\subsection{Planarity estimate for ancient solutions}
\label{sec:1.1}

To state our first main result, we recall from \cite{Na22} that whenever the mean curvature is non-vanishing (i.e., $|H| > 0$) we can define the principal unit normal vector to $M^n_t$ given by $\nu_1 := |H|^{-1} H$. With respect to this principal normal, the (vector-valued) second fundamental decomposes as a sum $A_{ij} = \hat{A}_{ij} + h_{ij} \nu_1$ of a principal component given by the (scalar-valued) $(0, 2)$-tensor  $h_{ij} = \langle A_{ij},  \nu_1\rangle$ and its (vector-valued) orthogonal complement, which we denote here by $\hat{A}_{ij}$. The vanishing of the tensor $\hat{A}$ is related to planarity of the submanifolds $M^n_t$ \cite[Proposition 2.5]{Na22}. 
Our first main result is the following theorem, where we let  
$$c_n := \min \set{\frac{4}{3n}, \frac{3(n+1)}{2n(n+2)}}
= \begin{cases}
    \frac{3(n+1)}{2n(n+2)}&\text{if }n\le 7\\
    \frac 4{3n}&\text{if }n\ge 8
\end{cases}
$$
in the rest of the paper.

\begin{thm}\label{thm:BHS-type-est}
	Suppose $n \geq 2$ and $N > n + 1$. 
	Let $M^n_t = F(M^n, t) \subset \mathbb{R}^N$, $t\in[0,T]$, be a smooth family of $n$-dimensional, complete, connected, immersed submanifolds evolving by MCF.
	Suppose each time slice $M_t$ has bounded curvature.
	Finally, suppose there exists a constant $\epsilon_0 \in (0, c_n)$ such that $|A|^2 \leq (c_n-\epsilon_0)|H|^2$ holds on $M^n \times [0, T]$. Then either $M^n_t$ is a static, flat copy of $\mathbb{R}^n$ in $\mathbb{R}^N$, or else $|H| > 0$ on $M \times (0, T]$ and there exist constants $C=C(n,\epsilon_0)$ and $\sigma = \sigma(n,\epsilon_0)$ such that
\begin{equation*}
	|\hat A|^2
	\le C \frac{|H|^{2-\sigma}}{t^{\sigma/2}}
\end{equation*}
	holds pointwise on $M \times (0, T]$. 
\end{thm}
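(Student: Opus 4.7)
The plan is to adapt the Brendle--Huisken--Sinestrari technique from \cite{BHS} combined with the planarity identities developed in \cite{Na22}. The scale-invariant quantity to control is
$$g_\sigma := \frac{t^{\sigma/2}\,|\hat A|^2}{|H|^{2-\sigma}},$$
defined wherever $|H|>0$. A preliminary observation is that either $|H|\equiv 0$, in which case $M_t$ is static and flat, or $|H|>0$ on $M\times(0,T]$ by the strict parabolic maximum principle applied to $|H|^2$.

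First I would derive the evolution equations of $|H|^2$ and $|\hat A|^2$ under MCF. Both equations are of reaction--diffusion form with Simons-type reaction terms that are polynomials in the components of $A$ and $\nu_1$. The crucial algebraic step, which selects the constant $c_n$, is to prove that the pinching $|A|^2\le (c_n-\epsilon_0)|H|^2$ implies
$$\bigl(\text{reaction of }|\hat A|^2\bigr)-\pr{1-\tfrac{\sigma}{2}}\frac{|\hat A|^2}{|H|^2}\bigl(\text{reaction of }|H|^2\bigr)\le -\delta\,|\hat A|^2\,|H|^2,$$
for some $\delta=\delta(\epsilon_0,n)>0$ once $\sigma=\sigma(\epsilon_0,n)$ is sufficiently small. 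The two branches in $c_n$ should correspond to two competing algebraic inequalities: one from the self-interaction of $\hat A$ (dominant in high dimensions, giving $\tfrac{4}{3n}$), and one from the mixed interaction of $\hat A$ with the scalar part $h$ and with $H$ (dominant in low dimensions, giving $\tfrac{3(n+1)}{2n(n+2)}$).

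Combining the above with a Kato-type inequality comparing $|\np \hat A|^2$ and $|\n H|^2$, the evolution of $g_\sigma$ should take the schematic form
$$\partial_t g_\sigma \le \Delta g_\sigma + \frac{2-\sigma}{|H|^2}\pair{\n |H|^2,\n g_\sigma}- \delta\,|H|^\sigma\,g_\sigma + \frac{\sigma}{2t}\,g_\sigma,$$
the key point being that whenever $g_\sigma$ is large, the product $t\,|H|^2$ is forced to be large, so the reaction $-\delta|H|^\sigma g_\sigma$ absorbs the time penalty $\tfrac{\sigma}{2t}g_\sigma$. One then runs a Stampacchia iteration on $\int_{M_t}(g_\sigma-k)_+^p$ for large $k$ and $p$: using the Michael--Simon Sobolev inequality on $n$-dimensional submanifolds of $\mathbb{R}^N$, the negative reaction integrates to a term controlling a higher $L^q$ norm of $(g_\sigma-k)_+$ by its $L^p$ norm, and iteration $p\to\infty$ yields $\sup_{M_t} g_\sigma\le C$.

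The main obstacle will be the first step: extracting the correct sign from the reaction terms at exactly the threshold $c_n$. In higher codimension the principal normal $\nu_1=H/|H|$ is not parallel in time, so the decomposition $A=\hat A + h\,\nu_1$ produces cross terms in the evolution of $|\hat A|^2$ which are absent in codimension one; moreover, the Simons identity couples normal and tangential directions in a nontrivial way. Careful tensor algebra, exploiting the constraint $\mathrm{tr}_g \hat A=0$ together with the pinching, should be needed to obtain the sharp numerical constant and produce the two-branch structure of $c_n$.
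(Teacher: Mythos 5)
Your proposal captures the correct ingredients at a high level (the BHS-style scale-invariant reformulation of \cite{Na22}, the role of the constant $c_n$), but it diverges from the paper's proof in a way that produces genuine gaps, particularly in the non-compact setting.

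First, the paper's argument is a \emph{pointwise maximum principle} argument, not a Stampacchia iteration. This is not an incidental stylistic choice: the paper points out (Remark~\ref{rmk:MCF-evol}) that what makes the planarity estimate special among MCF pinching estimates is that the evolution of $|\hat A|^2$ has quadratically small reaction (order $\sim |\hat A|^4$), so the Ricci-flow-style maximum-principle argument of \cite{BHS} and \cite{L24} goes through. Your Stampacchia proposal reintroduces exactly the machinery that the structure of $|\hat A|^2$ allows one to avoid, and, more seriously, Stampacchia iteration via the Michael--Simon Sobolev inequality does not obviously apply here: the flows are complete and non-compact, so every $L^p$ norm must be cut off, and you would have to show that the tail terms vanish without any integrability hypothesis beyond bounded curvature. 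You do not address this. The paper instead uses the tensor maximum principle for complete manifolds with bounded curvature (citing \cite{CCG+08}).

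Second, your ansatz $g_\sigma = t^{\sigma/2}|\hat A|^2 / |H|^{2-\sigma}$ builds the time weight in from the start and therefore produces the positive term $\frac{\sigma}{2t}g_\sigma$ in the evolution. Your heuristic that ``large $g_\sigma$ forces $t|H|^2$ large, so the reaction absorbs the penalty'' fails precisely as $t\to 0^+$: on the level set $\{g_\sigma\geq k\}$ one only gets $(t|H|^2)^{\sigma/2}\gtrsim k$, hence $\delta|H|^\sigma\gtrsim k\,t^{-\sigma/2}$, which beats $\frac{\sigma}{2t}$ only when $t^{1-\sigma/2}\gtrsim k^{-1}$ --- not uniformly for small $t$. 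The paper avoids this by working with the unweighted $u = |\hat A|^2/f^{1-\sigma}$ (where $f:=c_0|H|^2-|A|^2$, not $|H|^2$), showing via the identity from \cite{Na22} and the gradient-absorption trick of \cite{L24} that $(\partial_t - \Delta)u^{1/\sigma}\leq -C_0^{-1}u^{2/\sigma}$, and then comparing to the Bernoulli ODE solution to conclude $t\,u^{1/\sigma}\leq C_0$. The $t$-weight is an output of the ODE, not an input into the quantity.

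Third, using $|H|^{2-\sigma}$ in the denominator rather than $f^{1-\sigma}$ with $f=c_0|H|^2-|A|^2$ discards the key structural facts: $f\geq \frac12\epsilon_0|H|^2\geq 0$ by pinching, and $(\partial_t - \Delta)f\geq \frac{2}{nc_0-1}|\hat A|^2 f\geq 0$, which simultaneously (a) allows the strong maximum principle to split the static-plane case cleanly, and (b) supplies the quadratic reaction lower bound needed to close the Bernoulli estimate. With $|H|^2$ alone you do not have a nonnegative reaction (the evolution of $|H|^2$ includes $-2|\nabla^\perp H|^2$), so your opening remark that the strict maximum principle applied to $|H|^2$ gives $|H|>0$ on $(0,T]$ is also not correct as stated; the paper establishes this through $f$.

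To repair the proposal, replace $|H|^{2-\sigma}$ by $f^{1-\sigma}$, drop the $t^{\sigma/2}$ factor from the quantity, use the identity from \cite[Lemma 4.9]{Na22} with $\sigma=\delta/2$ and the Young-inequality absorption from \cite{L24} to obtain the Bernoulli-type inequality $(\partial_t-\Delta)u^{1/\sigma}\leq -C_0^{-1}u^{2/\sigma}$, and invoke the parabolic maximum principle for complete flows with bounded curvature rather than Stampacchia iteration.
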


To contextualize the result above, we offer the following remarks:
\begin{itemize}
\item The main result of \cite{Na22} asserts that whenever $M_t^n$ is a compact flow such that its initial data $M_0^n$ satisfies the strict pinching $|A|^2 < c_n |H|^2$, then there exist constants $C = C(M_0), \sigma = \sigma(M_0)$ such that $|\hat {A}|^2 < C |H|^{2-\sigma}$. Compared to this original planarity estimate, the estimate above is \textit{parabolically scale-invariant} and the \textit{constants depend only upon the pinching}, not on the initial submanifold.  
\item Although the planarity estimate proved in \cite{Na22} is only asserted for $n \geq 5$, in fact the proof works for $n \geq 2$ (see Appendix \ref{app:planarity}), and so we assert Theorem \ref{thm:BHS-type-est} for $n \geq 2$. However, we remark that Andrews and Baker have proved a stronger estimate when $n \in \{2, 3, 4\}$. Indeed, for such $n$, one has that $c_n \leq \min\{\frac{4}{3n}, \frac{1}{n-1}\}$ which by the Theorem 4 of \cite{AB} implies the stronger estimate $|\mathring{A}|^2 \leq C |H|^{2-\sigma}$. This estimate from \cite{AB} is not parabolically scale-invariant like planarity estimate above, but a scale-invariant version of Andrews and Baker's estimate can be found in \cite{LN21}, building upon the codimension one work \cite{HS15}. 
\item The theorem above essentially implies the main result of \cite{Na22}. Indeed, one can use the estimate above to obtain the bound $|\hat{A}|^2 \leq C(n, \epsilon_0, \delta) |H|^{2-\sigma}$ for $t  \in [\delta, T]$, $\delta > 0$ and then bound $|\hat{A}|^2/ |H|^{2-\sigma}$ on the compact set $M \times [0, \delta]$ to cover the remaining times. 
\item The constant $c_n$ is a technical one that arises by the maximum principle bases proofs of 
\begin{itemize}
    \item the preservation of $c$-pinching in MCF, done by Andrews and Baker \cite{AB}, which requires $c \leq \frac{4}{3n}$; and
    \item the original planarity estimate, which requires $c \leq c_n$. 
\end{itemize}
Further discussion of precisely how the constant $c_n$ arises can be found in Appendix \ref{app:planarity}.
\end{itemize}

An important consequence of the main result above is the following rigidity theorem for ancient pinched solutions of MCF. 
To state it in the most general case, we mention that the pointwise estimate that leads to Theorem~\ref{thm:BHS-type-est} can be used to obtain an integral version of the improved planarity estimate, which we record in Theorem~\ref{thm:BHS-type-est-int-version}.
Because of the Gaussian weight in this version of the estimate, we can consider flows with unbounded curvature of polynomial growth, leading to the following rigidity theorem.
We remark that a planarity result for ancient MCFs was proven by Colding--Minicozzi~\cite{CM20} under a different assumption; instead of pinching assumption, they assume that the blow-down limit is a multiplicity one generalized cylinder.

\begin{cor}\label{cor:complete-bdd-curv-pinched}
	Let $M^n_t = F(M, t) \subset \mathbb{R}^N$, $t \in (-\infty, 0)$, be an ancient smooth family of $n$-dimensional, complete, connected, immersed submanifolds evolving by MCF with finite entropy.  
	Suppose each $M_t$ has curvature of polynomial growth. 
    If $M_t$ is uniformly $c_n$-pinched,
    then the flow is codimension one for all time.
\end{cor}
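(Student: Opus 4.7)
The plan is to apply the integral version of the planarity estimate (Theorem~\ref{thm:BHS-type-est-int-version}) on arbitrarily long time intervals within the ancient flow, exploiting its parabolic scale-invariance. First, handle the degenerate case where $|H| \equiv 0$ on all of $M \times (-\infty, 0)$: then the uniform pinching $|A|^2 \le (c_n - \epsilon_0)|H|^2$ forces $A \equiv 0$, so by connectedness each $M_t$ is a single affine $n$-plane, which sits in many $(n+1)$-dimensional subspaces and is therefore codimension one in the stated sense.

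Otherwise, fix an arbitrary $t^* \in (-\infty, 0)$ and, for each $T > 0$, consider the time-translated flow $\widetilde{M}_s := M_{t^* - T + s}$ for $s \in [0, T]$. This is a smooth, complete, connected, uniformly $c_n$-pinched flow of polynomial curvature growth and finite entropy, so Theorem~\ref{thm:BHS-type-est-int-version} applies. Schematically, the integral planarity estimate at time $s = T$ (corresponding to the original time $t^*$) bounds a Gaussian-weighted integral of a power of $|\hat{A}|$ by a factor proportional to $T^{-\sigma/2}$ times a Gaussian-weighted integral of a power of $|H|$. Polynomial growth of $|H|$ combined with the Gaussian kernel keeps this latter integral finite and bounded independently of $T$ (depending only on data at time $t^*$), so the $T^{-\sigma/2}$ prefactor drives the left-hand side to zero as $T \to \infty$.

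Passing to the limit $T \to \infty$ and using positivity of the Gaussian weight, this forces $\hat{A} \equiv 0$ on $M_{t^*}$. Since $t^* \in (-\infty, 0)$ was arbitrary, $\hat{A} \equiv 0$ on the entire ancient flow. By \cite[Proposition~2.5]{Na22}, applied on each time slice where $|H| > 0$, each such $M_{t^*}$ lies in an affine hyperplane of $\mathbb{R}^N$; smoothness in $t$ together with connectedness allows this hyperplane to be chosen independently of $t$, and hence the entire flow is codimension one.

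The main obstacle I anticipate is verifying that the hypotheses of Theorem~\ref{thm:BHS-type-est-int-version} pass cleanly from the ancient flow to its time-translates $\widetilde{M}_s$ with control uniform in $T$. In particular, the finite-entropy assumption should be what keeps the Gaussian-weighted integrals on the right-hand side uniformly bounded as $T \to \infty$, while the polynomial curvature growth is what ensures these integrals are well-defined in the first place. A secondary subtlety is handling the locus where $|H|$ vanishes, where $\hat{A}$ is not directly defined; this is controlled by the dichotomy in Theorem~\ref{thm:BHS-type-est}, which allows $|H|$ to vanish only on a static plane and so does not interfere with the limiting argument.
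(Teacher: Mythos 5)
Your approach is the right one and matches what the paper intends when it says Theorem~\ref{thm:BHS-type-est-int-version} implies Corollary~\ref{cor:complete-bdd-curv-pinched}: time-translate the ancient flow to run on $[0,T]$, apply the scale-invariant integral estimate, and send $T\to\infty$ so that the right-hand side collapses. Two technical points are worth tightening. First, you have misremembered the form of the integral estimate: Theorem~\ref{thm:BHS-type-est-int-version} bounds $\int_M u^{1/\sigma}\,\Phi_T\,dV_t \le C\Lambda/t$ where $u=|\hat A|^2/f^{1-\sigma}$ already absorbs the mean-curvature normalization and $\Lambda$ is just the entropy bound; there is no separate Gaussian-weighted integral of a power of $|H|$ on the right that needs to be controlled as the interval lengthens. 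This actually simplifies your argument, since the ``secondary subtlety'' you raise in the last paragraph (bounding the $|H|$ integral uniformly in $T$) does not arise. Second, you propose to read off the estimate at $s=T$, but the weight $\Phi_T(\cdot,s)=(4\pi(T-s))^{-n/2}e^{-|x|^2/4(T-s)}$ degenerates there. The fix is standard: fix the terminal time of the translated flow at $t^*+\varepsilon$ (with $\varepsilon>0$ small) rather than at $t^*$ itself, so that when the estimate is evaluated at the time corresponding to $t^*$, the weight is a fixed honest Gaussian of variance $\sim\varepsilon$ and the bound $C\Lambda/(t^*-T_0)$ still tends to $0$ as $T_0\to-\infty$. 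With these adjustments, positivity of the weight and continuity give $u\equiv 0$, hence $\hat A\equiv 0$ on each slice, and the passage from slice-wise planarity to a single fixed hyperplane via smoothness in time and uniqueness of the flow is fine as you sketch it. The degenerate $|H|\equiv 0$ dichotomy is handled correctly; for the nondegenerate case, the relevant dichotomy is the one built into Theorem~\ref{thm:BHS-type-est-int-version}, not Theorem~\ref{thm:BHS-type-est} (which needs bounded curvature), though both say the same thing.
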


From a more geometric viewpoint, for $k \in \{0, 1, \dots, n-1\}$, we observe that a family of (codimension one) shrinking cylinders $\mathbb{S}^{n-k}\pr{\sqrt{-2(n-k)t}} \times \mathbb{R}^{k}$ 
satisfies $|A|^2 = \frac{1}{n-k} |H|^2$. 
Moreover, the codimension one, noncollapsed, ancient solutions of mean curvature flow that have such a cylinder as a blow-down satisfy $|A|^2 < \frac{1}{n+1-k} |H|^2$. 
Since $\frac{1}{n-k} \leq c_n$ whenever $n \geq 4k$ and $k \geq 2$, or  $n \geq 5$ if $k = 1$, or $n \geq 1$ if $k =0$, the class of uniformly $c_n$-pinched ancient solutions contains many of the well-studied codimension one noncollapsed ancient solutions for $n$ sufficiently large.

Note that based on \cite{Na22}, the blow-up limits (i.e., ancient solutions that arise as parabolic rescalings of the flow) at the first singular time of a closed strictly $c_n$-pinched MCF must be codimension one. Corollary~\ref{cor:complete-bdd-curv-pinched} deals with general smooth ancient solutions, which, a priori, forms a possibly distinct class of solutions from the class of blow-up limits.\footnote{For instance, in codimension one, it is not yet known whether ancient ovals can arise as blow-up limits after the first singular time of a MCF. The behavior would be fairly pathological, cf. \cite{CH21}} It is not yet known whether the blow-up limits of a closed $c_n$-pinched flow must have bounded curvature, nor whether every ancient solution with bounded curvature arises as a blow-up limit. A natural question is whether the results above can hold without the assumption of bounded curvature. This seems related to recent work about the existence of local Ricci flow established in \cite{LT22}. In Section \ref{sec:Question}, we list several open questions in this area of mean curvature.

Let us now briefly describe the proof of the improved planarity estimate.
In his ground-breaking work on Ricci flow \cite{Ham82}, Hamilton derived the evolution equation for $u = |\mathring{\mathrm{Ric}}|^2/ R^{2-\sigma}$ to establishing a pinching estimate for compact solutions of Ricci flow in dimension three that have positive Ricci curvature. Subsequently, by exploiting the structure of the equation for $(\partial_t - \Delta) u$, Brendle--Huisken--Sinestrari proved a parabolically scale invariant version of Hamilton's estimate and used it to prove a rigidity theorem ancient solutions of Ricci flow \cite{BHS}. Similar work was later done for MCF by Huisken--Sinestrari \cite{HS15}. We use similar ideas and exploit the evolution for $u = |\hat{A}|^2/((c_n-\epsilon_0) |H|^2 - |A|^2)$ studied in \cite{Na22} to achieve our results.

\subsection{Convexity estimate for ancient solutions}

Our next result investigates convexity of mean convex ancient solutions.
This is related to Corollary~\ref{cor:complete-bdd-curv-pinched}, which implies that every uniformly $c_n$-pinched ancient MCF with finite entropy and polynomial growth curvature is codimension one, and hence a mean convex MCF since $c_n\le 1.$
The convexity of ancient solutions of MCF is also an important question on its own, cf. \cite[Open Problems 14.35 and 14.36]{ACGL}.

Our second main result proves the convexity of a pinched ancient MCF with bounded entropy and a curvature growth assumption.
Different from the planarity estimate, we can instead prove the convexity estimate for any $L$-pinched flows with \textit{arbitrary} $L<\infty.$

\begin{thm}
\label{thm:improved-conv}
	Let $M_t\sbst\bb R^{n+1}$ be an ancient mean convex codimension one MCF with finite entropy.
    Suppose each $M_t$ has curvature of polynomial growth.
	If $M_t$ is uniformly $L$-pinched for some $L>0$, then $M_t$ is convex.	
\end{thm}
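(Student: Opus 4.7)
The overall strategy is to establish a parabolically scale-invariant convexity estimate in the spirit of Huisken--Sinestrari \cite{HS15}, but valid in our complete, ancient, non-compact setting, and then let the reference time tend to $-\infty$. Since $M_t$ is codimension one and mean convex, $H > 0$ everywhere and the principal curvatures $\lambda_1 \leq \cdots \leq \lambda_n$ are well defined. Fix a small $\sigma > 0$ and consider (a suitable smoothing of) the quantity
\[
f_\sigma = \frac{(-\lambda_1)_+^2}{H^{2-\sigma}}.
\]
The target is the parabolically scale-invariant pointwise bound
\[
f_\sigma(x_0, t_0) \leq C(n, L, \epsilon_0)\,(t_0 - s)^{-\sigma/2}
\]
for every reference point $(x_0, t_0)$ on the flow and every $s < t_0$, with $C$ independent of $s$. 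Since $M_t$ is ancient, one may then send $s \to -\infty$, forcing $(-\lambda_1)_+ \equiv 0$ and hence $\lambda_1 \geq 0$, which is exactly convexity.

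To prove such a bound, I would first derive a differential inequality of Huisken--Sinestrari type,
\[
(\partial_t - \Delta) f_\sigma \leq \frac{2(1-\sigma)}{H}\pair{\nabla H, \nabla f_\sigma} + R_\sigma,
\]
where $R_\sigma$ is a reaction term that in the compact setting is controlled by the standard algebraic computations with $\lambda_1$. Because $\lambda_1$ is only Lipschitz where its multiplicity jumps, one either replaces $(-\lambda_1)_+$ by a smooth symmetric function of $A$ as in \cite{HS99}, or performs a tensorial approximation. The pinching $|A|^2 \leq (L - \epsilon_0)H^2$ then lets us absorb $|A|^2$ into $H^2$ inside $R_\sigma$ at a scale-invariant cost.

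To upgrade this differential inequality to the desired pointwise bound in the complete non-compact ancient setting, I would test it against Huisken's backwards heat kernel $\rho_{(x_0, t_0)}$ centered at the reference point and run a weighted maximum principle, following the philosophy used for Theorem~\ref{thm:BHS-type-est} and in Colding--Minicozzi \cite{CM20}. The finite entropy assumption, via Huisken's monotonicity formula, provides the Gaussian-weighted integrability needed to control global terms, while the polynomial curvature growth justifies the integration by parts required to turn the evolution inequality into a weighted $L^2$ statement. A Stampacchia-type iteration against the Gaussian weight then promotes this to the pointwise bound above.

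The main obstacle is closing this weighted maximum principle in the non-compact ancient setting. Two issues in particular need care: (i) justifying the computations through the non-smoothness of $\lambda_1$; and (ii) dominating the reaction term $R_\sigma$ globally, for which the pinching together with polynomial curvature growth is exactly what guarantees that the requisite Gaussian integrals converge. The fact that arbitrary $L < \infty$ is allowed -- unlike the sharp $c_n$ constraint in Theorem~\ref{thm:BHS-type-est} -- reflects that in the codimension one mean convex case the reaction term for the lowest eigenvalue is already controlled by mean convexity alone in the compact setting \cite{HS99}; the pinching enters here only to close the maximum principle at infinity.
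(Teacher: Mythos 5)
Your approach has a fundamental gap, and the claim in your last paragraph points to it. You assert that ``the reaction term for the lowest eigenvalue is already controlled by mean convexity alone in the compact setting \cite{HS99},'' so that a BHS/HS15-style weighted maximum principle for $f_\sigma=(-\lambda_1)_+^2/H^{2-\sigma}$ would close once you handle the noncompactness. This is not the case, and in fact the paper's Remark~\ref{rmk:MCF-evol} is specifically about this point: the evolution of $\lambda_1$ (unlike that of $|\hat A|^2$) produces a \emph{positive} reaction term of the form $\sigma\,|A|^2 \cdot ((-\lambda_1)_+/H^{1-\sigma})$ once you divide by $H^{1-\sigma}$, and this term cannot be absorbed by a pure maximum principle argument --- not even on a closed manifold. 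That is precisely why Huisken--Sinestrari in \cite{HS99} resort to a Stampacchia iteration whose engine is a \emph{Poincar\'e-type inequality coming from Simons' identity}. Your sketch derives the expected differential inequality and then invokes ``a weighted maximum principle'' and a ``Stampacchia-type iteration'' without ever stating or using such a Poincar\'e inequality, which is where all the work actually happens. Without it, the positive reaction term $\sigma p \int G^p |A|^2 \Phi$ cannot be dominated, and the argument does not close.

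The paper's route is genuinely different from what you propose. It does not aim for a scale-invariant pointwise bound $f_\sigma(x_0,t_0)\leq C(t_0-s)^{-\sigma/2}$; instead it studies the weighted integral $\int_M G^p \Phi\, dV_t$, proves a Poincar\'e inequality (Proposition~\ref{prop:Poincare}) exploiting the lower bound $|A\otimes A^2 - A^2\otimes A|^2 \geq \alpha |A|^2 H^4$ valid on the support set $\{\lambda_1\leq -\bar\varepsilon H\}$, and then reduces the problem to an ODE $J' \leq -C_1(-t)^{1/2}J^5$ for $J(t)=(-t)^{-1/8}\int G^p \Phi$, which has only the trivial ancient solution. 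Two further corrections: (i) the pinching does not ``enter only to close the maximum principle at infinity'' --- it enters both the gradient lower bound (Lemma~\ref{Appendix-grad-lower-bound}) and the Simons-identity lower bound that powers the Poincar\'e inequality; and (ii) the reason arbitrary $L<\infty$ is admissible is not that the reaction term is already controlled, but that the goal is merely to show $G_\varepsilon\equiv 0$, so one only cares about the set where $G_\varepsilon>0$, on which $\lambda_1 \leq -\varepsilon L H$ holds automatically and provides the definite lower bound needed.
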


The pinching assumption is necessary since there are examples of non-convex mean convex ancient solutions coming out from some minimal hypersurfaces, constructed by Mramor--Payne \cite{MP21}.
It is also known that such an ancient solution can be produced from a minimal hypersurface with finite total curvature \cite{CHL}.
It will be interesting to see whether these flows are the only non-convex mean convex non-trivial ancient solutions besides the trivial static flows given by minimal hypersurfaces.

Theorem~\ref{thm:improved-conv} can be interpreted as an improvement estimate which shows that any $L$-pinched ancient mean curvature flow with finite entropy has to be $1$-pinched (possibly not uniformly).
This can be compared with the improved non-collapsing estimate for mean convex ancient solutions, see~\cite{HK15}, where they use a compactness argument to show that any $\alpha$-noncollapsed ancient mean curvature flow has to be $1$-noncollapsed.
Applying the methods to prove Theorem~\ref{thm:improved-conv} in Section~\ref{sec:conv} and using Brendle's estimates in~\cite{B15} may provide another proof of this improvement result based on a PDE method.

Theorem~\ref{thm:improved-conv} is a convexity-improvement estimate for ancient solutions. The earliest work on convexity estimates was done for blow-up limits.
In \cite{HS99a, HS99}, Huisken--Sinestrari established the first convexity estimate, which implies that all blow-up limits at the first singular time of a mean convex MCF must be convex.
White~\cite{W03} used a different method to prove the convexity of blow-up limits of a mean convex MCF, and he can extend it to subsequent singular times based on his geometric measure techniques.

As for ancient solutions, the earliest convexity estimates exploited non-collapsing. See for instance ~\cite{W03, HK}, and also ~\cite{L17,BC19, ADS20, BC21}.
The results in \cite{L17} are most related to ours.
In \cite{L17}, inspired by the ideas in \cite{HS15} (cf. \cite{HH16}), Langford showed that a pinched convex ancient MCF with \textit{bounded polynomially rescaled volume} \cite[Equation~(1.15)]{L17} must be convex. 
Bounded rescaled volume is a mild condition, and it is interesting to study the connection between it and the bounded entropy assumption, which is another natural condition in the study of MCF singularities.

The main ingredient that allows us to deal with the noncompact case in Theorem~\ref{thm:improved-conv} is to consider integral estimates with the Gaussian weight
$$\Phi(x,t)
:= \frac 1{(-4\pi t)^{n/2}}
e^{\frac{|x|^2}{4t}}.$$
In the original convexity estimate in \cite{HS99}, integral estimates already play an essential role, since evolution equations in MCF usually have worse reaction terms, preventing us from applying the maximum principle directly, see Remark~\ref{rmk:MCF-evol}.
However, standard integral estimates cannot deal with noncompact flows, even with a bounded curvature assumption.
Thus, we introduce the Gaussian weight and study the evolution of the total weighted integral of $\lambda_1,$ the smallest principal curvature of the hypersurface.
The weight complicates the growth estimate of $\lambda_1,$ but it can successfully reduce the problem to the study of an ordinary differential equation.
Careful analysis implies that such an ODE only has trivial non-negative ancient solutions, which leads to our convexity estimate $\lambda_1\ge -\varepsilon H$ for all $\varepsilon>0.$

\subsection{Consequences of the main estimates} 
\label{sec:1.3}

Corollary~\ref{cor:complete-bdd-curv-pinched} and Theorem~\ref{thm:improved-conv} are expected to have interesting applications to the understanding of flow behavior near singularities.
First, we can combine Theorem~\ref{thm:improved-conv} with the main theorem in \cite{BLL} to obtain:

\begin{cor}
\label{cor:convex-then-noncollapsing}
	Let $M_t\sbst \bb R^{n+1}$ be an ancient mean convex codimension one MCF with finite entropy.
	Suppose each $M_t$ has curvature of polynomial growth.
	If $M_t$ is uniformly $1$-pinched, then $M_t$ is a non-collapsed convex ancient solution.
\end{cor}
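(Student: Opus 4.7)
The plan is to peel off the two conclusions (convexity and non-collapsing) in succession, using Theorem~\ref{thm:improved-conv} for the first and the main theorem of \cite{BLL} for the second.

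First, observe that uniform $1$-pinching is a special case of uniform $L$-pinching (with $L=1$), so all hypotheses of Theorem~\ref{thm:improved-conv} are met: $M_t$ is an ancient mean convex codimension one flow with finite entropy, each time slice has polynomially growing curvature, and the uniform pinching assumption holds. Applying Theorem~\ref{thm:improved-conv} directly yields that every time slice $M_t$ is (weakly) convex. Thus the principal curvatures satisfy $\lambda_1\ge 0$ everywhere, and the first assertion of the corollary is immediate.

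Second, I would invoke the main theorem of \cite{BLL}, which characterizes non-collapsing for convex ancient mean curvature flows under an entropy/growth hypothesis. Since we now know $M_t$ is a convex ancient flow with finite entropy and polynomial curvature growth, the hypotheses of \cite{BLL} are satisfied, and non-collapsing follows. Combining the two conclusions gives that $M_t$ is a non-collapsed convex ancient solution.

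The only step requiring care is the second one: one must verify that the precise hypotheses of the main theorem of \cite{BLL} are implied by our setting. In particular, one should check that the regularity and growth conditions assumed in \cite{BLL} (convexity, finite entropy, and any auxiliary asymptotic or curvature conditions) are all implied by the combination of Theorem~\ref{thm:improved-conv} with our standing assumptions of finite entropy and polynomial curvature growth. Assuming those hypotheses line up cleanly, the corollary follows immediately by concatenation; this is expected to be the main, and essentially only, obstacle, since the argument is otherwise a direct application of two existing results.
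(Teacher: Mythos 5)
Your high-level strategy matches the paper's exactly: first apply Theorem~\ref{thm:improved-conv} to get convexity, then invoke \cite{BLL} for non-collapsing. The first step is fine. But the ``main, and essentially only, obstacle'' you flag at the end is precisely what your proof leaves unresolved, and it is the only nontrivial content of this corollary. Theorem~1.1 of \cite{BLL} is stated for convex ancient flows satisfying a \emph{uniform $(n-1)$-convexity} hypothesis (a quantitative lower bound on the sum of the $n-1$ smallest principal curvatures relative to $H$), not merely for convex ancient flows with finite entropy. So the hypotheses do \emph{not} ``line up cleanly'' from convexity alone; a pinching-to-convexity-grading argument is needed.

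What the paper actually observes is that, in codimension one and after convexity is established, \emph{uniform $1$-pinching implies uniform $(n-1)$-convexity}. Concretely: convexity gives $\lambda_i \geq 0$ for all $i$, hence $\lambda_n \leq H$, and the pinching $|A|^2 \leq (1-\epsilon_0)H^2$ forces $\lambda_n^2 \leq (1-\epsilon_0)H^2$, so $\lambda_1 + \cdots + \lambda_{n-1} = H - \lambda_n \geq (1-\sqrt{1-\epsilon_0})H$, which is a uniform $(n-1)$-convexity bound. This is the short but essential step you would need to add; once it is in place, \cite[Theorem~1.1]{BLL} applies and the corollary follows.
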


Indeed, if $M_t$ is uniformly $1$-pinched, then by Theorem \ref{thm:improved-conv}, $M_t$ must be convex. Since uniform $1$-pinching implies uniform $(n-1)$-convexity in codimension one, the result now follows from \cite[Theorem 1.1]{BLL} (see also \cite{W11}). 
Other works related to establishing non-collapsing on ancient solutions may be found in \cite{Na-Sing} (which considers the relationship between pointwise gradient estimates and non-collapsing in the two-convex setting) and a local non-collapsing result \cite{BN24}, which provides another route towards non-collapsing when the blow-down of an ancient solution is known to be a generalized cylinder.

By combining Corollary~\ref{cor:convex-then-noncollapsing} with Corollary~\ref{cor:complete-bdd-curv-pinched} and previous classificaiton results in \cite{BC19, BC21, ADS20},
we get the following classification results for $c_n$-pinched flows.

\begin{cor}
\label{cor:BC-ADS-HS-LN}
	Let $M_t\sbst  \bb R^N$ be an ancient smooth family of $n$-dimensional, complete, immersed submanifolds evolving by MCF.
	If $M_t$ has bounded entropy, is uniformly $c_n$-pinched, and each $M_t$ has curvature of polynomial growth, then $M_t$ is a non-collapsing convex ancient solution.
	Moreover: \\
	(1) If $n\ge 5$ and the flow is uniformly $\frac 1{n-2}$-pinched, then the flow is one of the shrinking sphere, the shrinking cylinder, the translating bowl soliton,  the ancient oval, and the static plane.\\
    (2) If $n\ge 5$ and the flow is uniformly $\frac 1{n-1}$-pinched, then the flow is either the shrinking sphere or the static plane.\\
    (3) If $n\in\set{2,3,4},$ then the flow is either the shrinking sphere or the static plane.
\end{cor}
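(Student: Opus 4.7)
The plan is to chain the two main results of the paper with known codimension-one classification theorems and then prune the resulting list of possibilities by a pointwise exclusion argument based on the stronger pinching hypotheses.

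The first step is to apply Corollary \ref{cor:complete-bdd-curv-pinched} to the uniformly $c_n$-pinched flow, yielding that $M_t$ is codimension one so that we may view $M_t \subset \mathbb{R}^{n+1}$. Since $c_n \le 1$ for every $n \ge 2$, the flow is then uniformly $1$-pinched and mean convex, and Corollary \ref{cor:convex-then-noncollapsing} implies $M_t$ is a non-collapsed convex ancient solution---this is the main assertion of the corollary.

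For part (1), uniform $\frac{1}{n-2}$-pinching together with convexity implies uniform two-convexity $\lambda_1 + \lambda_2 \ge \alpha H$ for some $\alpha = \alpha(n, \epsilon_0) > 0$, via a short Cauchy--Schwarz argument on $\sum_{i \ge 3} \lambda_i$. Combined with non-collapsing, convexity, and finite entropy, I would then invoke the classification of non-collapsed uniformly two-convex ancient solutions due to Brendle--Choi \cite{BC19, BC21}, together with the ancient oval of Angenent--Daskalopoulos--Sesum \cite{ADS20}, to extract the five listed solutions. For part (2), the shrinking cylinder $\mathbb{S}^{n-1} \times \mathbb{R}$ realizes the ratio $|A|^2/|H|^2 \equiv \frac{1}{n-1}$ pointwise, while the translating bowl and the ancient oval both have regions asymptotic to a shrinking cylinder along which this ratio tends to $\frac{1}{n-1}$; hence uniform $\frac{1}{n-1}$-pinching rules out all three and leaves only the shrinking sphere and the static plane. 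For part (3), a direct calculation shows $c_n < \frac{1}{n-1}$ whenever $n \in \{2,3,4\}$, so uniform $c_n$-pinching implies uniform $\frac{1}{n-1}$-pinching, and the exclusion argument of (2) applies once combined with the corresponding low-dimensional classification of non-collapsed convex ancient mean curvature flows.

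The main technical care is needed in part (1), where one must verify that the hypotheses of the Brendle--Choi and Angenent--Daskalopoulos--Sesum classification---typically formulated for closed convex flows with prescribed asymptotic behavior---are subsumed by our combination of finite entropy, polynomial curvature growth, and the non-collapsing produced by Corollary \ref{cor:convex-then-noncollapsing}. In part (3), an analogous but more delicate check is required to pin down the corresponding low-dimensional classification before running the exclusion; once that is in place, the logic is a straightforward concatenation of the previous two corollaries with off-the-shelf rigidity results.
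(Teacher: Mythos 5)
Your proposal is correct in its broad structure and matches the paper exactly in the main assertion and in part (1): apply Corollary \ref{cor:complete-bdd-curv-pinched} to reduce to codimension one, apply Corollary \ref{cor:convex-then-noncollapsing} to obtain a non-collapsed convex ancient solution, observe that uniform $\frac{1}{n-2}$-pinching together with convexity gives uniform two-convexity, and invoke the Brendle--Choi / Angenent--Daskalopoulos--Sesum classification.

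For parts (2) and (3), however, you take a genuinely different route from the paper. You propose an exclusion argument: starting from the list of solutions produced in (1), you rule out the shrinking cylinder, the bowl soliton, and the ancient oval by observing that each of these attains (or asymptotically saturates) $|A|^2/|H|^2 = \frac{1}{n-1}$ along a cylindrical region, contradicting uniform $\frac{1}{n-1}$-pinching. This works, but it leans on the detailed asymptotic structure of the bowl and the oval (that each has a region converging to $\mathbb S^{n-1}\times\mathbb R$), and for part (3) it requires the full two-convex classification \cite{BC19,BC21,ADS20} to be in hand in dimensions $n=2,3,4$, together with the additional observation (which you leave implicit) that $c_n < \frac{1}{n-2}$ for $n=3,4$ so that the flow is in fact two-convex. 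The paper instead invokes a new compactness criterion, Proposition \ref{prop:n-1-pinched-compact} (built on the Lee--Topping result \cite{LT22-PIC}), to conclude directly that any uniformly $\frac{1}{n-1}$-pinched slice is compact or flat; in the compact case one applies the Lynch--Nguyen classification \cite[Theorem 1.1]{LN21} rather than re-running the classification/exclusion machinery. The paper's route is cleaner because it sidesteps the cylindrical asymptotics entirely, handles the $n=3,4$ cases without appealing to BC--ADS at all, and is what allows the remark (after the corollary's proof) that the pinching constant can be upgraded from $c_n$ to $\frac{4}{3n}$ for $n\in\{3,4\}$ -- an improvement your exclusion argument does not immediately yield. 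Only for $n=2$ does the paper proceed in the spirit you describe, noting $c_2 = \frac{9}{16} < 1$ and appealing to \cite{BC19,ADS20}. Your proof works; the paper's is simply more economical and extracts slightly more.
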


We obtain the first part of the corollary based on the integral planarity estimate and the classification results in \cite{BC21, ADS20}.
For the second part, when the flow is compact, it was discussed in \cite{LN21}, where Lynch--Nguyen studied compact uniformly $\min\set{\frac1{n-1},\frac 4{3n}}$-pinched flows.
Here, by observing that their compactness criterion can be generalized by a recent result of M.-C. Lee and Topping~\cite{LT22-PIC}, we are able to remove the compactness assumption.
In fact, in the low dimensional case when $n\in\set{3,4,5,6,7},$ we can instead use the better constant $\min\set{\frac1{n-1},\frac 4{3n}}$.
See the remark after Corollary~\ref{cor:cyl-shrinker}.

A version of Corollary \ref{cor:BC-ADS-HS-LN} is expected to hold for $\frac{1}{n-3}$-pinched flows as well. There has recently been a lot of progress in the classification program of non-collapsed singularities in $\bb R^4$ (see \cite{Z22, CHH23, CHH24, DH24, CDDHS22}), which was finally concluded by Choi--Haslhofer~\cite{CH24}.
If one can obtain corresponding classification results for uniformly three-convex ancient solutions in~$\bb R^{n+1}$ (see 
\cite{Z21, DZ22, CDZ25} for partial progresses), then our rigidity result will also lead to the corresponding classification results for uniformly $\min\set{\frac 1{n-3},c_n}$-pinched flows.

Next, we can use Corollary~\ref{cor:complete-bdd-curv-pinched} and Theorem~\ref{thm:improved-conv} to generalize previous rigidity theorems for spheres and cylinders. 
As an example, we can generalize \cite[Theorem~4.2]{LN21} to the non-compact case, or generalize a rigidity result \cite[Theorem~1.1]{L22} to the high codimension case.
We can even relax the pinching condition and allow the possibly largest pinching constant $c_n.$
We note that the constant $c_n$ in \cite{LN21}, which is $\min \set{\frac{4}{3n}, \frac{1}{n-1}}$, is smaller than the one considered in this note. 

\begin{cor}
\label{cor:type-I-pinched-generalized-cylinder}
	Let $M_t\sbst  \bb R^N$ be an ancient smooth family of $n$-dimensional, complete, immersed submanifolds evolving by MCF. 
	Suppose $M_t$ is uniformly $c_n$-pinched.
	If $M_t$ is of type-I, in the sense that $|A|\le C/\sqrt{-t}$ on $M^n \times (-\infty, 0)$ for some $C<\infty,$ then the flow is $\mathbb{S}^{n-k}\pr{\sqrt{-2(n-k)t}} \times \mathbb{R}^{k}$ for some $k=0,\cdots,n.$
\end{cor}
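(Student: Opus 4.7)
The plan is to combine the two main estimates of this paper with a classification of type-I ancient mean convex flows. First, I would verify the hypotheses of Corollary~\ref{cor:complete-bdd-curv-pinched}: the type-I bound $|A|\leq C/\sqrt{-t}$ gives bounded curvature on each time slice (hence polynomial growth), and also controls the entropy via the parabolic rescaling $\widetilde M_\tau := e^{-\tau/2} M_{-e^{-\tau}}$. Indeed, type-I translates to $|\widetilde A|\leq C$ uniformly in $\tau$, and since Huisken's Gaussian area is invariant under this rescaling and uniformly controlled by the curvature bound on $\widetilde M_\tau$, the entropy of $M_t$ is finite. Corollary~\ref{cor:complete-bdd-curv-pinched} then yields that $M_t\subset\mathbb{R}^{n+1}$ after a rotation.

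Next, I would promote $M_t$ to a convex non-collapsed flow. Since $c_n<1$, pinching forces $|A|=0$ wherever $|H|=0$; together with a strong-maximum-principle argument applied to the scalar evolution of $|H|^2$ on a connected ancient flow, this forces either $|H|>0$ throughout $M\times(-\infty,0)$ or that $M_t$ is a static plane (the case $k=n$). In the former case $M_t$ is mean convex after orienting by $\nu=\mathbf{H}/|H|$, so Theorem~\ref{thm:improved-conv} yields convexity; and since uniform $c_n$-pinching implies uniform $1$-pinching, Corollary~\ref{cor:convex-then-noncollapsing} then yields non-collapsing.

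At this point $\widetilde M_\tau$ is a family of convex, non-collapsed, $c_n$-pinched hypersurfaces with uniformly bounded curvature. Passing to a subsequential smooth limit as $\tau\to-\infty$ yields, via Huisken's monotonicity, a self-shrinker $\Sigma$ inheriting these properties, which by the classical classification of mean convex self-shrinkers in codimension one must equal $\mathbb{S}^{n-k}(\sqrt{2(n-k)})\times\mathbb{R}^k$ for some $k$. To identify $M_t$ itself with the corresponding shrinking cylinder, one can either invoke the equality case of Huisken's monotonicity together with the scale invariance of the type-I class---which forces the Gaussian density of $M_t$ to be constant in $t$ and hence $M_t$ to satisfy the shrinker equation---or apply the classification of non-collapsed convex ancient MCFs (cf.\ \cite{BC19, BC21, ADS20}) and use that translating bowls and ancient ovals are excluded by type-I, since their curvature is uniformly bounded below as $t\to-\infty$. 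This last rigidity step is the main obstacle, and the type-I hypothesis is essential precisely here, providing the scale invariance needed to rule out non-self-similar ancient flows whose blow-down is the given shrinking cylinder.
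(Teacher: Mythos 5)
Your first two steps match the paper's approach: apply the planarity estimate (Corollary~\ref{cor:complete-bdd-curv-pinched}) to reduce to codimension one, then Theorem~\ref{thm:improved-conv} to obtain convexity. The genuine divergence is at the final step. The paper simply invokes Lynch's classification of convex type-I ancient hypersurface flows, \cite[Theorem~1.1]{L22}, to conclude; you instead attempt to reprove this rigidity via a blow-down argument, and both alternatives you offer have gaps.

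Route (a) asserts that type-I ``forces the Gaussian density of $M_t$ to be constant in $t$.'' This is circular: constancy of the Gaussian density is, by the equality case in Huisken's monotonicity formula, precisely the self-shrinker condition, and deducing it from a type-I curvature bound alone is exactly the nontrivial rigidity one needs. The passage from ``the blow-down is a shrinker'' to ``the flow \emph{is} that shrinker'' is not automatic and is the content of Lynch's theorem. Route (b) appeals to \cite{BC19, BC21, ADS20}, but those classification theorems concern uniformly \emph{two-convex} non-collapsed ancient flows; $c_n$-pinching permits $k$-convex flows with $k \geq 3$ (e.g.\ $\mathbb{S}^{n-2}\times\mathbb{R}^2$ when $n\geq 8$), which lie outside the scope of those results, and the higher-$k$ classification is not available in general. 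Citing \cite[Theorem~1.1]{L22} at this stage, as the paper does, avoids both issues at once. A minor point: your verification of the finite-entropy hypothesis via ``Gaussian area is uniformly controlled by the curvature bound'' is not justified as written, since a bounded second fundamental form alone does not bound weighted area for a general complete immersion; this hypothesis deserves more care, though the paper itself also does not expand on it.
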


Indeed, by Corollary ~\ref{cor:complete-bdd-curv-pinched} and Theorem~\ref{thm:improved-conv}, for $M_t$ as above we conclude $M_t$ is codimension one and convex. Hence, by \cite[Theorem~1.1]{L22}, the corollary follows. 

The pinching constant $c_n$ is due to the planarity estimate.
It can be weakened to an arbitrary pinching condition based on Theorem~\ref{thm:improved-conv} in the hypersurface case.
As mentioned in Corollary~\ref{cor:BC-ADS-HS-LN}, in the low dimensional case, the constant can be weakened and the resulting flow can only be the compact shrinking sphere.
Other generalizations can be done based on \cite[Corollaries~1.5 and 1.6]{L17}.

Finally, the main theorems also lead to a new classification result of high codimensional shrinkers.
Based on Theorem~\ref{thm:BHS-type-est-int-version}, a uniformly $c_n$-pinched shrinker with bounded entropy must be codimension one, and based on Theorem~\ref{thm:improved-conv}, it must be convex.
We can then argue that such a shrinker must be a generalized cylinder. 

\begin{cor}\label{cor:cyl-shrinker}
    Let $n\ge 2.$
    A uniformly $c_n$-pinched shrinker with bounded entropy must be a generalized cylinder $\mathbb{S}^{n-k}\pr{\sqrt{2(n-k)}} \times \mathbb{R}^{k}$  for some $k=0,\cdots,n.$
\end{cor}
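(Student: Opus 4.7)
The plan is to combine Theorem~\ref{thm:BHS-type-est-int-version} and Theorem~\ref{thm:improved-conv} with the classical classification of mean convex self-shrinkers. Let $\Sigma$ be the given uniformly $c_n$-pinched shrinker with bounded entropy, and consider the associated self-similar MCF $M_t = \sqrt{-t}\,\Sigma$ for $t \in (-\infty, 0)$. The pinching ratio $|A|^2/|H|^2$ and Huisken's entropy are both invariant under parabolic rescaling, so $M_t$ is an ancient uniformly $c_n$-pinched MCF with finite entropy. Moreover, the shrinker identity $H = -\frac{1}{2} F^\perp$ yields $|H| \le \frac{1}{2} |F|$ pointwise on $\Sigma$, and combined with the pinching this gives $|A| \le C|F|$; in particular, each slice $M_t$ has at most polynomial curvature growth.

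With these hypotheses in place, the next step is to invoke Corollary~\ref{cor:complete-bdd-curv-pinched}, concluding that $M_t$ is codimension one for all $t$. Since the shrinker equation forces $\Sigma$ to lie in a linear (i.e.\ origin-containing) subspace of $\mathbb{R}^N$, we may view $\Sigma$ as a codimension one self-shrinker in $\mathbb{R}^{n+1}$. The pinching $c_n \le 1$ forces $|A|$ and $|H|$ to vanish simultaneously, so on each connected component where $\Sigma$ is not totally geodesic, the scalar mean curvature has a definite sign; a totally geodesic component must be a hyperplane through the origin, which is already the $k = n$ case. Thus $M_t$ is mean convex, and Theorem~\ref{thm:improved-conv} (applied with any $L$ larger than $c_n$) yields that $M_t$ is convex.

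Finally, $\Sigma$ is a complete, mean convex, codimension one self-shrinker with bounded entropy, hence of polynomial volume growth. By the classical classification of convex self-shrinkers due to Huisken and Colding--Minicozzi, $\Sigma$ must be one of the generalized cylinders $\mathbb{S}^{n-k}(\sqrt{2(n-k)}) \times \mathbb{R}^k$ for some $k \in \{0, 1, \ldots, n\}$, which is the desired conclusion.

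The main obstacle in carrying out this plan is the verification of mean convexity in the potentially immersed and disconnected setting, together with the justification that polynomial curvature growth indeed follows from the shrinker equation and the pinching; these should, however, be routine consequences of standard self-shrinker theory. A secondary subtlety is that the classification we invoke is usually stated for embedded hypersurfaces, so a small argument (using that convex complete immersions are embedded) is needed to reduce to that case.
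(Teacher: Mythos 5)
Your overall structure — deduce codimension one from Corollary~\ref{cor:complete-bdd-curv-pinched}, deduce convexity from Theorem~\ref{thm:improved-conv}, then appeal to the classification of mean convex self-shrinkers — matches the paper's proof. The initial reductions (scale-invariance of pinching and entropy, polynomial growth of $|A|$ from the shrinker identity, placing $\Sigma$ in a linear codimension-one subspace, mean convexity from $c_n \le 1$) are essentially the same as what the paper does.

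The gap is in the final classification step. You propose a ``small argument (using that convex complete immersions are embedded)'' to reduce to the embedded case needed for Colding--Minicozzi's theorem, but this claim is false as stated: the split shrinker $\mathbb{R}^{n-1}\times\gamma$ where $\gamma$ is an Abresch--Langer curve is a complete, convex, mean-convex, immersed self-shrinker with polynomial volume growth that is not embedded, and is not a generalized cylinder. The embeddedness statement you want is Sacksteder's theorem, which requires dimension $\ge 2$ and strict convexity at some point; it therefore cannot be applied to the whole of $\Sigma$ (which is only weakly convex), and it cannot be applied to the one-dimensional cross-section at all. The paper navigates this by first applying the strong maximum principle to split $\Sigma = \widetilde{\Sigma}^{k}\times\mathbb{R}^{n-k}$ with $\widetilde{\Sigma}^{k}$ strictly convex, then ruling out $k=1$ directly by the pinching hypothesis (for a cylinder over a curve one has $|A|=|H|$, contradicting $|A|^2 \le c_n|H|^2 < |H|^2$), and only for $k\ge 2$ invoking Sacksteder to get embeddedness and then \cite[Theorem~0.17]{CM12}. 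Your proposal needs to incorporate both the de Rham splitting via strong maximum principle and the explicit exclusion of the curve-factor case via pinching; without them, the ``small argument'' does not close.
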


We give some remarks about the low dimensional case.
When $n=1,$ all shrinkers are classified, and they are the straight line and the Abresch--Langer curves \cite{AL}.
When $n\in\set{3,4},$ since 
$$c_n= \frac{3(n+1)}{2n(n+2)}
<\frac 4{3n}
<\frac 1{n-1}
<\frac 1{n-2},$$ 
the only non-planar shrinker allowed in Corollary~\ref{cor:cyl-shrinker} 
is the round sphere, based on the compactness criterion, Proposition~\ref{prop:n-1-pinched-compact},
which then allows us to put the best constant $\frac 4{3n}$ (instead of $c_n$) given by the work~\cite{LN21}.
In general, as mentioned above, a $k$-cylinder is allowed if $k\le n/4.$

In~\cite[Main Theorem~8]{B11}, Baker proved Corollary~\ref{cor:cyl-shrinker} under a bounded curvature assumption.
Corollary~\ref{cor:cyl-shrinker} then shares the same spirit as in \cite{CM12}, where Colding--Minicozzi generalized Huisken's classification of mean convex shrinkers by removing a bounded curvature assumption, and they also used integral curvature estimates to achieve this.
Here, we use the integral estimate to deal with shrinkers with possibly unbounded curvature, but thanks to the shrinker equation and the pinching condition, a shrinker automatically has curvature of polynomial growth.
This allows us to deal with its weighted integral without any further restriction.

We organize the paper as follows.
In Section~\ref{sec:Planarity}, we prove the improved pointwise and integral planarity estimates, Theorem~\ref{thm:BHS-type-est}
and Corollary~\ref{cor:complete-bdd-curv-pinched}.
In Section~\ref{sec:conv}, we prove the convexity estimate, Theorem~\ref{thm:improved-conv}.
In Section~\ref{sec:corollary}, we provide the proofs of Corollaries~\ref{cor:BC-ADS-HS-LN} and~\ref{cor:cyl-shrinker}.
In Section~\ref{sec:Question}, we list some open questions related to pinched MCF in higher codimension.

\subsection*{\bf Acknowledgment}
The authors are grateful to Stephen Lynch, Jonathan J. Zhu, and Bill Minicozzi for several helpful discussions. Part of this work was completed while KN was supported by the National Science Foundation under grant DMS-2103265. 

\section{\bf Planarity estimate}
\label{sec:Planarity}

\textit{On notation in Section \ref{sec:Planarity}:} In this section, all MCFs are, a priori, higher codimension and by convention we let $H = H^\alpha \nu_\alpha$ denote the vector-valued mean curvature and $A = A_{ij}^\alpha dx^i \otimes dx^j \otimes \nu_\alpha$ denote the vector-valued second fundamental form. When $|H| > 0$, we let $\nu_1 := |H|^{-1} H$ and express $A = h \nu_1 + \hat{A}$ as the orthogonal sum of the principal (codimension one) component $h := \langle A, \nu_1 \rangle$ and its complement $\hat{A}$. If we let $D$ denote the usual connection, then we let $\nabla := D^\top$ denote the induced on $TM$ and $T^\ast M$ and tensor bundles $T^\ast M^{\otimes p} \otimes TM^{\otimes q}$ made from these. We let $\nabla^\perp := D^\perp$ denote the connection on the normal bundle $T^\perp M$, and the induced connection on tensors taking values in the normal bundle $ T^\ast M^{\otimes p} \otimes TM^{\otimes q}\otimes (T^\perp M)^{\otimes k}$. We also let $\partial_t^\perp T:= (\partial_t T)^\perp$ for any evolving normal-vector-valued tensor $T$. These conventions are chosen to match those in \cite{Na22}. 

\subsection{Pointwise estimate}
\label{sec:pointwise-planarity}
Our proof of Theorem \ref{thm:BHS-type-est} is based on the estimates in \cite{Na22} and the techniques in \cite{BHS} and \cite{L24}. 
Before we start the proof, we give a remark about the evolution equations in common geometric flows.

\begin{remark}\label{rmk:MCF-evol}
In many cases in the MCF setting, maximum principle type arguments (that work, say, in Ricci flow) must be replaced by more subtle Stampacchia iterations arguments because the reaction terms in curvature PDEs in MCF are a bit less positive and hence exhibit weaker pinching  (cf. \cite{Ham82} and \cite{H84} for the classical instance of this phenomenon). 
In the present setting, however, the techniques of \cite{BHS} and \cite{L24} (which are maximum principle arguments in the Ricci flow setting) carry over without the introduction of a more complicated iteration scheme. 
This is because the PDE for $|\hat{A}|^2$, which is used here as the measure of planarity, is better behaved than other curvature quantities in MCF (such as the norm of the traceless second fundamental form $|\mathring{h}|^2$ in \cite{H84} or the first eigenvalue of the second fundamental form $\lambda_1$ in \cite{HS99}). 
When $|\hat{A}|^2$ is small, the reaction terms in the PDE for $|\hat{A}|^2$ are quadratically small (of order $\sim |\hat{A}|^4$), which distinguishes this equation from others in MCF. 
\end{remark}

Now suppose $M^n_t = F(M^n, t) \subset \mathbb{R}^N$ is a smooth, complete, connected, immersed MCF for $t \in [0, T]$ such that
\begin{itemize}
\item each time slice $M^n_t$ has bounded curvature, and 
\item  $|A|^2 \leq (c_n - \epsilon_0)|H|^2$ holds pointwise on $M \times [0, T]$. 
\end{itemize}
Define a constant $c_0:=c_n-\frac{1}{2}\epsilon_0$ if $n=5,6,7$ and $c_0:=c_n$ if $n\ge 8.$
Let  
\begin{equation}
f:=c_0|H|^2 - |A|^2.
\end{equation}
Then for any $n$, by assumption, 
\begin{equation}
f \geq \left(c_n-\frac{1}{2}\epsilon_0\right) |H|^2 - |A|^2 \ge \frac{1}{2}\epsilon_0 |H|^2 \ge 0.
\end{equation}
Let us first show that $|H| > 0$ on $M \times (0, T]$ unless $M^n_t$ is a flat, static copy of $\mathbb{R}^n$.

We first state a minor extension of \cite[Lemma 4.5]{Na22}.

\begin{lemma}\label{lem:evol-of-f}
The function $f$ evolves according to the equation 
\begin{equation}\label{eq:evol-of-f}
(\partial_t - \Delta) f = 2\left(c_0 |\langle A, H \rangle|^2 - |\langle A, A \rangle|^2 - |R^\perp|^2\right) + 2 (|\nabla^\perp A|^2 - c_0 |\nabla^\perp H|^2). 
\end{equation}
At any point $(p, t) \in M \times (0, T]$ where $|H|(p, t) > 0$, 
\begin{equation*}
(\partial_t -\Delta)f \geq 2\left( |h|^2 + \frac{1}{nc_0 -1} |\hat{A}|^2\right) f + 2 \left(1 - \frac{c_0(n+2)}{3}\right)|\nabla^\perp A|^2 \geq 0.
\end{equation*}
At any point $(p, t) \in M \times (0, T]$ where $|H|(p, t) = 0$, we have $|A|(p, t)=0$, hence
\begin{equation*}
(\partial_t-\Delta)f \geq 2 \left(1 - \frac{c_0(n+2)}{3}\right)|\nabla^\perp A|^2 \geq 0.
\end{equation*}
In particular, since by assumption $f \geq 0$ on $M \times [0, T]$, we have $(\partial_t -\Delta)f \geq 0$ on $M \times (0, T]$. 
\end{lemma}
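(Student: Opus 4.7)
The plan is to derive identity \eqref{eq:evol-of-f} by direct computation and then extract the two pointwise lower bounds from the algebraic and Kato-type inequalities that already underlie the planarity estimate of \cite{Na22}. First, I would combine the standard higher-codimension MCF evolution equations (due to Andrews--Baker)
\begin{align*}
    (\partial_t - \Delta)|H|^2 &= 2|\langle A, H\rangle|^2 - 2|\nabla^\perp H|^2, \\
    (\partial_t - \Delta)|A|^2 &= 2|\langle A, A\rangle|^2 + 2|R^\perp|^2 - 2|\nabla^\perp A|^2,
\end{align*}
by taking $c_0$ times the first and subtracting the second; this produces \eqref{eq:evol-of-f} immediately.

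The two pointwise bounds amount to splitting the right-hand side of \eqref{eq:evol-of-f} into its reaction and gradient parts. For the reaction part at a point where $|H| > 0$, I would decompose $A = h\nu_1 + \hat A$, which gives $\langle A, H\rangle = |H|\,h$ as a symmetric $(0,2)$-tensor and hence $|\langle A, H\rangle|^2 = |H|^2|h|^2$. The algebraic heart of \cite[Sec.~4]{Na22}, valid for any admissible $c \le c_n$, asserts that
\begin{equation*}
    c\,|\langle A, H\rangle|^2 - |\langle A, A\rangle|^2 - |R^\perp|^2 \;\geq\; \Big(|h|^2 + \tfrac{1}{nc - 1}|\hat A|^2\Big)\big(c|H|^2 - |A|^2\big).
\end{equation*}
Applying this with $c = c_0$ yields the claimed lower bound on the reaction term.

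For the gradient part, I would invoke the standard Kato-type inequality $|\nabla^\perp H|^2 \le \tfrac{n+2}{3}|\nabla^\perp A|^2$ for the Codazzi normal-bundle-valued tensor $A$, which gives
\begin{equation*}
    |\nabla^\perp A|^2 - c_0|\nabla^\perp H|^2 \;\geq\; \Big(1 - \tfrac{c_0(n+2)}{3}\Big)|\nabla^\perp A|^2.
\end{equation*}
A direct check shows $c_n(n+2)/3 \le 1$ in both defining ranges of $c_n$, so this coefficient is nonnegative. At a point where $|H|(p, t) = 0$, the uniform pinching $|A|^2 \le (c_n - \epsilon_0)|H|^2$ forces $|A|(p, t) = 0$ as well, so the reaction term vanishes identically and only the gradient contribution remains, again controlled by the same Kato-type inequality. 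The conclusion $(\partial_t - \Delta)f \ge 0$ on $M \times (0, T]$ then follows by combining these nonnegative lower bounds with $f \ge 0$, which is itself a consequence of the uniform pinching and the choice of $c_0$.

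The principal obstacle lies entirely on the side of \cite{Na22}: the sharp algebraic pointwise inequality with the constant $c_n$ is the main technical content of that paper, and establishing it from scratch would be nontrivial. Extending it from $c_n$ to the marginally smaller $c_0$ is, by contrast, just monotonicity in $c$, which is why the present lemma deserves the label ``minor extension'' of \cite[Lem.~4.5]{Na22}.
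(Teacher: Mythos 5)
Your proposal is correct and follows essentially the same route as the paper: both derive \eqref{eq:evol-of-f} by combining the Andrews--Baker evolution equations, both invoke the algebraic reaction-term inequality and the Kato-type gradient inequality that together constitute \cite[Lemma~4.5]{Na22} for the $|H|>0$ case, and both use the pinching to force $|A|=0$ when $|H|=0$ so that only the gradient contribution remains. The only difference is that you unpack the two underlying inequalities explicitly, while the paper cites \cite[Lemma~4.5]{Na22} wholesale and separately cites \cite[(4.18)]{Na22} for the degenerate case.
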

\begin{proof}
Equation \eqref{eq:evol-of-f} is a direct consequence of combining the evolution equations of $|A|^2$ and $|H|^2$, which may be found in Proposition 2.3 of \cite{Na22}. In Lemma 4.5 of \cite{Na22}, it is shown\footnote{The lemma in \cite{Na22} is mistakenly missing a factor of $2$ on the gradient term, but this only makes the inequality stronger. Additionally, in the setting of \cite{Na22}, although it is not written, it is implicitly assumed in Lemma 4.5 that $|H| > 0$. In the present setting, we consider the possibility that $|H|= 0$, which is why we restate the lemma.} that if $|H| > 0$ and $f \geq 0$, then
\[
(\partial_t - \Delta) f \geq 2 |h|^2 f + \frac{2}{nc_0 -1} |\hat{A}|^2 f +2\left(1 - \frac{c_0(n+2)}{3}\right)|\nabla^\perp A|^2 \geq 0.
\]
On the other hand, when $|H| = 0$, by assumption we have $f = -|A|^2 \geq 0$, which is only possible if $A \equiv 0$. It follows then (see (4.18) in \cite{Na22}) that 
\[
(\partial_t - \Delta) f = 2(|\nabla^\perp A|^2 - c_0 |\nabla^\perp H|^2) \geq 2\left(1 - \frac{c_0(n+2)}{3}\right)|\nabla^\perp A|^2  \geq 0.
\]
In either case, we find that under the assumption $f \geq 0$ on $M \times[0, T]$, we have $(\partial_t - \Delta) f \geq 0$ on $M \times (0, T]$. 
\end{proof}

As a corollary to the above lemma, since $(\partial_t - \Delta) f \geq 0$ and $f \geq 0$, if $f(p_0, t_0) = 0$ for some $(p_0, t_0) \in M \times (0, T]$, then by the strong maximum principle $f \equiv 0$ on $M \times [0, t_0]$. If this holds, then $A \equiv 0 $ on $M \times [0, t_0]$ and so $M \times [0, T]$ is a static copy of $\mathbb{R}^n$ in $\mathbb{R}^N$. Otherwise, we have $f > 0$ on $M \times (0, T]$ and hence $|H| > 0$ on $M \times (0, T]$, which we assume from now on. This justifies existence of a principal normal $\nu_1 := |H|^{-1} H$ on $M \times (0, T]$ and hence the decomposition $A = h \nu_1 + \hat{A}$ in ensuing computations. 

\begin{proof}
[Proof of Theorem \ref{thm:BHS-type-est}]
    As mentioned, it remains to consider the case when $f>0$ on $M\times (0, T].$ 
    If we take $\delta > 0$ small enough, depending upon $n$ and $\epsilon_0$, then Lemmas 4.4 and 4.9 in \cite{Na22} imply
    \begin{align}\label{Na22-435}
    \frac{1}{f}(\partial_t -\Delta)|\hat{A}|^2  - \frac{|\hat{A}|^2}{f^2}(\partial_t - \Delta)f \leq - \delta \frac{|\hat{A}|^2}{f^2} (\partial_t - \Delta)f.
    \end{align}
    Specifically,\footnote{Note that our definition of $\epsilon_0$ in this note differs from the one in \cite{Na22} by a factor of $2$.} this holds if we take $\delta := \min \{\frac{1}{2}, \frac{n(n+2)}{3(n-1)}\epsilon_0 \}$ in dimensions  $n \in \{5, 6, 7\}$ and $\delta := \frac{1}{5n-8}$ for $n \geq 8$.
    Direct calculations give
    \begin{align*}
    (\bd_t-\D) f^{1-\sigma}
    = (1-\sigma) \frac 1{f^\sigma} (\bd_t-\D)f
    + \sigma(1-\sigma) \frac{1}{f^{1+\sigma}} |\n f|^2,
    \end{align*}
    which implies
    \begin{align*}
    (\bd_t-\D)\frac{|\hat A|^2}{f^{1-\sigma}}
    & = \frac{1}{f^{1-\sigma}} (\bd_t-\D) |\hat A|^2
    - \frac{|\hat A|^2}{f^{2-2\sigma}} (\bd_t-\D)f^{1-\sigma}
    + 2\ppair{\n \frac{|\hat A|^2}{f^{1-\sigma}}, \n \log f^{1-\sigma}}\\
    & = \frac{1}{f^{1-\sigma}} (\bd_t-\D) |\hat A|^2
    + 2\ppair{\n \frac{|\hat A|^2}{f^{1-\sigma}}, \n \log f^{1-\sigma}}\\
    &\quad - \frac{|\hat A|^2}{f^{2-2\sigma}}
    \pr{
    (1-\sigma) \frac 1{f^\sigma} (\bd_t-\D)f
    + \sigma(1-\sigma) \frac{1}{f^{1+\sigma}} |\n f|^2
    }\\
    & = f^\sigma\pr{
    \frac 1f (\bd_t-\D) |\hat A|^2
    - \frac{|\hat A|^2}{f^2}(\bd_t-\D)f
    }
    + \sigma \frac{|\hat A|^2}{f^{2-\sigma}} (\bd_t-\D)f\\
    &\quad 
    + 2\ppair{\n \frac{|\hat A|^2}{f^{1-\sigma}}, \n \log f^{1-\sigma}}
    - \sigma(1-\sigma) \frac{|\hat A|^2}{f^{3-\sigma}} |\n f|^2.
    \end{align*}
    Thus, using \eqref{Na22-435} above, if we let 
    $$u:= \frac{|\hat A|^2}{f^{1-\sigma}},$$ 
    then we have
    \begin{align*}
    (\bd_t-\D) u
    \le (\sigma-\delta) \frac uf (\bd_t-\D) f
    + 2\pair{\n u, \n \log f^{1-\sigma}}
    - \sigma(1-\sigma)u\frac{|\n f|^2}{f^2}.
    \end{align*}
    In \cite{Na22}, we took $\sigma$ equal to $\delta$ and threw away the last gradient term to obtain the original planarity estimate. If we instead take $\sigma = \sigma(n, \epsilon_0) :=\delta/2$ (following the idea in \cite{BHS}), we get
    \begin{align}\label{u-evol-1}
    (\bd_t-\D) u
    \le -\sigma \frac uf (\bd_t-\D) f
    + 2\pair{\n u, \n \log f^{1-\sigma}}
    - \sigma(1-\sigma)u\frac{|\n f|^2}{f^2}.
    \end{align}
    Now observe that 
    \[
    |\hat{A}|^2  \leq |A|^2 \leq c_0 |H|^2 \leq 2c_0 \epsilon_0^{-1} f, 
    \]
    which implies $u \leq 2 c_0 \epsilon_0^{-1}f^{\sigma}$. 
    Since $f>0,$ Lemma \ref{lem:evol-of-f} above therefore implies 
    \begin{align*}
    \frac 1f (\bd_t-\D) f
    \ge \frac 2{nc_0-1}|\hat A|^2
    = \frac 2{nc_0-1} \frac{|\hat{A}|^2}{f^{1-\sigma}} (f^\sigma)^{\frac{1}{\sigma}-1} \ge \frac 2{nc_0-1} u\cdot \pr{\frac{\epsilon_0}{2c_0}u}^{\frac 1\sigma-1}.
    \end{align*}
    Taking $C_0 = C_0(n, \epsilon_0):= \sigma^{-1}\frac{nc_0-1}{2}(\frac{\epsilon_0}{2c_0})^{1-\frac{1}{\sigma}}$, this can be written as $\sigma\frac{1}{f} (\partial_t -\Delta) f \geq C_0^{-1} u^{\frac{1}{\sigma}}$. 
    Thus, \eqref{u-evol-1} becomes 
    \begin{align}\label{u-evol-2}
    (\bd_t-\D) u
    & \leq  
    2\pair{\n u, \n \log f^{1-\sigma}} 
    - C_0^{-1} u^{1+\frac 1\sigma}
    - \sigma(1-\sigma)u\frac{|\n f|^2}{f^2}.
    \end{align}

    Finally, following an idea in \cite{L24}, we estimate
    \begin{align*}
    2\pair{\n u, \n \log f^{1-\sigma}}
    \le (1-\sigma) \pr{
    \frac{|\n u|^2}{\sigma u}
    + \sigma u \frac{|\n f|^2}{f^2}
    },
    \end{align*}
    so \eqref{u-evol-2} implies
    \begin{align*}
    (\bd_t-\D) u
    & \le (1-\sigma) \pr{
    	\frac{|\n u|^2}{\sigma u}
    	+ \sigma u \frac{|\n f|^2}{f^2}
    }
    - C_0^{-1} u^{1+\frac 1\sigma}
    - \sigma(1-\sigma)u\frac{|\n f|^2}{f^2}\\
    & = \frac{1-\sigma}{\sigma} \frac{|\n u|^2}{u}
    - C_0^{-1} u^{1+\frac 1\sigma}.
    \end{align*}
    This is equivalent to
    \begin{align}\label{evol-of-u}
    (\bd_t-\D)u^{\frac{1}{\sigma}}
    & = \frac 1\sigma u^{\frac 1\sigma - 1}(\bd_t-\D) u
    - \frac 1\sigma\pr{\frac 1\sigma-1} u^{\frac 1\sigma-2} |\n u|^2\\
    & = \frac{1}{\sigma} u^{\frac{1}{\sigma}-1}\left((\partial_t - \Delta)u - \frac{1-\sigma}{\sigma} \frac{|\nabla u|^2}{u}\right)\nonumber\\
    & \leq -C_0^{-1} u^{\frac 2\sigma}.\nonumber
    \end{align}
    Since we assume that each $M_t$ has bounded curvature, we have that $B := \sup_{M \times [0, T]} u < \infty$. It follows that $t u^{\frac{1}{\sigma}}-C_0 \leq 0$ holds on $M \times [0, \tau]$ for $\tau = \frac{1}{2}C_0 B^{-\frac{1}{\sigma}}$. On the other hand, \eqref{evol-of-u} implies 
    \begin{align*}
    (\partial_t - \Delta) (tu^{\frac{1}{\sigma}}-C_0) \leq u^{\frac{1}{\sigma} } - t C_0^{-1} u^{\frac{2}{\sigma}} = -\frac{1}{C_0}(u^{\frac{1}{\sigma}})\left((tu^{\frac{1}{\sigma}})-C_0\right) \leq -\frac{B^{\frac{1}{\sigma}}}{C_0} \left(C_0 - t u^{\frac{1}{\sigma}}\right),
    \end{align*}
    and consequently $(\partial_t - \Delta) (e^{\tilde{B}t} (tu^{\frac{1}{\sigma}} - C_0)) \leq 0$ for $\tilde{B} = B^{\frac{1}{\sigma}}/C_0$. 
    The estimate $t u^{\frac{1}{\sigma}} \leq C_0$ on $M \times [0, T]$ now follows from the maximum principle for complete evolving manifolds with bounded curvature. See for instance Chapter 12 of \cite{CCG+08}. The estimate for $u$ implies $|\hat{A}|^2 \leq C_0^\sigma f^{1-\sigma}t^{-\sigma}$ on $M \times (0, T]$. Since $f \leq c_0 |H|^2$, this completes the proof of Theorem \ref{thm:BHS-type-est}.
\end{proof}

\subsection{\bf Integral estimate}
\label{sec:integral-form}

Using the estimate developed above, we can obtain an integral version of the planarity estimate.
It allows a larger class of ancient solutions and similarly implies rigidity results.
Suppose $F\colon M\times [0,T]\to \bb R^N$ is a complete solution of MCF.
On $M,$ we let $dV_t$ be the volume form of the metric $g_t$ induced from the Euclidean metric by the immersion $F(\cdot, t).$
In the following, we suppose that for every $t \in [0, T]$,
\begin{equation}\label{eq:volume-bounds}
\int_M \Phi_T \, dV_t \leq \Lambda,
\end{equation}
and 
\begin{equation}\label{eq:curvature-int-bounds}
    \int_M \pr{|A|^2 + t|\nabla^\perp A|^2 + t^2|(\nabla^\perp)^2 A|^2 + t^2|\partial_t^\perp A|^2 }\Phi_T \,dV_t <\infty,
\end{equation}
where $\Phi_T(x, t) := (4\pi(T-t))^{-\frac{n}{2}} \exp\pr{-\frac{|x|^2}{4(T-t)}}$.  Note that the high decay rate of the Gaussian a priori allows considerable growth of the curvature. However, we also now need to assume weighted area bounds as in \eqref{eq:volume-bounds}. Since $\int_M \Phi_T \, dV_t \leq \sup_{t \in [0, T]}\lambda(M_t)$, a uniform entropy bound suffices. 

\begin{thm}\label{thm:BHS-type-est-int-version}
	Suppose $n \geq 5$ and $N > n + 1$.
	Let $M^n_t = F(M^n, t) \subset \mathbb{R}^N$, $t\in[0,T]$, be a smooth family of $n$-dimensional, complete, connected, immersed submanifolds evolving by MCF. Suppose both \eqref{eq:volume-bounds} holds with constant $\Lambda > 0$ and  \eqref{eq:curvature-int-bounds} holds, for all $t \in [0, T]$. Finally, suppose there exists a constant $\epsilon_0 \in (0, c_n)$ such that $|A|^2 \leq (c_n-\epsilon_0)|H|^2$ holds on $M^n \times [0, T]$. Then either $M^n_t$ is a static, flat copy of $\mathbb{R}^n$ in $\mathbb{R}^N$, or else $|H| > 0$ on $M \times (0, T]$ and there exist constants $C=C(n,\epsilon_0)$ and $\sigma = \sigma(n,\epsilon_0)$ such that
	\begin{equation}\label{eq:int-estimate}
	\int_{M} \pr{\frac{|\hat A|^2}{f^{1-\sigma}}}^{1/\sigma} \Phi_T dV_t
	\le \frac{C\Lambda}{t}.
	\end{equation}
	holds for all $t\in (0, T]$. 
\end{thm}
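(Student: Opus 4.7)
The plan is to translate the pointwise reaction-type inequality derived in the proof of Theorem~\ref{thm:BHS-type-est} into a weighted integral inequality, and then close the argument by a Cauchy--Schwarz step and an ODE comparison. Setting $u := |\hat A|^2/f^{1-\sigma}$ and $v := u^{1/\sigma}$, the calculation carried out just before the maximum principle step in Section~\ref{sec:pointwise-planarity} yields
\begin{equation*}
(\partial_t - \Delta) v \leq -C_1^{-1}\, v^2 \qquad \text{on } M \times (0, T],
\end{equation*}
for a constant $C_1 = C_1(n, \epsilon_0)$. As in the pointwise proof, the case $f \equiv 0$ corresponds to the static flat plane via the strong maximum principle for $f$, so it may be discarded; we may therefore assume $v$ is smooth on $M \times (0, T]$.

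Next, I would pair this pointwise inequality with Huisken's weighted monotonicity identity
\begin{equation*}
\frac{d}{dt} \int_M \psi\, \Phi_T\, dV_t = \int_M (\partial_t - \Delta)\psi\,\Phi_T\, dV_t - \int_M \psi \left|H + \frac{F^\perp}{2(T-t)}\right|^2 \Phi_T\, dV_t,
\end{equation*}
valid for $\psi \geq 0$ of sufficient decay. Taking $\psi = v$ and dropping the nonpositive shrinker-defect term yields
\begin{equation*}
I'(t) \leq -C_1^{-1} \int_M v^2\,\Phi_T\, dV_t, \qquad I(t) := \int_M v\,\Phi_T\, dV_t.
\end{equation*}
By weighted Cauchy--Schwarz and \eqref{eq:volume-bounds},
\begin{equation*}
I(t)^2 \leq \left(\int_M v^2\,\Phi_T\, dV_t\right)\left(\int_M \Phi_T\, dV_t\right) \leq \Lambda \int_M v^2\,\Phi_T\, dV_t,
\end{equation*}
so $I'(t) \leq -(C_1\Lambda)^{-1} I(t)^2$, equivalently $(1/I)'(t) \geq (C_1\Lambda)^{-1}$. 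Integrating from $s \in (0,t)$ to $t$ and letting $s \to 0^+$ gives $I(t) \leq C_1\Lambda/t$, which is exactly \eqref{eq:int-estimate} with $C := C_1$.

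The main technical hurdle will be justifying the weighted monotonicity identity in the noncompact setting under only the integrability hypothesis \eqref{eq:curvature-int-bounds}: one must verify that $I(t)$ is finite and differentiable in $t$, and that the Stokes boundary terms from integrating $\Delta v$ by parts against $\Phi_T$ vanish. The pointwise bound $u \leq 2c_0\epsilon_0^{-1}\, f^\sigma$ from the pointwise proof gives $v \leq C|A|^2$, with analogous polynomial-in-curvature control of $\nabla v$, $\nabla^2 v$ and $\partial_t v$ in terms of $A$, $\nabla^\perp A$, $(\nabla^\perp)^2 A$, and $\partial_t^\perp A$. Combined with the Gaussian decay of $\Phi_T$ and~\eqref{eq:curvature-int-bounds}, this gives the cutoff-free integration-by-parts one needs. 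If $\int v^2\,\Phi_T\, dV_t$ happens to be infinite at small $t$, one can first truncate by replacing $v$ with $\min(v, M)$, run the ODE argument for the truncation, and then pass $M \to \infty$ by monotone convergence.
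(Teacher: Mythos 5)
Your proposal takes essentially the same route as the paper: both apply Ecker's weighted monotonicity formula to a power of the pinching ratio, use the weighted area bound \eqref{eq:volume-bounds} via Cauchy--Schwarz/H\"older, and close with an ODE comparison. The only cosmetic difference is that you work with $v = u^{1/\sigma}$ while the paper uses the scale-invariant variable $\tilde u = t\,u^{1/\sigma}$, and the resulting ODEs $I' \le -I^2/(C_1\Lambda)$ and $F' \le \frac{F}{t}(1-F)$ are equivalent under the substitution $F = tI/(C_1\Lambda)$.

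There is, however, one genuine gap in the justification, and your proposed patch for it does not work. The monotonicity formula and the Cauchy--Schwarz step both require $\int_M v^2\,\Phi_T\,dV_t < \infty$ for each $t \in (0,T]$. Your pointwise bound $v \le C|A|^2$ only yields $v^2 \le C|A|^4$, and hypothesis \eqref{eq:curvature-int-bounds} controls the weighted $L^1$-norm of $|A|^2$, not of $|A|^4$, so this does not give the required integrability. Your fallback of truncating to $\min(v,M)$ and passing $M \to \infty$ fails because the truncated function does not satisfy the supersolution inequality: on $\{v > M\}$ the truncation is locally constant, so one cannot conclude $(\partial_t-\Delta)\min(v,M) \le -C_1^{-1} M^2$ there, which is precisely the reaction term the ODE argument needs. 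The paper closes this gap by exploiting the PDE itself: from $(\partial_t - \Delta)\tilde u \le \frac{1}{t}\tilde u - \frac{1}{C_0 t}\tilde u^2$ one obtains the pointwise bound $\tilde u^2 \le C_0\tilde u - C_0 t(\partial_t-\Delta)\tilde u$, and Cauchy's inequality then gives $\tfrac12\tilde u^2 \le \tfrac12 C_0^2 + C_0 T(|\partial_t\tilde u| + |\Delta\tilde u|)$, whose right-hand side is weighted-integrable by \eqref{eq:curvature-int-bounds}. You should insert this self-improving estimate (or its analogue for $v$ at fixed $t > 0$) to establish $\int_M v^2\,\Phi_T\,dV_t < \infty$ before invoking the monotonicity formula; with that inserted, your argument is complete.
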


\begin{proof}
As before, the strong maximum principle implies that if the flow is not static and flat, then we may assume $|H| > 0$ on $M \times (0, T]$. Proceeding under this assumption, let us define $c_0, f$, and $u:= |\hat A|^2/f^{1-\sigma}$ as in the proof of the pointwise estimate in Section~\ref{sec:pointwise-planarity}. 
Let us additionally take $C_0 := C_0(n, \epsilon_0)$ and $\sigma := \sigma(n, \epsilon_0)$ as chosen there. 
Consider the scale-invariant function $\tilde{u} := tu^{\frac{1}{\sigma}}$. Let us first verify integrability. 
Note that
\begin{align*}
  |\partial_t \tilde{u}|+   |\nabla^2 \tilde{u}| & \leq C\left( |A|^2 + t |\nabla^\perp A|^2 + t^2 |(\nabla^\perp)^2 A|^2 + t^2 |\partial_t^\perp A|^2\right)
\end{align*}
so that, by assumption \eqref{eq:curvature-int-bounds}, the quantity $(\partial_t - \Delta)\tilde{u}$ is integrable with respect to the Gaussian weight. From \eqref{evol-of-u} above, we have that 
\begin{equation}\label{eq:tilde-u}
(\partial_t -\Delta)\tilde{u} \leq \frac{1}{t} \tilde{u} - \frac{1}{C_0t} \tilde{u}^2.
\end{equation}
In particular, $\tilde{u}^2 \leq C_0 \tilde{u} - C_0 t(\partial_t -\Delta)\tilde{u} \leq \frac{1}{2} \tilde{u}^2 + \frac{1}{2} C_0^2 + C_0 T (|\partial_t \tilde{u}| + |\Delta \tilde{u}|)$ by Cauchy's inequality and the fact that $t \leq T$. This shows that $\int_M \tilde{u}^2 \, \Phi_T \, dV_t < \infty$, so that both $\tilde{u}^2$ and $\tilde{u}$ (by application of H\"older) are integrable with repsect to the Guassian weight as well. 

Based on \eqref{eq:tilde-u}, Ecker's local monotonicity formula \cite[Theorem~4.13]{E04}, and H\"older's inequality, we have 
\begin{align*}
\bd_t \int_{M} \tilde{u}\; \Phi_T dV_t
& = \int_{M} \pr{(\bd_t - \D) \tilde{u}
- \abs{H + \frac{x^\perp}{2(T-t)}}^2 \tilde{u}} \Phi_T dV_t\\
& \le \frac{1}{t} \int_M  \tilde{u} \;\Phi_T dV_t -\frac{1}{C_0t} \int_{M} \tilde{u}^2\; \Phi_T dV_t\\
& \le\frac{1}{t} \int_M  \tilde{u} \;\Phi_T dV_t -\frac{1}{C_0t}\pr{\int_{M} \Phi_T dV_t}^{-1} 
 \pr{\int_{M} \tilde{u} \;\Phi_T dV_t}^2.
\end{align*}
In view of the weighted area bounds $\eqref{eq:volume-bounds}$, we conclude
\begin{align*}
\bd_t \int_{M}\tilde{u} \;  \Phi_T dV_t
\le \frac{1}{t} \int_M  \tilde{u} \;\Phi_T dV_t -\frac{1}{C_0\Lambda t}
 \pr{\int_{M} \tilde{u} \;\Phi_T dV_t}^2.
\end{align*}
Letting $F(t):= \frac{1}{C_0\Lambda} \int_{M_t} \tilde{u}\, \Phi_T dV_t$, we have shown
\begin{align*}
F'(t) \le  \frac Ft \pr{1-F},
\end{align*}
so $F(t)\le 1$ (since $F(0) = 0$). 
That is, 
\begin{align*}
\int_{M} u^{\frac{1}{\sigma}} \,\Phi_T dV_t
\le \frac{C_0\Lambda}{t}.
\end{align*}
This finishes the proof.
\end{proof}

Theorem~\ref{thm:BHS-type-est-int-version} implies Corollary~\ref{cor:complete-bdd-curv-pinched}.
As mentioned, it also implies a new classification result for MCF shrinkers in any codimension, saying that a uniformly $c_n$-pinched shrinker with bounded entropy must be a generalized cylinder $S^{n-k}\pr{\sqrt{-2(n-k)}}\times \bb R^{k}.$
See Section~\ref{sec:corollary} for the proof of Corollary~\ref{cor:cyl-shrinker}.
As noted before, a generalized cylinder satisfies $|A|^2 = \frac{1}{n-k} |H|^2$ for $k \in \{0, \dots, n-1\}$ and so appears within the above class of shrinkers whenever $\frac{1}{n-k} < c_n$.

\section{\bf Convexity estimate}
\label{sec:conv}

\textit{On notation in Section \ref{sec:conv}:} Recall that for Theorem~\ref{thm:improved-conv}, we focus on a family $F\colon M\times (-\infty, 0)\to \bb R^{n+1}$ of hypersurfaces evolving by MCF. Since we work in codimension one, in this section we let $H$ denote the usual scalar mean curvature, which we assume is nonnegative throughout what follows. 
We let $\nu$ denote a choice of unit normal (we assume $M$ is orientable) and $\mathbf{H} = - H \nu$ the vector-valued mean curvature. To match standard conventions, we let $A$ denote the scalar-valued $(0,2)$-tensor second fundamental form through this section (so that $H = -\tr A = - g^{ij} A_{ij}$). The only connection we need is the usual one $\nabla$ here.

We will prove Theorem~\ref{thm:improved-conv} in this section.
We will use an integral estimate to prove a convexity estimate, which says that the smallest principal curvature $\lambda_1$ satisfies $\lambda_1\ge -\varepsilon H$ for all $\varepsilon\in(0,1),$ implying $\lambda_1\ge 0.$

The uniform pinching condition assumed in Theorem \ref{thm:improved-conv} says that there exists $L>0$ such that $|A|\le LH.$
We will make use of the pinching quantity $LH-|A|$ a lot.
In most of the parts of this section, we will assume $LH\ge |A|>0,$ since otherwise the strong maximum principle will imply that $H$ and $|A|$ both vanish identically.
We have seen this in Section~\ref{sec:Planarity}, and will go through it again at the beginning of the proof of Theorem~\ref{thm:improved-conv}.

For $\varepsilon\in(0,1)$ and $\sigma\in(0,1),$ we let $G_\varepsilon
:=\max\set{-\lambda_1 - \varepsilon (2LH-|A|),0}$ 
and 
\[
G:=G_{\varepsilon,\sigma}
:=\frac{G_\varepsilon}{H^{1-\sigma}},
\]
when $H> 0$. Our goal is to show that given $\varepsilon\in(0,1),$ we can suitably choose $\sigma\in(0,1)$ such that  $G_{\varepsilon,\sigma}\le 0,$ and hence $G_\varepsilon\le 0.$
This will then imply the estimate $\lambda_1\ge -\varepsilon H$ mentioned above. 
When $\varepsilon$ and $\sigma$ are fixed (which is true until the proof of Theorem~\ref{thm:improved-conv} given at the end of this section) we will just write $G$ instead of $G_{\varepsilon,\sigma}.$

The main reason why we can handle $L$-pinching for any $L$ is that our goal is to show $G_\varepsilon = 0.$
This means that we only care about points at which $G_\varepsilon>0$ a priori, which means
\begin{align}\label{lambda-pinched}
\lambda_1
< -\varepsilon\pr{2LH-|A|}
\le -\varepsilon LH
\le -\varepsilon |A|.
\end{align}
This gives us good lower bounds for both the evolution of the pinching quantity and the symmetrized curvature tensor that leads to the Poincar\'e-type inequality.

We start by deriving the basic evolution equation of $G_\varepsilon.$
Note that $G_\varepsilon$ may not be smooth, so the differential equation we get holds in the barrier sense.

\begin{lemma}
\label{lem:f_varepsilon-evol}
	Suppose $0<|A|\le LH$ on $M_t$ for some $L\ge 1.$
    Given $\varepsilon\in(0, \frac{1}{L}),$ there exists $\gamma_0 = \gamma_0(n, \varepsilon, L)>0$ such that 
    \begin{align}\label{f_varepsolon-evol}
    (\bd_t-\D) G_\varepsilon
    \le G_\varepsilon\pr{|A|^2 - \gamma_0\frac{|\n A|^2}{H^2}}
    \end{align}
    in the barrier sense.
\end{lemma}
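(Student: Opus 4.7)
My plan is to compute $(\partial_t-\Delta) G_\varepsilon$ pointwise, in the barrier sense, at every point where $G_\varepsilon>0$. Such a point forces $\mu := -\lambda_1 > \varepsilon(2LH-|A|) \ge \varepsilon LH > 0$, so $\lambda_1$ is a strictly negative, simple principal curvature with a quantitative spectral gap $\lambda_p - \lambda_1 = \lambda_p + \mu \ge \mu > \varepsilon|A|$ for $p > 1$. Near such a point I extend the unit eigenvector $e_1$ smoothly with an orthonormal frame $\{e_1,\dots,e_n\}$, and assemble three evolution ingredients: the standard $(\partial_t-\Delta) H = |A|^2 H$; the Kato-improved
\begin{equation*}
(\partial_t-\Delta)|A| \le |A|^3 - \frac{|\nabla A|^2 - |\nabla|A||^2}{|A|}
\end{equation*}
coming from $(\partial_t-\Delta)|A|^2 = 2|A|^4-2|\nabla A|^2$; and a barrier upper bound for $(\partial_t-\Delta)\mu$.

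For the last one, naively $(\partial_t-\Delta) h_{ij} = |A|^2 h_{ij} - 2H h_{ik}h^k_j$ looks like it will leave a harmful $+2H\mu^2$ residue, but this term is cancelled exactly by the metric evolution. Indeed, differentiating $\lambda_1 = h(e_1,e_1)$ along the flow with $|e_1|_g \equiv 1$, the constraint $\partial_t|e_1|_g^2 = 0$ together with $\partial_t g = -2Hh$ produces an additional $2h(e_1,\partial_t e_1) = 2H\lambda_1^2$ that kills the $-2H\lambda_1^2$ in $(\partial_t h)(e_1,e_1)$. Combined with the standard eigenvalue-perturbation identity $\Delta\lambda_1 = (\Delta h)(e_1,e_1) - 2\sum_{p>1} |(\nabla h)(e_1,e_p)|^2 /(\lambda_p-\lambda_1)$, this gives in the barrier sense
\begin{equation*}
(\partial_t-\Delta)\mu \le |A|^2 \mu - 2 \sum_{p>1} \frac{|(\nabla h)(e_1,e_p)|^2}{\lambda_p-\lambda_1}.
\end{equation*}
Assembling the three pieces with the coefficients $1, -2\varepsilon L, +\varepsilon$ respectively, the reaction terms telescope as $|A|^2[\mu-2\varepsilon LH+\varepsilon|A|] = |A|^2 G_\varepsilon$, leaving
\begin{equation*}
(\partial_t-\Delta) G_\varepsilon \le |A|^2 G_\varepsilon - 2\sum_{p>1} \frac{|(\nabla h)(e_1,e_p)|^2}{\lambda_p-\lambda_1} - \varepsilon\frac{|\nabla A|^2 - |\nabla|A||^2}{|A|}.
\end{equation*}

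The main obstacle is then to convert these two nonpositive gradient terms into $-\gamma_0 G_\varepsilon |\nabla A|^2/H^2$. Here I would exploit the pinched regime carefully: since $\mu\le|A|\le LH$ one has $\lambda_p-\lambda_1 \le 2|A|\le 2LH$, so the eigenvector-sum dominates a positive multiple of $H^{-1}\sum_{p>1}|(\nabla h)(e_1,e_p)|^2$, and $G_\varepsilon \le LH$ shows that bounding the right-hand side by $-\gamma_0 G_\varepsilon|\nabla A|^2/H^2$ reduces to establishing a Poincar\'e-type inequality of the form
\begin{equation*}
\sum_{p>1}|(\nabla h)(e_1,e_p)|^2 + \varepsilon(|\nabla A|^2 - |\nabla|A||^2) \ge c(n,\varepsilon,L)\,|\nabla A|^2
\end{equation*}
in the pinched region. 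This is precisely the content alluded to in the paragraph preceding the lemma. Using the total symmetry of $\nabla h$ from Codazzi, the only components of $|\nabla A|^2$ not captured by the eigenvector sum are those fully tangent to $e_1$ or fully transverse to $e_1$; the quantitative spectral gap $\mu > \varepsilon|A|$ forces the Kato-type correction $|\nabla A|^2-|\nabla|A||^2$ to capture the former with a definite fraction, while the latter is controlled by the transverse components already in the first sum. Choosing $\gamma_0 = \gamma_0(n,\varepsilon,L)>0$ accordingly yields the lemma.
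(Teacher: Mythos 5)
Your assembly of the three evolution pieces with coefficients $1$, $-2\varepsilon L$, $+\varepsilon$ is correct, the reaction telescope $|A|^2(\mu-2\varepsilon LH+\varepsilon|A|)=|A|^2 G_\varepsilon$ is exactly right, and deriving the eigenvalue inequality $(\partial_t-\Delta)\mu \le |A|^2\mu$ via perturbation theory rather than citing \cite[Prop.~12.9]{ACGL} is a reasonable substitute. The gap lies entirely in the last step, the ``Poincar\'e-type inequality.''

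Your proposed inequality
\[
\sum_{p>1}\frac{|(\nabla h)(e_1,e_p)|^2}{1} \;+\; \varepsilon\bigl(|\nabla A|^2-|\nabla|A||^2\bigr) \;\ge\; c\,|\nabla A|^2
\]
is in fact true on $\{\lambda_1 \le -\varepsilon L H\}$ (it follows, with room to spare, from the second term alone by the paper's Lemma~\ref{Appendix-grad-lower-bound}), but your justification of it does not go through. You correctly identify that the sum $\sum_{p>1}|(\nabla h)(e_1,e_p)|^2$ misses the ``fully tangent'' part $|\nabla_1 h_{11}|^2$ and the ``fully transverse'' part $\sum_{k,p,q>1}|\nabla_k h_{pq}|^2$, and you assert that the latter ``is controlled by the transverse components already in the first sum.'' That cannot work: every component appearing in the first sum has at least one index equal to $1$ (and Codazzi totally symmetrizes in the three indices, so one index remains $1$ no matter how you permute), whereas the fully transverse components have no index equal to $1$. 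These two sets of components are disjoint and independent, so no amount of Codazzi-shuffling makes the first control the second. What you are missing, and what the paper's Lemma~\ref{Appendix-grad-lower-bound} exploits in an essential way, is the \emph{largest} eigenvalue. Mean convexity gives $\lambda_n \ge H/n > 0$ while the pinched regime gives $\lambda_1 \le -\varepsilon LH < 0$; both extreme eigenvalues are quantitatively large relative to $H$. The Kato deficit factors as $\frac{|A\otimes\nabla A - \nabla A\otimes A|^2}{2|A|^2}$, and the claim in Appendix~\ref{sec:app-grad-est} shows that $|A\otimes\nabla A - \nabla A\otimes A|^2$ dominates $\tfrac14 A_{ii}^2\bigl(|\nabla A|^2 - |\nabla_i A_{ii}|^2\bigr)$ for \emph{each} $i$; adding the contributions from $i=1$ and $i=n$ then fills in the two excluded diagonal components, yielding the full $|\nabla A|^2$. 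Your argument only invokes $\lambda_1$ and so has no mechanism to recover the components of $\nabla A$ living entirely in the $e_1^{\perp}$ directions. The fix is either to repeat the two-eigenvalue argument or simply to cite Lemma~\ref{Appendix-grad-lower-bound} directly, as the paper does, after which the eigenvector-sum term can be discarded altogether.
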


\begin{proof}
	By \cite[Proposition~12.9]{ACGL} (cf. \cite{B15,LN22}), we have 
	\begin{align}\label{lambda1-evol}
	(\bd_t-\D) \lambda_1
	\ge \lambda_1 |A|^2
	\end{align}
	in the barrier sense.
	
    Next, we look at the evolution of $G_\varepsilon.$
    We first work on the region $\set{G_\varepsilon>0}$, where $G_\varepsilon = -\lambda_1-\varepsilon(2LH -|A|) > 0$.
    The evolution equations of $A$ and $H$ imply
    \begin{align}\label{A-LH-evol}
    	(\partial_t - \Delta)(|A|-2LH)=& \frac{(\partial_t - \Delta)|A|^2 + 2|\nabla |A||^2  }{2|A|} - 2L|A|^2H \\
    	=& \frac{ |A|^4  - |\nabla A|^2 + |\nabla|A||^2}{|A| }- 2L|A|^2H
        \nonumber\\
    	=& |A|^2(|A|-2LH) - \frac{|\nabla A|^2-|\nabla|A||^2 }{|A|}.\nonumber
    \end{align}
    Using $G_\varepsilon \leq -\lambda_1\le |A|\le LH$ and \eqref{lambda1-evol}, this implies
    \begin{align}\label{A-LH-evol-2}
    	(\partial_t - \Delta)G_{\varepsilon} 	
        \leq |A|^2G_{\varepsilon} - \frac{\varepsilon\pr{|\nabla A|^2 - |\nabla |A||^2}}{|A|}
    	\leq |A|^2G_{\varepsilon} - \frac{\varepsilon\pr{|\nabla A|^2 - |\nabla |A||^2}}{L^2 H^2} G_{\varepsilon}.
    \end{align}
    On $\set{G_\varepsilon>0},$ by \eqref{lambda-pinched}, we have $\lambda_1\le -\varepsilon LH,$ so by Lemma~\ref{Appendix-grad-lower-bound} (c.f \cite[Lemma~2.1]{L17}) and the pinching assumption, we have
    \begin{align*}
    	\frac{|\nabla A|^2 - |\nabla |A||^2}{H^2} \geq \frac{\varepsilon^2 L^2}{8n^2} \frac{|\nabla A|^2}{|A|^2}
        \ge \frac{\varepsilon^2 }{8n^2} \frac{|\nabla A|^2}{H^2}.
    \end{align*} 
    Plugging this into \eqref{A-LH-evol-2},
    we obtain
    \begin{align*}
    	(\partial_t - \Delta)G_{\varepsilon} 	\leq |A|^2G_{\varepsilon} - \frac{\varepsilon^3}{8n^2L^2} \frac{|\nabla A|^2}{H^2} G_{\varepsilon} .
    \end{align*}
    The desired estimate \eqref{f_varepsolon-evol} then holds with $\gamma_0 = {\varepsilon^3}/\pr{8n^2L^2}$ on $\set{G_\varepsilon>0}.$  

     To see that \eqref{f_varepsolon-evol} also holds in the barrier sense on the region $\{G_{\varepsilon} = 0\}$, it suffices to observe that since $G_{\varepsilon} \geq 0$, we can take the zero function as a lower barrier. Since, of course, $(\partial_t - \Delta) 0 = 0$, we conclude 
     \[
     (\partial_t -\Delta)G_{\varepsilon} \leq 0 = G_{\varepsilon} \left(|A|^2 - \gamma_0 \frac{|\nabla A|^2}{H^2}\right)
     \]
     holds at any point in $\{G_{\varepsilon} = 0\}$. This finishes the proof of the lemma.
\end{proof}

As a consequence, we derive an evolution equation for weighted integral of $G=G_{\varepsilon,\sigma}.$
Recall that we consider the weight
$$\Phi(x,t)
:= \frac 1{(-4\pi t)^{n/2}}
e^{\frac{|x|^2}{4t}}$$
given by the backward heat kernel.
As in Section~\ref{sec:integral-form}, for the family of immersions $F\colon M\times (-\infty, 0)\to \bb R^{n+1},$ on $M,$
we let $dV_t$ be the volume form of the metric $g_t$ induced from the Euclidean metric by the immersion $F(\cdot, t).$
We note that based on the Shi-type estimate of Ecker--Huisken~\cite{EH91} (cf. \cite[Proposition~3.22]{E04}), the polynomial growth assumption on $|A|$ implies that $|\n A|$ is also of polynomial growth on each time slice. 
In particular, given each $t<0,$ both $|A|\Phi(\cdot,t)$ and $|\n A|\Phi(\cdot,t)$ decay exponentially fast.

\begin{lemma}
\label{lem:A-poly-imply-nA-poly}
	If $M_t$ is an ancient MCF such that $|A|(\cdot,t)$ has polynomial growth for each $t,$ then for all $m\ge 0,$ $|\n^m A|(\cdot,t)$ also has polynomial growth for each $t.$
\end{lemma}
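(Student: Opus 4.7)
The plan is to invoke the Ecker--Huisken Shi-type interior estimates \cite{EH91} (see \cite[Proposition~3.22]{E04}), which assert that if $|A|^2 \le K$ on an extrinsic parabolic cylinder $B_r^{\text{ext}}(x_0) \cap M_\tau$ for $\tau \in [t_0 - r^2, t_0]$, then $|\nabla^m A|^2(x_0, t_0) \le C(n, m, r, K)$ with polynomial dependence on $K$. To apply this at points $x_0 \in M_{t_0}$ with $|x_0|$ large, we need a uniform polynomial bound on $|A|$ over a short time interval preceding $t_0$, which is the main content that must be extracted from the time-slice-wise polynomial hypothesis.

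\textbf{Step 1 (Short-time uniform polynomial growth).} Fix $t_0 < 0$ and set $\tau_0 := t_0 - 1$. By hypothesis, $|A|(\cdot, \tau_0) \le C_0(1 + |x|)^{d_0}$. Using the evolution inequality $(\partial_t - \Delta)|A|^2 \le 2|A|^4$ and a localized maximum principle on exhausting extrinsic balls $B_R^{\text{ext}}(0)$, we compare $|A|^2$ to a supersolution of the form
\begin{equation*}
U(x, \tau) := \frac{C(1 + |x|^2)^{d}}{1 - \kappa(\tau - \tau_0)}
\end{equation*}
for suitably chosen $C, d, \kappa > 0$. Letting $R \to \infty$ removes any contribution from the exterior boundary and yields constants $\delta, C_1, d_1 > 0$ such that
\begin{equation*}
|A|(x, \tau) \le C_1(1 + |x|)^{d_1} \quad \text{for all } (x, \tau) \in M \times [t_0 - \delta, t_0].
\end{equation*}

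\textbf{Step 2 (Application of the interior estimate).} For each $x_0 \in M_{t_0}$ with $R := |x_0|$ large, set $r_0 := \min\{1, \sqrt{\delta}\}$. Step 1 then gives $|A|^2 \le C_1^2(2 + R)^{2d_1}$ throughout the cylinder $B_{r_0}^{\text{ext}}(x_0) \cap M_\tau$ for $\tau \in [t_0 - r_0^2, t_0]$. The Shi-type estimate therefore yields a bound on $|\nabla^m A|^2(x_0, t_0)$ that is polynomial in $R$, proving the desired polynomial growth of $|\nabla^m A|(\cdot, t_0)$. Since $t_0 < 0$ was arbitrary, the conclusion holds at every time slice, and the argument treats all $m \ge 0$ uniformly.

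\textbf{Main obstacle.} Step 1 is the essential technical point. The hypothesis only provides polynomial growth at each individual time slice, with constants $C_\tau, d_\tau$ that could in principle depend badly on $\tau$. Upgrading this to a uniform polynomial bound over a short backward time interval, so that Shi-type estimates become applicable with polynomial curvature bounds, is the crux. Once uniform polynomial bounds are available on a fixed parabolic cylinder, Step 2 is a black-box application of \cite[Proposition~3.22]{E04}. The choice of $\delta$ in Step 1 is dictated by the doubling mechanism of the ODE $u' = 2u^2$, but since we only need a bound on a fixed small interval rather than all the way to $-\infty$, this poses no serious difficulty.
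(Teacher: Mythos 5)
Your overall plan---invoke the Ecker--Huisken Shi-type interior estimate on a fixed extrinsic parabolic cylinder after securing a uniform polynomial curvature bound on a short backward time interval---is the same strategy as the paper's, and your Step~2 matches the paper's application of \cite[Proposition~3.22]{E04} exactly. The divergence is Step~1: the paper simply takes for granted that the per-time-slice hypothesis yields a single polynomial bound valid for all $t$ in the fixed interval $(t_0-4,t_0)$ (in the intended application to self-similar shrinkers this uniformity is automatic from the shrinker parametrization $M_t=\sqrt{-t}\,\Sigma$), whereas you try to \emph{derive} it by a comparison-principle argument. That derivation does not work.

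The proposed function $U(x,\tau)=C(1+|x|^2)^d/(1-\kappa(\tau-\tau_0))$ is not a supersolution of $(\partial_\tau-\Delta)u\ge 2u^2$. Writing $\phi(x)=C(1+|x|^2)^d$ and $\psi(\tau)=(1-\kappa(\tau-\tau_0))^{-1}$, and using that along MCF $(\partial_\tau-\Delta)|F|^2=-2n$ (so $(\partial_\tau-\Delta)\phi\le 0$), one has
\[
(\partial_\tau-\Delta)U=\psi\,(\partial_\tau-\Delta)\phi+\kappa\,\phi\,\psi^2\ \le\ \kappa\,\phi\,\psi^2,
\]
whereas $2U^2=2\phi^2\psi^2$. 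The supersolution inequality $(\partial_\tau-\Delta)U\ge 2U^2$ therefore fails as soon as $2\phi(x)>\kappa$, that is, for every sufficiently large $|x|$. This is not a cosmetic issue: the ODE $u'=2u^2$ with data $u(\tau_0)=a$ blows up at $\tau_0+(2a)^{-1}$, and if $a$ grows polynomially in $|x|$ then the guaranteed lifetime shrinks to zero at spatial infinity, so no uniform $\delta>0$ can be produced by the reaction--diffusion inequality alone. Your closing remark that the doubling mechanism ``poses no serious difficulty'' is precisely where the argument goes astray.

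The repair, in the spirit of the paper, is to read the hypothesis as providing a polynomial bound that is locally uniform in time---i.e., a single polynomial valid on each bounded time interval---which is both what the paper's proof actually uses and what holds in the application (shrinkers). With that reading, your Step~1 is unnecessary and Step~2 alone gives the result.
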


\begin{proof}
    We prove the lemma when $m=1,$ and as in the Ecker--Huisken estimate, the case when $m>1$ can be done by induction.
	We use the form in \cite[Proposition~3.22]{E04}, whose proof says that if $|A|\le c_0$ in $ B_2(x_0)\times (t_0-4,t_0),$ then there exists a universal polynomial $\mathcal P$ such that $|\n A|\le \mathcal P(c_0, n)$ in $ B_1(x_0)\times (t_0-1,t_0).$  
	See \cite[Page 46]{E04}.
	We use this to prove the lemma.
	
	We fix $t_0<0.$
	By the assumption, there exists a polynomial $p$ such that $|A|(x,t)\le p(|x|)$ for $t\in (t_0-4, t_0)$ and $x\in M_t$ with $|x|\ge 1.$
	We may assume all the coefficients of $p$ are non-negative, so in particular, given $s_0\ge 1,$ $p(s)\le p(2s_0)$ for all $s\in (s_0-1, s_0+1).$
	The Ecker--Huisken estimate above then implies 
	\begin{align*}
	|\n A(x,t)|
	\le \mathcal P\pr{\sup_{B_1(x)\times (t-1,t)}|A|, n}
	\le \mathcal P\pr{\sup_{s\in (|x|-1, |x|+1)}p(s), n}
	\le \mathcal P \pr{p\pr{2|x|}, n}
	\end{align*}
	for $t\in (t_0-1,t_0)$ and $x\in M_t.$
	This is also a polynomial in $|x|$, and the lemma follows.
\end{proof}

\begin{remark}
	\label{rmk:A-poly-high-codim}
	Although it is not used in this section, Lemma~\ref{lem:A-poly-imply-nA-poly} is also true for higher codimension MCF.
	This can be derived based on the Shi-type estimate proven by Andrews--Baker in~\cite[Lemma~3]{AB}.
\end{remark}

As mentioned, Lemma~\ref{lem:A-poly-imply-nA-poly}, implies that for all $m\ge 0,$ $|\n^m A|\Phi$ has exponential decay.
In particular, based on the bounded entropy assumption, we can freely take the weighted integrals of arbitrary powers of the curvature and its derivatives.

\begin{cor}
\label{cor:evol-of-int-Gp}
	Suppose $0<|A|\le LH$ on $M_t$ for some $L\ge 1.$
    Given $\sigma\in (0,1)$ and $\varepsilon\in (0,\frac{1}{L}),$ there exists $\gamma = \gamma(\varepsilon, L)>0$ such that for any $p \geq 2 + \frac{2n}{\gamma}$,
    letting $v = G^{p/2}$, the estimate
    \begin{align}\label{fp-Phi-evol}
    \bd_t \int_{M}
    v^2\Phi dV_t
    \le \int_{M}
    \pr{
    \sigma p v^2|A|^2
	- \gamma p  v^2\frac{|\n A|^2}{H^2}
	- 2|\n v|^2
    - v^2 \abs{\mathbf{H} + \frac{x^\perp}{-2t}}^2
    }\Phi dV_t
    \end{align} 
    holds for almost every $t \in (-\infty, 0)$.
\end{cor}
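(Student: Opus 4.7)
The plan is to combine Lemma~\ref{lem:f_varepsilon-evol} with the evolution of $H^{1-\sigma}$ to obtain a pointwise (barrier) inequality for $(\bd_t-\D)v^2$, and then integrate against $\Phi$ using Ecker's local monotonicity formula.

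Writing $G_\varepsilon = G\cdot H^{1-\sigma}$ and applying the product formula
$(\bd_t-\D)(fg) = f(\bd_t-\D)g + g(\bd_t-\D)f - 2\ppair{\n f,\n g}$
together with $(\bd_t-\D)H^{1-\sigma} = (1-\sigma)H^{1-\sigma}|A|^2 + \sigma(1-\sigma)H^{-\sigma-1}|\n H|^2$ (which follows from $(\bd_t-\D)H = |A|^2 H$), Lemma~\ref{lem:f_varepsilon-evol} yields in the barrier sense
\[
(\bd_t-\D)G \le \sigma G|A|^2 - \gamma_0 G\frac{|\n A|^2}{H^2} - \sigma(1-\sigma)G\frac{|\n H|^2}{H^2} + \frac{2(1-\sigma)}{H}\ppair{\n G,\n H}.
\]
The key cancellation $|A|^2 - (1-\sigma)|A|^2 = \sigma|A|^2$ is exactly what produces the $\sigma p v^2|A|^2$ term in \eqref{fp-Phi-evol}.

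Next, from $(\bd_t-\D)v^2 = pG^{p-1}(\bd_t-\D)G - p(p-1)G^{p-2}|\n G|^2$ and the identity $|\n v|^2 = \tfrac{p^2}{4}G^{p-2}|\n G|^2$, I multiply the bound above by $pG^{p-1}$ and dispatch the cross term via the Kato-type inequality $|\n H|^2\le n|\n A|^2$ combined with Young's inequality with a parameter $\alpha>0$:
\[
\frac{2(1-\sigma)p}{H}G^{p-1}\ppair{\n G,\n H}\le \frac{p}{\alpha}G^{p-2}|\n G|^2 + n\alpha(1-\sigma)^2 p\, v^2 \frac{|\n A|^2}{H^2}.
\]
Setting $\gamma:=\gamma_0/2$ and $\alpha:=\gamma/n$ makes the net coefficient of $v^2|\n A|^2/H^2$ at most $-(\gamma_0 - n\alpha)p\le -\gamma p$, while the pure-gradient pieces combine to $-[p(p-1)-p/\alpha]G^{p-2}|\n G|^2$, which is at most $-2|\n v|^2$ precisely when $p\ge 2+2/\alpha = 2+2n/\gamma$. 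The leftover $-\sigma(1-\sigma)pv^2|\n H|^2/H^2$ is non-positive and is discarded. The upshot is the pointwise (barrier) inequality
\[
(\bd_t-\D)v^2 \le \sigma p v^2|A|^2 - \gamma p v^2\frac{|\n A|^2}{H^2} - 2|\n v|^2.
\]
Invoking Ecker's local monotonicity formula \cite[Theorem~4.13]{E04} with the shrinker weight $\Phi$,
\[
\bd_t \int_M f\,\Phi\,dV_t = \int_M \pr{(\bd_t-\D)f}\Phi\,dV_t - \int_M f\abs{\mathbf{H}+\frac{x^\perp}{-2t}}^2\Phi\,dV_t,
\]
and substituting $f=v^2$ together with the inequality above then yields \eqref{fp-Phi-evol}. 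Weighted integrability of all terms follows from Lemma~\ref{lem:A-poly-imply-nA-poly}, which promotes the polynomial growth of $|A|$ to polynomial growth of all covariant derivatives of $A$, and from the finite entropy hypothesis, which makes every polynomial expression in the curvature exponentially integrable against $\Phi$.

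The main technical obstacle is regularity: the function $\lambda_1$ (and hence $G_\varepsilon$ and $v$) need not be $C^2$ at points where the smallest principal curvature has higher multiplicity, so every pointwise computation above must be interpreted in Hamilton's barrier sense. To pass from a barrier pointwise inequality to the integral identity on $M_t$, one uses a standard mollification of $\max\{\cdot,0\}$ together with smooth supporting functions for $\lambda_1$ and passes to the limit via dominated convergence; the Gaussian weight combined with the polynomial growth of curvature makes the dominating integrand integrable in space, and absolute continuity of the resulting weighted integral in $t$ delivers \eqref{fp-Phi-evol} for almost every $t$, as asserted.
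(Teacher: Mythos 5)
Your proposal is correct and follows essentially the same route as the paper: start from Lemma~\ref{lem:f_varepsilon-evol}, compute $(\partial_t-\Delta)G$ via the product rule and the evolution of $H^{1-\sigma}$, absorb the cross term $\frac{2(1-\sigma)}{H}\langle\nabla G,\nabla H\rangle$ into the gradient terms by Young's inequality and the Kato-type bound $|\nabla H|^2\le n|\nabla A|^2$, set $\gamma=\gamma_0/2$ so the constraint $p\ge 2+\frac{2n}{\gamma}$ emerges, and then integrate against $\Phi$. The only cosmetic differences are that you apply Young's inequality at the level of $G^p$ rather than $G$ (same constants in the end), and you justify the passage from a barrier inequality to the weighted integral inequality by mollification and supporting functions, whereas the paper invokes the locally Lipschitz plus Alexandrov's theorem route together with an explicit spatial cutoff $\varphi_R\Phi$; both are standard and equivalent for this purpose.
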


\begin{proof}
	By Lemma~\ref{lem:f_varepsilon-evol}, we have
	\begin{align*}
	&\quad (\bd_t-\D) G\\
	& = \frac 1{H^{1-\sigma}} (\bd_t-\D) G_\varepsilon
	- \frac{G_\varepsilon}{H^{2-2\sigma}}
	(\bd_t-\D)H^{1-\sigma}
	+ \frac 2{H^{1-\sigma}} \pair{\n H^{1-\sigma}, \n G}\\
	& \le \frac 1{H^{1-\sigma}}\cdot
	G_\varepsilon\pr{|A|^2 - \gamma_0\frac{|\n A|^2}{H^2}}
	- \frac{G_\varepsilon}{H^{2-2\sigma}}
	\pr{
	\frac{1-\sigma}{H^\sigma} \cdot H|A|^2
	+ \sigma(1-\sigma) \frac{|\n H|^2}{H^{1+\sigma}}
	}
	+ \frac 2{H^{1-\sigma}} \pair{\n H^{1-\sigma}, \n G}\\
	& = \sigma G |A|^2
	- \gamma_0 G \frac{|\n A|^2}{H^2}
	+ \frac{2(1-\sigma)}H \pair{\n H, \n G} 
    - \sigma(1-\sigma)G \frac{|\nabla H|^2}{H^2}.
	\end{align*}
	Throwing away the last term above, and using Young's inequality on the third term, we find 
    \begin{align*}
        (\partial_t - \Delta) G  &\leq \sigma G |A|^2 - \gamma_0 G \frac{|\nabla A|^2}{H^2} + \frac{\gamma_0}{2n}G \frac{|\nabla H|^2}{H^2} + \frac{2n}{\gamma_0} G \frac{|\nabla G|^2}{G^2} \\
        & \leq \sigma G |A|^2 - \frac{1}{2}\gamma_0 G \frac{|\nabla A|^2}{H^2} + \frac{2n}{\gamma_0} G \frac{|\nabla G|^2}{G^2}
    \end{align*}
    where we use the rough estimate $|\n H|^2 \le n|\n A|^2$. Thus, 
    \begin{align*}
    (\partial_t - \Delta) G^p &= p G^{p-1} (\partial_t - \Delta) G - p(p-1) G^{p-2} |\nabla G|^2\\
    & \leq \sigma p G^p |A|^2 - \frac{1}{2} \gamma_0 p G^p \frac{|\nabla A|^2}{H^2} + \frac{2np}{\gamma_0} G^{p-2}|\nabla G|^2 - p(p-1) G^{p-2} |\nabla G|^2\\
    & =  \sigma p G^p |A|^2 - \frac{1}{2} \gamma_0 p G^p \frac{|\nabla A|^2}{H^2} - 2|\nabla v|^2+ \left(- \frac{p^2}{2} + p + \frac{2n}{\gamma_0} p\right)G^{p-2}|\nabla G|^2
    \end{align*}
    where we use $2|\nabla v|^2 = \frac{p^2}{2} G^{p-2}|\n G|^2,$ based on the definition $v^2= G^p.$
	Thus, the pointwise estimate follows (after taking $\gamma = \frac{1}{2}\gamma_0$) if 
    \begin{align*}
	-\frac{p^2}{2} + p + \frac{n}{\gamma}p\le 0,
	\end{align*}
    which is true if $p\ge 2+\frac{2n}\gamma.$
    Assuming this, we thus have
    \begin{align*}
	(\bd_t-\D) G^p
	& \le \sigma p G^p|A|^2
	- \gamma p  G^p\frac{|\n A|^2}{H^2}
	- 2|\n v|^2.
	\end{align*}
	
    With this differential inequality, the integral estimate then follows.
	In fact, since the second fundamental form $A$ is smooth and $\lambda_1$ is the smallest eigenvalue of $-A,$ we know that $G_\varepsilon$ (and hence $G$) is a locally Lipschitz function and can locally be expressed as the sum of a smooth function and a concave function (cf. \cite{B15, LN22}).
	Alexandrov's theorem~\cite{A39} implies that $G$ admits a second order Taylor expansion away from a set of measure zero.
	In particular, \eqref{f_varepsolon-evol} can be understood in the distributional sense (cf. \cite[Section~6.4]{EG92}).
	Thus, given a non-negative smooth function $\eta\colon M\times(-\infty,0)\to\bb R$ with compact support, we have
	\begin{align}\label{Gp-distribution}
	\bd_t \int_{M}
	G^p\eta dV_t
	\le \int_{M}
	\pr{
		\sigma p G^p|A|^2
		- \gamma p  G^p\frac{|\n A|^2}{H^2}
		- 2|\n v|^2
	}\eta dV_t
	+ \int_M G^p \pr{\bd_t+\D -H^2} \eta dV_t
	\end{align}
	for almost every $t.$
	
    We now use a standard cutoff trick to prove \eqref{fp-Phi-evol}, cf. \cite[Section~4]{E04}.
	Let $\varphi_R\colon \bb R^{n+1}\to\bb R$ be a smooth function such that $0\le \varphi_R\le 1,$ $\varphi|_{B_R}=1,$ and $\varphi|_{\bb R^{n+1}\setminus B_{2R}}=0.$
	We can find such a $\varphi_R$ for all large $R$ such that
	\begin{align*}
	R\abs{\n^{\rm Euc} \varphi_R}
	+ R^2\abs{\Hess^{\rm Euc}_{\varphi_R}} \le C_n
	\end{align*}
	for some $C_n<\infty.$
	Thus, if we abuse the notation and 
    write $\varphi_R\pr{p,t}
	= \varphi_R\pr{F(p,t)},$ we have
	\begin{align*}
	\abs{(\bd_t - \D)\varphi_R}
	= \abs{
	\pr{- \D^{\rm Euc}}\varphi_R
	+ \Hess^{\rm Euc}_{\varphi_R}(\nu, \nu)
	}
	\le \frac{C_n}{R^2} \chi_{B_{2R}\setminus B_R}
	\end{align*}
	where $\nu$ is a fixed unit normal of $F$ and $\chi_{B_{2R}\setminus B_R}$ is the characteristic function of $B_{2R}\setminus B_R.$
    Observe that 
    \begin{align*}
    \pr{\bd_t+\D -H^2} \pr{\Phi \varphi_R}  &= \varphi_R\pr{\bd_t+\D -H^2} \Phi + \Phi (\partial_t + \Delta) \varphi_R + 2 \langle \nabla \Phi, \nabla \varphi_R\rangle\\
    & = \varphi_R\pr{\bd_t+\D -H^2} \Phi + \Phi (\partial_t - \Delta) \varphi_R + 2 \mathrm{div}(\Phi \nabla \varphi_R),
    \end{align*}
    and recall the evolution equation $(\bd_t +\D -|\h |^2)\Phi = -\abs{\h + \frac{x^\perp}{-2t}}^2\Phi$.
	Thus, plugging in $\eta = \Phi\varphi_R$ to \eqref{Gp-distribution}, we can estimate
	\begin{align*}
	\bd_t \int_{M}
	G^p\Phi \varphi_R dV_t
	& \le \int_{M}
	\pr{
		\sigma p G^p|A|^2
		- \gamma p  G^p\frac{|\n A|^2}{H^2}
		- 2|\n v|^2
	}\Phi \varphi_R dV_t
	\\
    & \qquad + \int_M G^p \pr{\bd_t+\D -H^2} \pr{\Phi \varphi_R} \; dV_t\\
	& = \int_{M}
	\pr{
		\sigma p G^p|A|^2
		- \gamma p  G^p\frac{|\n A|^2}{H^2}
		- 2|\n v|^2
	}\Phi \varphi_R \;dV_t \\
	&\quad + \int_M G^p \pr{\bd_t + \D - H^2}\Phi \cdot \varphi_R dV_t+ \int_M G^p \Phi\cdot (\bd_t-\D)\varphi_R \; dV_t\\
	& \le \int_{M}
	\pr{
		\sigma p G^p|A|^2
		- \gamma p  G^p\frac{|\n A|^2}{H^2}
		- 2|\n v|^2
		- G^p \abs{\h + \frac{x^\perp}{-2t}}^2
	}\Phi \varphi_R dV_t\\
	&\quad + \frac{C_n}{R^2} \int_M \chi_{B_{2R}\setminus B_R} \Phi dV_t
	\end{align*}
	for almost every $t$.
	The boundedness of all the integrands then allows us to use the dominated convergence theorem to conclude the proof.
\end{proof}

To deal with the large reaction term in the evolution equation in \eqref{fp-Phi-evol}, we need a Poincar\'e-type inequality.
It essentially makes use of the pinching condition to show that the curvature can be bounded by its derivatives (cf. \cite[Proposition 3.2]{LN22}).
The techniques have been developed in many previous works \cite{H84, HS99, AB, L17, LN22} as an essential step of the Stampacchia iteration (cf. Remark~\ref{rmk:MCF-evol}).
The main difference here is that we are looking at the weighted integral, so an additional term involving the mean curvature will appear. 
It will finally be absorbed by the shrinker quantity term in \eqref{fp-Phi-evol}. In the following, we say $u  \in H^2_W := H^2_W(M)$ at time $t < 0$ if 
\[
\int_M (u^2 + |\nabla u|^2 + |\nabla^2 u|^2) \Phi \, dV_t < \infty.
\]
where as above $\Phi(x, t) = (-4\pi t)^{-n/2} e^{|x|^2/4t}$. The next result is applied at a fixed time $t < 0$.

\begin{prop}
\label{prop:Poincare}
    Suppose $0<|A|\le LH$ on $M_t$ for some $L>0$ for some $t < 0$.
    Given $\delta>0$ and $\ovl\varepsilon>0,$ there exists $P_\delta <\infty$ depending only on $\delta,$ $\ovl \varepsilon,$ and $L$ such that the following holds.
    If an $H^2_W$ function $u\colon M\to\bb R$ is supported in $\set{\lambda_1\le -\ovl\varepsilon H}$ such that both $u$ and $\n u$ are of polynomial growth, then it satisfies 
	\begin{align*}
	\int_M u^2 |A|^2\Phi dV_t
	\le \int_M\pr{
	\delta |\n u|^2
	+ P_\delta u^2 \frac{|\n A|^2}{H^2}
	+ \frac{\delta}{|t|} u^2 \pr{1+|\pair{\h, x^\perp}|}
	}\Phi dV_t.
	\end{align*}
\end{prop}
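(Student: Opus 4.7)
The plan is to prove the weighted Poincar\'e inequality via an integration-by-parts argument modeled on the compact-case inequalities of Huisken--Sinestrari \cite{HS99} and Langford \cite{L17}, adapted to incorporate the Gaussian weight $\Phi$. The preliminary reduction is that on $\supp(u)\sbst\set{\lambda_1\le -\ovl\varepsilon H}$, the support condition combined with $|A|\ge|\lambda_1|$ and the pinching $|A|\le LH$ yields
\begin{equation*}
\ovl\varepsilon H\le |A|\le LH,
\end{equation*}
so $H>0$ there and $H$ and $|A|$ are comparable with constants depending only on $\ovl\varepsilon$ and $L$. This lets us freely interchange factors of $H$ and $|A|$ (or their reciprocals) on $\supp(u)$ at the cost of harmless multiplicative constants, and in particular removes any pointwise singularity from $1/H^2$.

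The principal step extracts $\int u^2|A|^2\Phi$ from a divergence computation. I would start from the Simons identity
\begin{equation*}
\tfrac12\D|A|^2=|\n A|^2+A^{ij}\n_i\n_j H+H\tr(A^3)-|A|^4,
\end{equation*}
multiply by $u^2/H^2$ (valid since $H>0$ on $\supp(u)$) and integrate against $\Phi\,dV_t$. Integrating the $\D|A|^2$ term by parts twice using the weighted IBP formula together with $\n\Phi=\Phi\,x^T/(2t)$ produces gradient and weight terms, while the $|A|^4/H^2$ and $\tr(A^3)/H$ reaction terms are controlled on $\supp(u)$ by constant multiples of $|A|^2$ (via $|A|\le LH$), thereby leaving $\int u^2|A|^2\Phi$ as the principal term on the left after rearrangement. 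Codazzi's identity $\n^j A_{ij}=-\n_i H$ converts the $A^{ij}\n_i\n_j H$ term (after one further IBP) into combinations of $|\n H|^2\le n|\n A|^2$, and all cross terms of the form $\int u\pair{\n u,\cdot}\Phi$ are split by Cauchy--Schwarz with a small parameter $\delta>0$, yielding $\delta\int|\n u|^2\Phi$ on one side and $P_\delta\int u^2|\n A|^2/H^2\cdot\Phi$ on the other.

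The Gaussian weight contributes extra boundary-type terms through $\pair{V,\n\Phi}/\Phi=\pair{V,x^T}/(2t)$ in each weighted IBP, where $V$ is the auxiliary vector field used. When $V$ carries a factor of $H$, the algebraic identity $H\pair{x,\nu}=-\pair{\h,x^\perp}$ converts these into integrals of $u^2\pair{\h,x^\perp}/t$, exactly matching the target $\tfrac{\delta}{|t|}\int u^2|\pair{\h,x^\perp}|\Phi$ term after Cauchy--Schwarz; remaining weight terms involving $|x^T|^2/t^2$ or $\text{div}_M(x^T)=n+\pair{x^\perp,\h}$ are absorbed similarly into $\tfrac{\delta}{|t|}\int u^2(1+|\pair{\h,x^\perp}|)\Phi$. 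Justification of the IBP on the noncompact $M$ uses Lemma~\ref{lem:A-poly-imply-nA-poly} to propagate the polynomial growth from $|A|$ to $|\n A|$ and $|\n^2 A|$, after which the exponential decay of $\Phi$ kills all boundary terms at infinity, while the hypothesis $u\in H^2_W$ ensures one may freely differentiate $u$.

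The main obstacle is the bookkeeping of sign-indefinite reaction terms from Simons and from repeated weighted IBP: these generate combinations of $|A|^2$, $\pair{\n H, x^T}$, and $\pair{\h,x^\perp}$ with various signs, and the argument only closes because the strict support condition $|A|\ge\ovl\varepsilon H$ on $\supp(u)$ provides the gap needed to absorb all sign-indefinite reactions back into the principal $\int u^2|A|^2\Phi$ term on the left. A secondary subtlety is that no Kato-type improvement $|\n A|^2\ge c|\n H|^2$ is required: the crude Codazzi bound $|\n H|^2\le n|\n A|^2$ is sufficient, precisely because the target form $|\n A|^2/H^2$ is already what appears on the right-hand side of the estimate.
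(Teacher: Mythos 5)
Your strategy of multiplying the \emph{scalar} Simons identity $\frac{1}{2}\Delta|A|^2 = |\nabla A|^2 + A^{ij}\nabla_i\nabla_j H + H\,\mathrm{tr}(A^3) - |A|^4$ by $u^2/H^2$ and integrating has a fatal flaw in the reaction terms, and I don't think the "gap" you invoke in your last paragraph actually saves it. After rearranging, the natural left-hand side of your inequality is $\int u^2\,(|A|^4 - H\,\mathrm{tr}(A^3))/H^2\,\Phi$, and the scalar reaction term satisfies the algebraic identity $|A|^4 - H\,\mathrm{tr}(A^3) = -\tfrac12\sum_{i,j}\lambda_i\lambda_j(\lambda_i - \lambda_j)^2$, which is \emph{sign-indefinite} on $\{\lambda_1 \le -\ovl\varepsilon H\}$. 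For example in $n=4$ with $\lambda_1 = -\ovl\varepsilon$, $\lambda_2 = 0.01$, $\lambda_3 = 0.02$, $\lambda_4 = 0.97 + \ovl\varepsilon$, $H = 1$, one finds $|A|^4 - H\,\mathrm{tr}(A^3) < 0$ for $\ovl\varepsilon$ small but positive. When the reaction quantity is negative, you simply cannot extract a positive multiple of $\int u^2 |A|^2 \Phi$ on the left, no matter how much you tune the $\delta$'s: you would be attempting to absorb a term of size $L\int u^2|A|^2\Phi$ into a left-hand side that is only guaranteed to be at least $\ovl\varepsilon^2 \int u^2|A|^2\Phi$, which cannot close when $L > \ovl\varepsilon^2$ (and $L$ is arbitrary in this paper). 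The support condition controls how negative $\lambda_1$ is, but it does not control the sign of the full reaction combination.

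The paper avoids this entirely by working with the commutator tensor $C := A\otimes A^2 - A^2\otimes A$, whose components in diagonalizing coordinates are $C_{ijkl}=\lambda_i\lambda_k(\lambda_k-\lambda_i)\delta_{ij}\delta_{kl}$, so $|C|^2 = \sum_{i,j}\lambda_i^2\lambda_j^2(\lambda_i-\lambda_j)^2$ is manifestly nonnegative. The decisive algebraic ingredient is the pointwise lower bound $|C|^2 \ge \alpha |A|^2 H^4$ on $\{\lambda_1 \le -\ovl\varepsilon H\}$, obtained from the single term $i=1$, $j=n$ (using $|\lambda_1|\ge\ovl\varepsilon H$, $\lambda_n \ge H/n$, and $\lambda_n-\lambda_1\ge H/n$). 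Crucially, the tensorial Simons identity $C_{ijkl}=\tfrac12(\nabla_i\nabla_j A_{kl}+\nabla_j\nabla_i A_{kl}-\nabla_k\nabla_l A_{ij}-\nabla_l\nabla_k A_{ij})$ is \emph{purely second order}: the zeroth-order reaction terms cancel, so integration by parts against $u^2 \Phi / H^4$ produces only gradient-type and weight-type terms of exactly the shape you want on the right-hand side. Your treatment of the Gaussian weight, the $\langle\mathbf H,x^\perp\rangle$ term via the divergence of $u^2\Phi\,x^T$, the Codazzi bound $|\nabla H|^2\le n|\nabla A|^2$, and the polynomial-growth justification of the integrations by parts are all consistent with the paper; what you are missing is the passage from the scalar $\Delta|A|^2$ to the commutator $C$, which is the actual engine of the Poincar\'e-type inequality here and in the references \cite{HS99, L17, LN22} you cite.
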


\begin{proof}
    Other than through the weight $\Phi$, the result doesn't depend upon the time $t$.  So in what follows, let us write $dV = dV_t$ and suppress the time dependence. 
    
    We let $C:=A\otimes A^2 - A^2 \otimes A$ 
    and let $\lambda_i$'s be the principal curvatures at a point $p\in M$ with $\lambda_1\leq \cdots\leq \lambda_n$. 
    At $p,$ if we choose geodesic normal coordinates with respect to which $A$ is diagonalized, we can calculate, at $p,$
    \begin{align*}
	|C|^2 = \sum_{i,j=1}^n \big( A_{ii}A_{jj}^2 - A_{jj}A_{ii}^2 \big)^2 
	= \sum_{i,j=1}^n \big( \lambda_i\lambda_j^2 - \lambda_j\lambda_i^2 \big)^2
	=& \sum_{i,j=1}^n \lambda_i^2\lambda_j^2(\lambda_j-\lambda_i)^2.
    \end{align*}
    If, moreover, $p$ lies in $\set{\lambda_1\le -\ovl\varepsilon H},$ then we have, at $p,$
    \begin{align}\label{|C|-lower-bound}
    |C|^2
    \geq \lambda_n^2\lambda_1^2(\lambda_n-\lambda_1)^2
    \geq \frac{1}{n^2}H^2
    \cdot \ovl\varepsilon^2 H^2
    \cdot \pr{\frac{1}{n}+\ovl\varepsilon}^2H^2 
    \geq \alpha|A|^2H^4
    \end{align}    
    where $\alpha:=\ovl\varepsilon^2/(L^2n^4)$ and we use $\lambda_n \geq H/n$ (see \eqref{lambda_n-lower-bound}).

	On the other hand, Simons' identity implies 
	\begin{align}\label{C-Simons}
	C_{ijk\ell}
	= \frac 12\pr{
	\n_i\n_j A_{k\ell}
	+ \n_j\n_i A_{k\ell}
	- \n_k \n_\ell A_{ij}
	- \n_\ell \n_k A_{ij}
	}.
	\end{align}
    Combining \eqref{|C|-lower-bound} and \eqref{C-Simons}, we get
	\begin{align}\label{Poin-Simons}
	\alpha
	\int_{M} u^2 |A|^2 \Phi dV
	& \le \int_M \frac{u^2}{H^4} C^{ijk\ell}
	\pr{\n_i\n_j A_{k\ell}
	-\n_k \n_\ell A_{ij}}
	\Phi dV,
	\end{align}
    where we use the assumption that the support of $u$ is contained in $\set{\lambda_1\le -\ovl\varepsilon H}.$
	We then integrate by parts to get
	\begin{align}\label{Poin-int-by-parts}
	&\quad \int_M \frac{u^2}{H^4} C^{ijk\ell}
	\n_i\n_j A_{k\ell} \Phi dV\\
	& = -\int_M
	\pr{\frac{2u\n_i u}{H^4} C^{ijk\ell}
	- \frac{4u^2\n_i H}{H^5}C^{ijk\ell}
	+ \frac{u^2}{H^4} \n_i C^{ijk\ell}
	+ \frac{u^2}{H^4} C^{ijk\ell}\cdot \frac{\n_i|x|^2}{4t}
	}
	\n_j A_{k\ell}\Phi dV
	\nonumber\\
	& \le c\int_M 
	\pr{
	|u\n u| \frac{|\n A|}{H}
	+ u^2 \frac{|\n A|^2}{H^2}
	+ u^2 \frac{|\n A|}{H}\cdot \frac{\abs{x^T}}{4|t|}
	}\Phi dV,\nonumber
	\end{align}
	where we use $|C|\le cH^3,$
	$|\n C|\le cH^2|\n A|,$ and 
	$\n \Phi = \Phi\cdot \frac{\n |x|^2}{4t}$
    for some $c=c(L,n)<\infty.$
    On the other hand, we can obtain a Sobolev-type inequality by looking at the divergence (cf. \cite{E00} and~\cite[Lemma~B.1]{BW17})
	\begin{align}\label{u^2-Sobolev}
	\mathrm{div}\pr{u^2 \Phi \cdot x^T} - u^2\langle \h, x^\perp \rangle \Phi = {\rm div}\pr{
	u^2\Phi\cdot x
	}
	= \pr{nu^2
	+ 2\pair{x^T,\n u}u
	+ u^2\cdot \frac{|x^T|^2}{2t}
	}\Phi,
	\end{align}
    In fact, we claim that 
        \begin{align}\label{int-by-part-u^2}
            \int_M {\rm div}\pr{
	u^2\Phi\cdot x^T
	}dV = 0.
        \end{align}
    To see this, we use the same cutoff function as in Corollary~\ref{cor:evol-of-int-Gp}.
    Let $\varphi_R\colon \bb R^{n+1}\to\bb R$ be a smooth function such that $0\le \varphi_R\le 1,$ $\varphi|_{B_R}=1,$ $\varphi|_{\bb R^{n+1}\setminus B_{2R}}=0,$ and $|\n^{\rm Euc}\varphi_R|\le C_n/R$
	for some $C_n<\infty.$
    Integrating by part then gives
    \begin{align*}
    \abs{\int_M {\rm div}\pr{
	u^2\Phi\cdot x^T 
	} \varphi_R dV}
    = \abs{\int_M- u^2\Phi \langle x^T,\nabla \varphi_R\rangle dV}
    \leq  2C_n\Big| \int_{M\backslash B_R} u^2\Phi  dV \Big|.
    \end{align*}
    Using the fact that  $u$ and $\n u$ have polynomial growth, the claim~\eqref{int-by-part-u^2} follows from taking $R\rightarrow\infty$ and applying the dominated convergence theorem.
    
    Combining \eqref{u^2-Sobolev} and \eqref{int-by-part-u^2}, we get 
	\begin{align*}
	\int_M u^2\cdot \frac{|x^T|^2}{2|t|} \Phi dV
	&\le \int_M \pr{
	nu^2 
	+ 4|t|\cdot|\n u|^2
	+ u^2\cdot \frac{|x^T|^2}{4|t|}
	}\Phi dV
    + \int_M u^2 |\pair{\h, x^\perp}| \Phi dV.
	\end{align*}
	After rearrangement, it implies
	\begin{align*}
	\frac 1{16}
	\int_M u^2\cdot \frac{|x^T|^2}{|t|^2} \Phi dV
	\le \int_M \pr{
	\frac{n}{4|t|} u^2
	+ |\n u|^2
	}\Phi dV
    + \frac 1{4|t|} \int_M u^2 |\pair{\h, x^\perp}| \Phi dV.
	\end{align*}
    Thus, given any $b>0,$
	\begin{align*}
	\int_M u^2 \frac{|\n A|}{H}\cdot \frac{|x^T|}{4|t|} \Phi dV
	&\le \int_M \pr{
	bu^2\cdot \frac{|x^T|^2}{16|t|^2}
	+ \frac 1{4b}u^2 \frac{|\n A|^2}{H^2}
    }\Phi dV
    \\
    &\le \int_M \pr{
	\frac{bn}{4|t|}u^2
	+ b |\n u|^2
	+ \frac{b}{4|t|} u^2 |\pair{\h, x^\perp}|
	+ \frac 1{4b}u^2 \frac{|\n A|^2}{H^2}
    }\Phi dV.
	\end{align*}
	Putting this into \eqref{Poin-int-by-parts}, we get that for any $a>0,$
	\begin{align*}
	&\quad \int_M \frac{u^2}{H^4} C^{ijk\ell}
	\n_i\n_j A_{k\ell} \Phi dV\\
	& \le c\int_M 
	\pr{
		|u\n u| \frac{|\n A|}{H}
		+ u^2 \frac{|\n A|^2}{H^2}
		+ u^2 \frac{|\n A|}{H}\cdot \frac{\abs{x^T}}{4|t|}
	}\Phi dV \\
    & \le c\int_M\pr{
	\pr{a + b}|\n u|^2
	+ \pr{\frac 1{4a} + 1 + \frac 1{4b}} u^2 \frac{|\n A|^2}{H^2} 
	+  \frac{bn}{4|t|}u^2
	+ \frac{b}{4|t|} u^2 |\pair{\h, x^\perp}|
	}\Phi dV.
	\end{align*}

	The same estimate holds for the other term in the right hand side of \eqref{Poin-Simons}. Recalling our choice of $\alpha$, we thus have 
    \[
    \int_M u^2 |A|^2 \Phi dV \leq c(n, L, \bar{\varepsilon}) \int_M (a+b) |\nabla u|^2 + \left(\frac{1}{4a} + 1 + \frac{1}{4b}\right) u^2 \frac{|\nabla A|^2}{H^2} + \frac{bn}{4|t|} u^2\Big(1 + |\langle \mathbf{H}, x^\perp \rangle|^2\Big) \Phi dV.
    \]
    The result now follows by taking $a, b > 0$ sufficiently small depending upon $c(n, L, \bar{\varepsilon})$ and $\delta$.
\end{proof}

We can now put the estimates together to complete the proof of Theorem~\ref{thm:improved-conv}.

\begin{proof}
[Proof of Theorem~\ref{thm:improved-conv}]
    As in Theorem~\ref{thm:BHS-type-est}, we first deal with the case when $H$ vanishes at some point.
    In fact, the evolution equation \eqref{A-LH-evol} particularly implies
    \begin{align*}
    (\bd_t-\D)
    \pr{2LH-|A|}
    \ge 0.
    \end{align*}
    Thus, if $H(p_0, t_0)=0$ for some $(p_0,t_0)\in M\times (-\infty,0),$ then the pinching condition implies $|A|(p_0, t_0)=0$ and hence $\pr{2LH-|A|}(p_0, t_0)=0.$
    The strong maximum principle then implies $\pr{2LH-|A|}$ vanishes identically, which implies $A$ vanishes identically.
    In this case, the flow consists of static hyperplanes, and the conclusion follows.

    Thus, in the rest of the proof, we may assume $H$ and $|A|$ are both positive everywhere.
    Fix $\varepsilon\in(0, 1/L),$ and for any $\sigma\in (0,1),$ consider $G=G_{\varepsilon,\sigma}.$
    Note that when $G>0,$ we have $G_\varepsilon >0,$ so $\lambda_1\le -\varepsilon\pr{2LH - |A|}\le -\varepsilon LH.$ 
    Moreover, $G$ is of polynomial growth, and hence lies in $H^2_W$ for all~$t.$
    Thus, we can apply Proposition~\ref{prop:Poincare} with $u = v =  G^{\frac{p}{2}}$ to get
    \begin{align}\label{fp-Phi-poincare}
    \int_{M} G^p |A|^2 \Phi dV_t
    \le \int_{M}
    \pr{
    	\delta|\n v|^2
    	+ P_\delta G^p\frac{|\n A|^2}{H^2}
        + \frac{\delta}{|t|} G^p\pr{1+ |\pair{\h, x^\perp}|}
    }\Phi dV_t.
    \end{align}
    Taking $\delta=1/8$ in \eqref{fp-Phi-poincare} and plugging it into \eqref{fp-Phi-evol} leads to (remembering $G^p=v^2$)
    \begin{align*}
    \bd_t \int_{M}
    G^p \Phi dV_t
    &\le\int_{M} \pr{
    G^p\sigma p |A|^2
    - 2|\n v|^2
    - \gamma p v^2\frac{|\n A|^2}{H^2}
    - G^p\abs{\h + \frac{x^\perp}{-2t}}^2
    }
    \Phi dV_t\\
    &\quad +\pr{ 
    -\int_{M} v^2 |A|^2\Phi dV
    + \int_{M}\pr{
    	\frac 18|\n v|^2
    	+ P_{1/8} v^2 \frac{|\n A|^2}{H^2}
    	+ \frac{1}{8|t|} v^2
    	+ \frac 1{8|t|} v^2 \abs{\pair{\h, x^\perp}}
    }\Phi dV
    }\\
    & \le \int_{M} \pr{
    \pr{\sigma p -1}v^2|A|^2
    +\pr{P_{1/8}-\gamma p} v^2 \frac{|\n A|^2}{H^2}
    + \frac 1{8|t|} v^2
    }\Phi dV,
    \end{align*}
    for almost every $t < 0$, where we have used that $\abs{\h + \frac{x^\perp}{-2t}}^2
    \ge 4|\pair{\h, \frac{x^\perp}{-2t}}|.$
    Take $p$ so large that $\gamma p>P_{1/8}.$
    After fixing such $p,$ take $\sigma$ so small that $\sigma p-1=-1/2.$
    That is, $\sigma p=1/2$ and $\sigma = 1/(2p).$
    We then have
    \begin{align}\label{fp-Phi-improved-estimate}
    \quad\bd_t \int_{M}
    G^p\Phi dV_t
    & \le -\frac 12\int_{M} G^p|A|^2 \Phi dV_t
    + \frac 18\int_{M} \frac 1{|t|} G^p
    \Phi dV_t,
    \end{align}
    for almost every $t < 0$.

    We will use this to derive a differential inequality and use it to get the convexity estimate.
    First, using $G_\varepsilon\le |A|\le LH,$ we have
    \begin{align*}
    G^{p\pr{1+\frac 2{\sigma p}}}
    = G^p\cdot \pr{\frac{G_\varepsilon}{H^{1-\sigma}}}^{2/\sigma}
    \le  
    L^{\frac{2}{\sigma}} G^p H^2 \leq n L^{\frac{2}{\sigma}}G^p |A|^2
    \end{align*}
    Then using this, the entropy bound $\Lambda:= \sup_{t < 0}\lambda(M_t)  < \infty$, and H\"older's inequality, we derive
    \begin{align}\label{fp-Phi-Holder}
    \int_{M} G^p\Phi dV_t
    & \le \pr{\int_{M}\Phi dV_t}^{\frac{2}{\sigma p + 2}}
    \pr{\int_{M} G^{p\pr{1+\frac 2{\sigma p}}} \Phi dV_t}^{\frac{\sigma p}{\sigma p+2}}\\
    & \le 
    \left(\Lambda^{\frac{2}{\sigma p}} nL^{\frac{2}{\sigma}}\right)^{\frac{\sigma p}{\sigma p+2}}
    \pr{\int_{M} G^p |A|^2 \Phi dV_t}^{\frac{\sigma p}{\sigma p+2}}.\nonumber
    \end{align}
    Combining \eqref{fp-Phi-improved-estimate} and \eqref{fp-Phi-Holder} with $\sigma p = 1/2,$ we have
    \begin{align}\label{fp-Phi-ODE}
    \bd_t \int_{M}
    G^p\Phi dV_t
    & \le -C_1 \pr{\int_{M} G^p\Phi dV_t}^5
    + \frac{1}{-8t} \int_{M}
    G^p\Phi dV_t
    \end{align}
    where 
    $C_1 := 1/\pr{2n\Lambda^4 L^{4p}}.$
    
    Let 
    \begin{align*}
    J(t) 
    := (-t)^{-\frac{1}{8}}\int_{M}
    G^p\Phi dV_t.
    \end{align*}
    We claim that $J(t) = 0$ for all $t < 0$.
    Note that \eqref{fp-Phi-ODE}  implies, for almost every $t < 0$,
    \begin{align*}
        J'(t) \leq \frac{1}{8t} J(t) -C_1(-t)^{-\frac{1}{8}} \pr{\int_{M} G^p\Phi dV_t}^5
    + \frac{1}{-8t}J(t) = -C_1(-t)^{\frac{1}{2}} \pr{(-t)^{-\frac{1}{8}}\int_{M} G^p\Phi dV_t}^5,
    \end{align*}
    which means
    \begin{align}\label{J-ODE-main}
    J' \leq -C_1(-t)^{\frac{1}{2}} J^5.
    \end{align}
    Suppose, for sake of contradiction that $J_0:= J(t_0) > 0$ for some $t_0 < 0$. 
    Then for almost every $t \leq t_0$, we have 
    \[
    J'(t) \leq -C_1(-t)^{\frac{1}{2}} J^5 \leq -C_1|t_0|^{\frac{1}{2}} J(t)^5,
    \]
    which implies 
    \[
    (J(t)^{-4})' \geq 4C_0 |t_0|^{\frac{1}{2}}
    \]
    for almost every $t \leq t_0$. Integrating both sides of the differential inequality and using the absolute continuity of $J$,  we have for all $t \leq t_0$
    \[
    0 \leq J(t)^{-4} \leq J(t_0)^{-4} -4C_0 |t_0|^{\frac{1}{2}} (t_0 - t).
    \]
    Sending $t \to -\infty$ implies $J(t_0)^{-1}$ is unbounded, contradicting our assumption that $J(t_0) > 0$. Thus $J(t) = 0$ for all $t < 0$.

    We can now finish the proof of the theorem.
    Since $J=0,$ we have
    $\lambda_1 \ge -\varepsilon(2LH-|A|).$
    Since this pointwise estimate is true for any $\varepsilon\in (0,1/L),$ it implies that $\lambda_1\ge 0.$
    This completes the proof.
\end{proof}


\section{\bf Consequences of planarity and convexity estimates}
\label{sec:corollary}

We provide the proofs of Corollaries~\ref{cor:BC-ADS-HS-LN} and~\ref{cor:cyl-shrinker} in this section.

The proof of Corollary~\ref{cor:BC-ADS-HS-LN} is based on a recent work of Lee--Topping~\cite{LT22-PIC}, which could be used to generalize a compactness criterion in \cite{LN21}.
We state it as follows.

\begin{prop}
	\label{prop:n-1-pinched-compact}
    Let $n\ge 3$ and $M$ be an $n$-dimeniosnal immersed submanifold in $\bb R^N$ with 
	\begin{align*}
	|A|^2 \le \pr{\frac 1{n-1}-\varepsilon_0}|H|^2
	\end{align*}
	for some $\varepsilon_0>0.$
	Then $M$ is either flat or compact.
\end{prop}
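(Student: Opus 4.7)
The strategy is to convert the extrinsic pinching into an intrinsic curvature positivity on the induced metric, and then invoke the recent result of Lee--Topping~\cite{LT22-PIC}.

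First, observe that the pinching forces a simple dichotomy at the pointwise level: if $H(p) = 0$ at some $p$, then $|A|^2(p) \le (\frac{1}{n-1}-\varepsilon_0)|H(p)|^2 = 0$, so $A(p) = 0$. We separate cases: if $A$ vanishes identically, then $M$ is a union of affine $n$-planes, hence flat. Otherwise, we work on the (nonempty) open set where $H \neq 0$, where we set $\nu_1 := H/|H|$ and decompose $A = h\nu_1 + \hat{A}$ with $h$ scalar-valued as in Section~\ref{sec:Planarity}.

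Second, and this is the main step, we claim the induced metric on $M$ has uniformly positive isotropic curvature. Concretely, for any orthonormal $4$-frame $\{e_1,e_2,e_3,e_4\}\subset T_pM$, we aim to prove
\begin{align*}
\mathcal{I} := R_{1313} + R_{1414} + R_{2323} + R_{2424} - 2R_{1234} \ge c(n,\varepsilon_0)\,|H|^2,
\end{align*}
where $R$ is the intrinsic Riemann tensor. Expanding $\mathcal{I}$ via the Gauss equation $R_{ijkl}=\langle A_{ik},A_{jl}\rangle-\langle A_{il},A_{jk}\rangle$ and using the orthogonal splitting $A = h\nu_1 + \hat{A}$ yields a quadratic form in $(h,\hat{A})$. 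The key algebraic fact is that pinching $|A|^2 \le (\tfrac{1}{n-1}-\varepsilon_0)|H|^2$ combined with $\operatorname{tr} h = |H|$ forces $h$ to be uniformly $2$-positive: a standard Lagrange-multiplier-type eigenvalue argument (following the pattern used for $k$-convexity in \cite{HS99}) yields $\lambda_1(h)+\lambda_2(h) \ge c(n,\varepsilon_0)|H|$. The contribution of $h$ to $\mathcal{I}$ factors as $(\lambda_{i_1}(h)+\lambda_{i_2}(h))(\lambda_{i_3}(h)+\lambda_{i_4}(h))$ modulo terms controlled by $|\hat A|$, which is itself bounded by the pinching. Choosing constants carefully and using Cauchy--Schwarz to absorb the $\hat A$-cross-terms into the positive $h$-contribution gives the desired uniform PIC lower bound.

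Third, apply the main result of Lee--Topping~\cite{LT22-PIC} to conclude. Their theorem implies that a complete, connected Riemannian manifold of dimension $n\ge 3$ whose metric has uniformly positive isotropic curvature must be compact. Applied to $(M,g)$ with its induced metric, this yields compactness in the non-flat case, completing the dichotomy.

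The main obstacle is the algebraic step in the middle paragraph. While the codimension-one analogue of the implication ``pinching $\Rightarrow$ uniform PIC'' is a direct consequence of $2$-convexity, in higher codimension the Gauss tensor mixes $h$ with $\hat A$, and one must show that the possibly indefinite cross-terms involving $\hat A$ are dominated by the positive quadratic form in $h$ provided by its uniform $2$-positivity. This is essentially an algebraic lemma on symmetric bilinear forms with values in $\mathbb{R}^{N-n}$, and once it is established the remainder of the proof is a citation.
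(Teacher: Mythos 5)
You have the right top-level idea — convert the extrinsic pinching into an intrinsic curvature positivity and invoke Lee--Topping \cite{LT22-PIC} — but the middle step as written contains a genuine gap, both in the target condition and in the claimed estimate.

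The result you need from \cite{LT22-PIC} is their Theorem~1.2, which is about a scale-invariant \emph{PIC1}-pinching: one must show $R - \varepsilon\, S\cdot I \in \mathrm{C}_{\mathrm{PIC1}}$ for some fixed $\varepsilon>0$, where $\mathrm{C}_{\mathrm{PIC1}}$ is the cone of algebraic curvature tensors $T$ such that $T\times 0$ has nonnegative isotropic curvature on $M\times\mathbb{R}$. Your plan instead estimates the isotropic curvature $\mathcal{I}$ of $M$ itself — this is the PIC condition, which is strictly weaker than PIC1 (a PIC1-pinched manifold has, e.g., positive Ricci curvature, while a PIC manifold need not). Also, the orthonormal $4$-frame formulation of $\mathcal{I}$ is empty when $n=3$, so your argument does not cover the lower end of the stated range $n\ge 3$. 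And the phrase ``uniformly positive isotropic curvature'' is a misnomer for the bound $\mathcal{I} \ge c(n,\varepsilon_0)|H|^2$: this is a pinching relative to $|H|^2$ (equivalently relative to $S$, since $S = |H|^2 - |A|^2$ and $\varepsilon_0|H|^2 \le S \le |H|^2$ by Gauss), not uniform positivity.

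The paper gets all of this in one stroke by citing \cite[Lemma~3.2]{LN21}, which shows that the pinching $|A|^2 \le (\tfrac{1}{n-1}-\varepsilon_0)|H|^2$ already forces the \emph{curvature operator} of the induced metric to satisfy $\mathcal{R} \ge \tfrac{\varepsilon_0}{2}|H|^2\cdot\mathrm{Id}$. Since $S = |H|^2 - |A|^2 \le |H|^2$, one immediately gets $\mathcal{R} - \tfrac{\varepsilon_0}{2} S\cdot\mathrm{Id} \ge \tfrac{\varepsilon_0}{2}|A|^2 \cdot \mathrm{Id} \ge 0$; a nonnegative curvature operator lies in $\mathrm{C}_{\mathrm{PIC1}}$, so the pinched PIC1 condition holds, with no $n=3$ exceptional case and no frame-by-frame computation. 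Your Gauss-equation-plus-$2$-positivity-of-$h$ computation would have to be redone targeting the isotropic curvature of $M\times\mathbb{R}$ (or more simply, reproduce the curvature-operator bound), and you would need to verify the absorption of the $\hat A$-cross-terms — precisely the algebraic step you flag as the main obstacle, and which \cite[Lemma~3.2]{LN21} has already done for you at the level of the full curvature operator.
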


This proposition removes the bounded  curvature assumption in \cite[Corollary~3.3]{LN21}.
Therefore, as mentioned in \cite{LN21}, Proposition~\ref{prop:n-1-pinched-compact} can be viewed as the high codimensional version of Hamilton's theorem for pinched hypersurfaces~\cite{Ham94}.
The proof technique is the same as that of \cite[Corollary~3.3]{LN21}, but now we use a theorem of Lee--Topping, which is a generalization of the result by Ni--Wu~\cite{NW} used in \cite{LN21}.

\begin{proof}
	[Proof of Proposition \ref{prop:n-1-pinched-compact}]
	Based on the pinching condition, \cite[Lemma~3.2]{LN21} implies
	\begin{align}\label{PCO-pinched}
	\mathcal{R}\ge \frac{\varepsilon_0}2 |H|^2 \cdot {\rm Id} \geq 0,
	\end{align}
	where $\mathcal{R}$ is the curvature operator of the induced metric $g$ on $M$ and $\rm Id$ is the identity map on $\Lambda^2$. 
    In particular, the induced metric $g$ on $M$ has non-negative complex sectional curvature. 
    
    By the Gauss equation, if we let $S$ be the scalar curvature of $g,$ then \eqref{PCO-pinched} implies
	\begin{align*}
	\mathcal{R} - \frac{\varepsilon_0}2S\cdot {\rm Id}
	\ge \frac{\varepsilon_0}2 |H|^2 {\rm Id}
	- \frac{\varepsilon_0}2\pr{|H|^2-|A|^2}\cdot {\rm Id}
	= \frac{\varepsilon_0}2 |A|^2\cdot {\rm Id}.
	\end{align*}
    Now, we use $R$ to denote the curvature tensor of $g$ and $I$ to denote the identity curvature tensor, that is, $I_{ijk\ell} = \delta_{ik}\delta_{j\ell}
	- \delta_{i\ell}\delta_{jk}.$ 
    Then
	\begin{align*}
	R - \frac{\varepsilon_0}2S\cdot I
    \in {\rm C}_{\rm PIC1}
	\end{align*}
    where ${\rm C}_{\rm PIC1}$ is the PIC1 cone.
	Therefore, \cite[Theorem~1.2]{LT22-PIC} implies that $M_t$ is either flat or compact.
\end{proof}

As mentioned, Proposition~\ref{prop:n-1-pinched-compact} doesn't have any curvature restriction.
Thus, we can apply it to the setting of Corollary~\ref{cor:BC-ADS-HS-LN}.

\begin{proof}
[Proof of Corollary~\ref{cor:BC-ADS-HS-LN}]
	For (1), since the uniformly $\frac 1{n-2}$-pinched condition implies uniform two-convexity, the result directly follows from Corollary~\ref{cor:complete-bdd-curv-pinched}, Corollary~\ref{cor:convex-then-noncollapsing}, and the main results in \cite{BC21, ADS20}.

	For (2), by Proposition~\ref{prop:n-1-pinched-compact}, $M_t$ is either compact or flat.
	If $M_t$ is compact, then the result follows from \cite[Theorem~1.1]{LN21}.
	If $M_t$ is flat, then it is identically flat for all $t.$

    For (3), when $n\in\set{3,4},$ using
    \begin{align*}
    \frac{4}{3n}\le\frac 1{n-1},
    \end{align*}
    we can apply Proposition~\ref{prop:n-1-pinched-compact} to conclude that $M_t$ is either compact or flat, and the result follows as in (2).
    This is why we can slightly improve the constant from $c_n$ to $4/3n$ when $n=3$ or $4.$
    When $n=2,$ by Corollary~\ref{cor:complete-bdd-curv-pinched} and Corollary~\ref{cor:convex-then-noncollapsing}, $M_t$ is convex and non-collapsed.
    Since
    \begin{align*}
    \frac{3(n+1)}{2n(n+2)}
    = \frac 9{16}
    < 1
    = \frac 1{n-1},
    \end{align*}
    it is either flat or a uniformly strictly convex MCF.
    Thus, applying the results in \cite{BC19, ADS20} and noting that the shrinking sphere is the only uniformly strictly convex solution in the list finishes the proof.
\end{proof}

\begin{proof}
	[Proof of Corollary~\ref{cor:cyl-shrinker}]
	Let $\Sigma$ be a shrinker satisfying the assumption of Corollary~\ref{cor:cyl-shrinker}.
	The bounded entropy assumption implies \eqref{eq:volume-bounds}.
	To see  \eqref{eq:curvature-int-bounds}, we first note that the shrinker equation and the pinching condition imply that $|A|$ is of polynomial growth.
	Thus, Lemma~\ref{lem:A-poly-imply-nA-poly} and Remark~\ref{rmk:A-poly-high-codim} imply that $|\n^\perp A|$ and $|(\n^\perp)^2A|$ have polynomial growth.
	In particular, we have
	\begin{align}\label{shrinker-bdd-weighted-curvature}
	\int_\Sigma 
	\pr{|A|^2 + |\nabla^\perp A|^2 + |(\nabla^\perp)^2A|^2}
	e^{-\frac{|x|^2}4}<\infty,
	\end{align}
    which implies \eqref{eq:curvature-int-bounds} since the MCF $M_t:=\sqrt{-t}\Sigma$ generated by $\Sigma$ satisfies
	\begin{align*}
	\bd_t A_{M_t}
	= \bd_t \pr{\frac 1{\sqrt{-t}} A_\Sigma} + \ppair{\nabla A_{M_t},-\frac{x^T}{-2t}}
	= \frac 1{2(-t)^{3/2}} \pr{A_\Sigma -  \ppair{\nabla A_\Sigma, x^T}}
	\end{align*}
	where the second term comes from the tangential motion besides the scaling so that $\bd_t F$ is in the normal direction.
	Hence, Corollary~\ref{cor:complete-bdd-curv-pinched} implies that $\Sigma$ is a hypersurface.
	
	Next, Theorem~\ref{thm:improved-conv} implies that $\Sigma$ is convex.
	Strong maximum principle implies $\Sigma = \td\Sigma^k\times\bb R^{n-k}$ for some $k$ where $\td\Sigma^k$ is a strictly convex hypersurface.
    If $k=1,$ then $\td \Sigma$ is an Abresch--Langer curve.
	This forces $|A|=|H|,$ and the pinching condition is violated.
	If $k\ge 2,$ then Sacksteder's theorem~\cite{S60} implies that $\td\Sigma$ is embedded. 
	We can then apply the classification result  \cite[Theorem~0.17]{CM12} to conclude that $\Sigma$ is a generalized cylinder.
\end{proof}


\section{\textbf{Some open questions on higher codimension $c\,$-pinched MCF}}
\label{sec:Question}

In this section, we offer several questions to the community on the subject of $c$-pinched MCF in higher codimension. 

\begin{ques}
In dimension $n \geq 2$, what is the largest constant $c = c(n)$ such that every ancient, uniformly $c$-pinched solution of MCF is codimension one? 

See Appendix \ref{app:planarity} for a summary of how the constant $c_n = \min\{\frac{4}{3n}, \frac{3(n+1)}{2n(n+2)}\}$ arises in the proof of the planarity estimate. It is not known if the constant is sharp (but it seems unlikely). Obtaining the sharp constant is probably most tractable in either low dimensions or sufficiently high dimensions. In low dimensions, modified pinching conditions that involve the normal curvature and known examples (e.g. the Veronese surface and Clifford torus shrinkers) provide some clues about the sharpness of $c$-pinching \cite{BN}. In all sufficiently large dimensions, the question is somewhat analogous to an asymptotically sharp gap theorem for codimension one shrinkers. 
\end{ques}

\begin{ques}
It is possible to develop a MCF with surgery on for closed $\tilde{c}_2$-pinched solutions of MCF, where $\tilde{c}_2
:= \min \set{\frac{1}{n-2}, c_n} 
= \min \set{\frac{1}{n-2}, \frac{3(n+1)}{2n(n+2)}}$? 

These $\tilde{c}_2$-pinched flows are much like higher codimension analogues of two-convex flows. In particular, the blow-up limits of such flows are classified and must be the codimension, noncollapsed, two-convex ancients solutions. Two types of surgeries are possible: one akin to Perelman's surgery process for the Ricci flow \cite{Pe03} and the other based upon the surgery of Huisken--Sinestari \cite{HS09} (as envisioned by Hamilton \cite{Ham97}). This question has been studied by the second author in \cite{Na23}, where a canonical neighborhood theorem \`a la Perelman was developed to set the stage for a possible Perelman-type surgery. In \cite{Ngu20}, Nguyen developed many of the tools required for a Huisken--Sinestrari type surgery procedure. However, the key missing ingredient - in both approaches - is a way to preserve the planarity estimate in \cite{Na22} across surgery times. 
\end{ques}

\begin{ques} 
Is it possible to prove Theorem \ref{thm:BHS-type-est} and Theorem~\ref{thm:improved-conv} without assuming any curvature growth bounds? 
\end{ques}

Under a pinching condition, significant progress has been made on existence results for Ricci flow, even in the absence of curvature growth restrictions \cite{LT22, LT22-PIC}.
The classification theorem of~\cite{BHS} was further generalized by Yokota~\cite{Y17}, who removed the boundedness assumption on curvature.
To the best of the authors' knowledge, no analogous results are currently known in the setting of mean curvature flow.


\appendix
\section{A gradient estimate in non-convex regions}
\label{sec:app-grad-est}

The estimate in this appendix was proven in a general context in \cite{L17}. For completeness, we reprove it here for our setting.
The following lemma exploits the idea that the nonnegative difference $|\nabla A|-|\nabla |A||$ can only vanish under somewhat special conditions, like for hypersurfaces which are cylinders over curves, or when $A$ is constant. Heuristically, a condition that rules these cases should imply a definite lower bound. We exploit this idea whenever $\lambda_1 \leq -\varepsilon_0 H <0$ for $\varepsilon_0 > 0$ to obtain a useful lower bound in the proof of the convexity estimate above. 
We follow the notations in Section~\ref{sec:conv}.

\begin{lemma}
[cf. {\cite[Lemma 2.1]{L17}}]
\label{Appendix-grad-lower-bound}
On a smooth hypersurface $M,$ suppose that the smallest principal curvature $\lambda_1$ satisfies $\lambda_1 \leq -\varepsilon_0 H<0$ at a point $p\in M$ for some $\varepsilon_0 \in(0,1).$ 
Then at $p,$ we have
\begin{align*}
    {|\nabla A|^2} - |\nabla |A||^2  
    = \frac{|A\otimes \nabla A - \nabla A\otimes A|^2}{2|A|^2}
    \geq \frac{\varepsilon_0^2}{8n^2}
    \frac{|\nabla A|^2H^2}{|A|^2}. 
\end{align*}
\end{lemma}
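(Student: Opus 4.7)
The proof splits into two parts: the equality is a pointwise tensor identity, and the inequality follows from Lagrange's identity combined with Codazzi's symmetry and the hypothesis $\lambda_1\le -\varepsilon_0 H$.

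For the equality, I would start from $\nabla_k|A|^2 = 2\langle A,\nabla_k A\rangle$, which yields $|\nabla |A||^2 = \sum_k \langle A,\nabla_k A\rangle^2 / |A|^2$ wherever $|A|>0$. Multiplying through by $|A|^2$,
\[
|A|^2 \bigl( |\nabla A|^2 - |\nabla |A||^2 \bigr)
= \sum_k \bigl( |A|^2 |\nabla_k A|^2 - \langle A, \nabla_k A\rangle^2 \bigr).
\]
The right-hand side is recognized as $\tfrac12 |A\otimes \nabla A - \nabla A\otimes A|^2$ via the componentwise identity $|S\otimes T - T\otimes S|^2 = 2(|S|^2|T|^2 - \langle S,T\rangle^2)$ applied with $S=A$ and $T=\nabla_k A$ and summed over $k$.

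For the inequality, I would work in geodesic normal coordinates at $p$ that diagonalize $A$, so $A_{ij} = \lambda_i\delta_{ij}$ with $\lambda_1\le \cdots \le \lambda_n$. The hypothesis $\lambda_1\le -\varepsilon_0 H$ immediately yields $|A|^2 \ge \lambda_1^2 \ge \varepsilon_0^2 H^2$, and since $H=\sum_i \lambda_i>0$ forces $\lambda_n \ge H/n$, the eigenvalue gap satisfies $\lambda_n - \lambda_1 \ge \varepsilon_0 H + H/n$. Setting $T_{abc}:=\nabla_c A_{ab}$, which is totally symmetric by Codazzi, Lagrange's identity applied to the diagonal entries gives, for each fixed $k$,
\[
|A|^2 |\nabla_k A|^2 - \langle A,\nabla_k A\rangle^2 = \sum_{i<j}(\lambda_i T_{jjk} - \lambda_j T_{iik})^2 + |A|^2 \sum_{i\ne j} T_{ijk}^2.
\]
Retaining only the $(i,j)=(1,n)$ Lagrange term and applying the elementary inequality $(a-b)^2 \ge \tfrac12 a^2 - b^2$ furnishes a bound of size $\tfrac{\lambda_n^2}{2}\sum_k T_{11k}^2 - \lambda_1^2\sum_k T_{nnk}^2$, whose first piece is of order $H^2/n^2$ times diagonal derivatives. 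Together with the off-diagonal contribution $|A|^2 \sum_{k,i\ne j}T_{ijk}^2$ and the Codazzi symmetry $T_{iik}=T_{kii}$ (which identifies each diagonal derivative $\nabla_k A_{ii}$ with the off-diagonal entry $\nabla_i A_{ki}$ appearing in that off-diagonal piece), every term of $|\nabla A|^2$ can be dominated by a fragment of the lower bound.

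The main obstacle is the combinatorial bookkeeping: because of the total symmetry of $T$, the notions of ``diagonal'' and ``off-diagonal'' entries depend on which of the three indices is singled out, and the same $T_{abc}$ contributes to both $\sum_{k,i}T_{iik}^2$ and $\sum_{k,i\neq j}T_{ijk}^2$ after relabeling. One must carefully partition terms to avoid double counting while still absorbing the subtracted $\lambda_1^2 \sum_k T_{nnk}^2$ into a controllable portion of the off-diagonal piece using $|A|^2 \ge \varepsilon_0^2 H^2$. After this bookkeeping, the explicit constant $\tfrac{\varepsilon_0^2}{8n^2}$ emerges from the combination of $\lambda_n^2 \ge H^2/n^2$, the factor $\tfrac12$ from the $(a-b)^2$ inequality, and an additional $\varepsilon_0^2$ needed to overwhelm the error term $\lambda_1^2\sum_k T_{nnk}^2$.
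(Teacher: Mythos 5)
Your derivation of the equality is correct and matches the paper's first step in spirit. For the inequality you take a genuinely different route from the paper: you decompose $|A|^2|\nabla_k A|^2 - \langle A,\nabla_k A\rangle^2$ per coordinate direction $k$ via Lagrange's identity, while the paper works directly with the components of $C = A\otimes\nabla A - \nabla A\otimes A$ and a three-way case analysis on $(p,q,k)$ using Codazzi. Your identity
\[
|A|^2 |\nabla_k A|^2 - \langle A,\nabla_k A\rangle^2 = \sum_{i<j}(\lambda_i T_{jjk} - \lambda_j T_{iik})^2 + |A|^2 \sum_{i\ne j} T_{ijk}^2
\]
is correct, and the strategy is not unreasonable. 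However, as written there is a genuine gap, and the ``bookkeeping'' you flag is not merely tedious but fails for your specific choice.

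The problem is the fully diagonal derivatives $T_{iii} = \nabla_i A_{ii}$. Retaining only the $(1,n)$ Lagrange term, after $(a-b)^2 \ge \tfrac12 a^2 - b^2$, gives control over $\sum_k T_{11k}^2$ alone, minus an error $\lambda_1^2\sum_k T_{nnk}^2$. The off-diagonal piece $|A|^2\sum_{k,\,i\ne j} T_{ijk}^2$ contains every $T_{abc}^2$ that has at least two distinct indices (via Codazzi relabeling), but it contains no $T_{iii}^2$ whatsoever: a totally diagonal index triple cannot be rewritten with two distinct indices in the first two slots. Consequently $T_{222}^2,\dots,T_{nnn}^2$ are simply not bounded by anything you have retained, and the subtracted $\lambda_1^2 T_{nnn}^2$ (the $k=n$ piece of your error term) has nothing positive to absorb it. So your lower bound can be made negative, e.g.\ by choosing $\nabla A$ supported only on $T_{nnn}$. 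This is not a constant-chasing problem; it is a missing term.

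To repair your argument along the same lines you would need to retain more Lagrange pairs — for instance, the $(1,j)$ pair with $k = j$ produces $(\lambda_1 T_{jjj} - \lambda_j T_{11j})^2 \ge \tfrac12\lambda_1^2 T_{jjj}^2 - \lambda_j^2 T_{11j}^2$, which controls $T_{jjj}^2$ using $\lambda_1^2 \ge \varepsilon_0^2 H^2$, with an error absorbable into the off-diagonal sum since $T_{11j}$ (with $j\neq 1$) appears there with multiplicity $2$. This plausibly closes the gap, but it is not what you wrote. The paper avoids the whole issue by proving, for each index $i$, the uniform bound $A_{ii}^2\bigl(|\nabla A|^2 - |\nabla_i A_{ii}|^2\bigr) \le 4\,|A\otimes\nabla A - \nabla A\otimes A|^2$, via a case split on $(p,q,k)$ in which the troublesome case $p=q=k\neq i$ is handled by a triangle inequality together with the Codazzi rewriting $\nabla_k A_{ii} = \nabla_i A_{ik}$ (and $A_{ik}=0$), not by a Lagrange split. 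Summing the $i=1$ and $i=n$ instances then recovers the full $|\nabla A|^2$ because what is excluded, namely $T_{111}^2$ and $T_{nnn}^2$, is complementary. This is the structural idea your sketch is missing.
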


\begin{proof}

    Working in local coordinates, we compute 
    \begin{align*}
    |A \otimes \nabla A - \nabla A \otimes A|^2 
    = 2|A|^2 |\nabla A|^2 
    - 2 \sum_{i,j,k,p,q=1}^n \pr{A_{ij} \nabla_k A_{pq}} \pr{\nabla_i A_{jk} \cdot A_{pq}}.
    \end{align*}
    By the Codazzi identity, 
    $$\sum_{i,j =1}^n A_{ij} \nabla_i A_{jk} 
    = \sum_{i,j=1}^n A_{ij} \nabla_k A_{ij} 
    = \frac{1}{2} \nabla_k |A|^2 
    = \sum_{p,q=1}^n A_{pq} \nabla_k A_{pq}.$$
    Therefore, 
    \begin{align*}
    |A \otimes \nabla A - \nabla A \otimes A|^2 
    = 2 |A|^2 |\nabla A|^2 -\frac{1}{2} |\nabla |A|^2|^2
    = 2|A|^2 \pr{|\nabla A|^2 -|\nabla |A||^2}. 
    \end{align*}
    This proves the first equality, which is generally true without the assumptions in the lemma.
    Note that here, the negative upper bound of $\lambda_1$ implies that $|A|>0.$

	Next, we choose geodesic normal coordinates that diagonalize $A$ at $p$, namely $A_{ij} = \lambda_i\delta_{ij}$. 
    Note that 
    \begin{align*}
    	|\nabla A|^2|A|^2 = \sum_{i,j,p,q,k=1}^n A_{ij}^2 \cdot |\nabla_k A_{pq}|^2.
    \end{align*}
    Since $A_{ij} = 0$ when $i\neq j$, we can simplify it into
    \begin{align*}
    	|\nabla A|^2|A|^2 = \sum_{i=1}^n \Big(A_{ii}^2\sum_{p,q,k=1}^n  |\nabla_k A_{pq}|^2\Big).
    \end{align*}

\begin{claim}\label{claim-A}
In normal coordinates that diagonalize $A$, we have
\begin{align}\label{Appendix-ineqn-11}
	 |A\otimes \nabla A - \nabla A\otimes A|^2 \geq \frac{1}{4}\sum_{i=1}^n A_{ii}^2 \pr{ - |\nabla_i A_{ii}|^2 + \sum_{p,q,k=1}^n   |\nabla_k A_{pq}|^2}.
\end{align}
\end{claim}
\begin{proof}[Proof of Claim~\ref{claim-A}]
To obtain the estimate, we express $|\nabla A|^2$ as a sum of three parts,
\[
\sum_{p,q,k =1}^n |\nabla_k A_{pq}|^2 = \sum_{\substack{1\leq k\leq n\\1\leq p\neq q\leq n}}|\nabla_k A_{pq}|^2 + \sum_{1 \leq k \neq p\leq n} |\nabla_k A_{pp}|^2 + \sum_{k =1}^n |\nabla_k A_{kk}|^2,
\]
and estimate each separately.
 
When $p\neq q$, $A_{pq} = 0$ and hence we have
$|A_{ii} \nabla_kA_{pq}| = |A_{ii} \nabla_kA_{pq} - A_{pq}\nabla_k A_{ii}|.$
Therefore,
\begin{align}\label{A-nablaA-inequality-1}
	\sum_{i=1}^n\sum_{\substack{1\leq k\leq n\\1\leq p\neq q\leq n}} A_{ii}^2 \cdot |\nabla_k A_{pq}|^2 = \sum_{i=1}^n\sum_{\substack{1\leq k\leq n\\1\leq p\neq q\leq n}} |A_{ii} \nabla_kA_{pq} - A_{pq}\nabla_k A_{ii}|^2 \leq  |A\otimes \nabla A - \nabla A\otimes A|^2.
\end{align}

Next, we assume that $p=q$ and $p \neq k$. Then $A_{kp} = 0$. Applying the Codazzi equation, we get
\begin{align*}
	|A_{ii}\nabla_k A_{pp}| = |A_{ii}\nabla_p A_{kp}| = |A_{ii} \nabla_p A_{kp} - A_{kp}\nabla_p A_{ii}|.
\end{align*}
Hence,
\begin{align}\label{A-nablaA-inequality-2}
	\sum_{i=1}^n\sum_{1\leq k\neq p\leq n} A_{ii}^2 \cdot |\nabla_k A_{pp}|^2 = \sum_{i=1}^n\sum_{1\leq k\neq p\leq n}  |A_{ii} \nabla_p A_{kp} - A_{kp}\nabla_p A_{ii}|^2 \leq  |A\otimes \nabla A - \nabla A\otimes A|^2.
\end{align}

Finally, we deal with the case when $k=p=q$. If $k\neq i$, then $A_{ik}= 0$. 
Using the Codazzi equation, we have
\begin{align*}
	|A_{ii}\nabla_k A_{kk} | 
	\leq& |A_{ii}\nabla_k A_{kk} - A_{kk}\nabla_k A_{ii}| +  |A_{kk}\nabla_k A_{ii}| \\
	=& |A_{ii}\nabla_k A_{kk} - A_{kk}\nabla_k A_{ii}| +  |A_{kk}\nabla_i A_{ik}|\\
	=& |A_{ii}\nabla_k A_{kk} - A_{kk}\nabla_k A_{ii}| + |A_{kk}\nabla_i A_{ik} - A_{ik}\nabla_i A_{kk}|.
\end{align*}
Consequently,
\begin{align}\label{A-nablaA-inequality-3}
	\sum_{i=1}^n \sum_{\substack{1\leq k\leq n\\ k\neq i}} A_{ii}^2 \cdot |\nabla_k A_{kk}|^2 \leq & \sum_{i=1}^n\sum_{\substack{1\leq k\leq n\\ k\neq i}} |A_{ii} \nabla_k A_{kk}|^2 \\
	\leq&  \sum_{i=1}^n\sum_{\substack{1\leq k\leq n\\ k\neq i}} 2|A_{ii}\nabla_k A_{kk} - A_{kk}\nabla_k A_{ii}|^2 + 2|A_{kk}\nabla_i A_{ik} - A_{ik}\nabla_i A_{kk}|^2 \nonumber\\
	\leq& 2|A\otimes \nabla A - \nabla A\otimes A|^2.\nonumber
\end{align}

Combining \eqref{A-nablaA-inequality-1}, \eqref{A-nablaA-inequality-2} and \eqref{A-nablaA-inequality-3}, we have
\begin{align*}
	&\sum_{i=1}^n A_{ii}^2\Big( - |\nabla_i A_{ii}|^2 + \sum_{p,q,k=1}^n   |\nabla_k A_{pq}|^2\Big) \\
	=& \sum_{i=1}^n A_{ii}^2 \Bigg(\sum_{\substack{1\leq k\leq n\\1\leq p\neq q\leq n}}    |\nabla_k A_{pq}|^2 +  \sum_{1\leq k\neq p\leq n}   |\nabla_k A_{pp}|^2+  \sum_{\substack{1\leq k\leq n\\ k\neq i}}    |\nabla_k A_{kk}|^2 \Bigg) \\ 
	\leq& 4|A\otimes \nabla A - \nabla A\otimes A|^2.
\end{align*}
This proves the claim.
\end{proof}

By Claim~\ref{claim-A}, to finish the proof of Lemma~\ref{Appendix-grad-lower-bound}, it remains to estimate 
$$\sum_{i=1}^n A_{ii}^2\Big( - |\nabla_i A_{ii}|^2 + \sum_{p,q,k=1}^n   |\nabla_k A_{pq}|^2\Big) $$
from below. 
Denote $\lambda_i = -A_{ii}$ to be the principal curvatures at $p$ and assume that $\lambda_1\leq \cdots\leq \lambda_n$. 
Then, by assumption of mean-convexity, we have
\begin{align}\label{lambda_n-lower-bound}
	\lambda_n \geq \frac{\sum_{i=1}^n\lambda_i}{n} = \frac{H}{n} \geq 0.
\end{align} 
Hence,
\begin{align}\label{Appendix-ineqn-12}
	&\sum_{i=1}^n A_{ii}^2 \pr{- |\nabla_i A_{ii}|^2 + \sum_{p,q,k=1}^n   |\nabla_k A_{pq}|^2} \\
    \geq&  \lambda_n^2 \pr{ - |\nabla_n A_{nn}|^2 + \sum_{p,q,k=1}^n   |\nabla_k A_{pq}|^2} 
    + \lambda_{1}^2 \pr{- |\nabla_{1} A_{11}|^2 + \sum_{p,q,k=1}^n   |\nabla_k A_{pq}|^2} \nonumber\\
    \geq& \frac{H^2}{n^2}\pr{ - |\nabla_n A_{nn}|^2 + \sum_{p,q,k=1}^n   |\nabla_k A_{pq}|^2} 
    + \varepsilon_0^2H^2 \pr{- |\nabla_{1} A_{11}|^2  + \sum_{p,q,k=1}^n   |\nabla_k A_{pq}|^2} \nonumber\\
    \geq& \frac{\varepsilon_0^2H^2}{n^2} \pr{- |\nabla_n A_{nn}|^2 - |\nabla_{1} A_{11}|^2 + 2\sum_{p,q,k=1}^n   |\nabla_k A_{pq}|^2} \nonumber\\
    \geq& \frac{\varepsilon_0^2}{n^2} H^2|\nabla A|^2 .\nonumber
\end{align}
Note that the estimate would fail if $\varepsilon_0$ were zero, because we would be unable to bound the component $|\nabla_nA_{nn}|^2$ of $|\nabla A|^2$. By \eqref{Appendix-ineqn-11} and \eqref{Appendix-ineqn-12}, we get
\begin{align*}
	|A\otimes \nabla A - \nabla A\otimes A|^2 \geq \frac{1}{4}\sum_{i=1}^n A_{ii}^2\Big( - |\nabla_i A_{ii}|^2 + \sum_{p,q,k=1}^n   |\nabla_k A_{pq}|^2\Big) \geq  \frac{\varepsilon_0^2}{4n^2}H^2 |\nabla A|^2 .
\end{align*}
The lemma then follows.
\end{proof}

\section{Dimensional constants that arise in the planarity estimate}\label{app:planarity}

In this appendix, we review how the constant $c_n$ arises in the proof of the planarity estimate. 
In particular, we summarize what estimates go into the proof and show the planarity estimate holds for $n \geq 2$. 
We use the notation introduced in Section~\ref{sec:Planarity}.

In what follows, consider a constant $c_0 > \frac{1}{n}$. Let 
\[
f := c_0 |H|^2 - |A|^2. 
\]
Assume that $|H| > 0$ so that we may write $A = h \nu_1 + \hat{A}$ and $h = \mathring{h} + \frac{1}{n} g$. Recall $R^\perp(\nu_1) = \mathring{h}_{ik} \hat{A}_{jk} - \mathring{h}_{jk} \hat{A}_{ik}$ and $\hat{R}^\perp = \hat{A}_{ik} \otimes \hat{A}_{jk} - \hat{A}_{jk} \otimes \hat{A}_{ik}$.  Recall also the definitions (from \cite{Na22}) of first-order expression
\[
Q_{ijk} := \langle \nabla^\perp_k \mathring{A}_{ij}, \nu_1 \rangle -\langle \nabla^\perp_k \hat{A}_{ij}, \nu_1 \rangle - |H|^{-1} \mathring{h}_{ij} \nabla_k |H|
\]
and the non-principal part of the normal connection $\hat{\nabla}^\perp \hat{A} := \nabla^\perp \hat{A} -\langle \nabla^\perp \hat{A} , \nu_1\rangle \nu_1$.

\subsection{The constant $\frac{4}{3n}$ and the preservation of $c$-pinching}

Direct computations (that can obtained, for instance, from the estimates in \cite{Na22}) show that
\begin{align*}
    (\partial_t - \Delta) f = 2(c_0 |\langle A, H \rangle|^2 - |\langle A, A\rangle|^2 - |R^\perp|^2) + 2(|\nabla^\perp A|^2 - c_0 |\nabla^\perp H|^2).
\end{align*}
When one has $|H| > 0$, then  
\begin{align*}
    c_0 |\langle A, H \rangle|^2 - |\langle A, A\rangle|^2 - |R^\perp|^2 & = \frac{2}{nc_0-1} |\hat{A}|^2f + \frac{nc_0}{nc_0 - 1} |\mathring{h}|^2 f + \frac{1}{nc_0-1} f^2 \\
    & \quad + 2\left( 2|\mathring{h}|^2 |\hat{A}|^2 - |\mathring{h}_{ij} \hat{A}_{ij}|^2 - |R^\perp (\nu_1)|^2 \right) \\
    & \quad + \left( \frac{3}{2}|\hat{A}|^4 - |\langle \hat{A}, \hat{A} \rangle|^2 - |\hat{R}^\perp|^2 \right)\\
    & \quad + \left(\frac{1}{nc_0 -1} - \frac{3}{2}\right)|\hat{A}|^4 +  \left(\frac{nc_0}{nc_0 -1}-4\right)|\mathring{h}|^2 |\hat{A}|^2 , \\
|\nabla^\perp A|^2 - c_0 |\nabla^\perp H|^2& =   c_0\left(\frac{n+2}{3} |\nabla^\perp A|^2 - |\nabla^\perp H|^2 \right) + \left(1 - \frac{n+2}{3}c_0  \right)|\nabla^\perp A|^2.
\end{align*}
Estimates of Huisken \cite{H84} and Andrews-Baker \cite{AB} show that for any $n \geq 2$:
\begin{align*}
    \frac{n+2}{3} |\nabla^\perp A|^2 - |\nabla^\perp H|^2 &\geq 0 , \\
    2|\mathring{h}|^2 |\hat{A}|^2 - |\mathring{h}_{ij} \hat{A}_{ij}|^2 - |R^\perp (\nu_1)|^2& \geq 0, \\
    \frac{3}{2}|\hat{A}|^4 - |\langle \hat{A}, \hat{A} \rangle|^2 - |\hat{R}^\perp|^2 & \geq 0. 
\end{align*}
It follows that $(\partial_t - \Delta ) f \geq 0$ for all $n \geq 2$ if 
\begin{align*}
    1 -\frac{n+2}{3}c_0 &\geq 0  \iff c_0 \leq \frac{3}{n+2}, \\
        \frac{nc_0}{nc_0-1} - 4 & \geq 0 \iff c_0 \leq \frac{4}{3n}, \\
   \frac{1}{nc_0 - 1} - \frac{3}{2} & \geq 0 \iff c_0 \leq \frac{5}{3n}.
\end{align*}
Imposing the strictest bound is how $c_0 \leq \frac{4}{3n}$ for $n \geq 2$ arises. 
\subsection{The constant $\frac{3(n+1)}{2n(n+2)}$ and the planarity estimate}

Now let $u := \frac{|\hat{A}|^2}{f}$. Then
\begin{align*}
(\partial_t -2\nabla_{\nabla f}- \Delta )u & =- \frac{2}{f} \left( u(c_0 |\langle A, H \rangle|^2 - |\langle A, A\rangle|^2 - |R^\perp|^2) - |\langle \hat{A}, \hat{A} \rangle|^2 - |\hat{R}^\perp|^2 - |R^\perp (\nu_1)|^2 \right)  \\
& \quad - \frac{2}{f}\left(  |\nabla^\perp \hat{A}|^2 +  u(|\nabla^\perp A|^2 - c_0 |\nabla^\perp H|^2)-2 Q_{ijk}\langle \hat{A}_{ij}, \nabla^\perp_k\nu_1 \rangle \right).
\end{align*}

To show $(\partial_t - 2\nabla_{\nabla f} - \Delta) u\leq0$, we estimate the reaction and gradient terms separately. For the reaction terms, we use the expansion of the previous section and collect the reaction terms coming from the evolution of $|\hat{A}|^2$ to obtain the idenity:
\begin{align*}
    &\quad u(c_0 |\langle A, H \rangle|^2 - |\langle A, A\rangle|^2 - |R^\perp|^2) - |\langle \hat{A}, \hat{A} \rangle|^2 -|\hat{R}^\perp|^2 - |R^\perp (\nu_1)|^2\\
    &= \frac{1}{nc_0-1} f|\hat{A}|^2  +|\mathring{h}_{ij}\hat{A}_{ij}|^2 + \frac{3}{2} |\hat{A}|^4 + 2|\mathring{h}|^2 |\hat{A}|^2 \\
    & \quad  +(2u + 1)\left( 2|\mathring{h}|^2 |\hat{A}|^2 - |\mathring{h}_{ij} \hat{A}_{ij}|^2 - |R^\perp (\nu_1)|^2 \right) \\
    & \quad + (u+1)\left( \frac{3}{2}|\hat{A}|^4 - |\langle \hat{A}, \hat{A} \rangle|^2 - |\hat{R}^\perp|^2 \right)\\
    & \quad +(u+2)\left(\frac{1}{nc_0 -1} - \frac{3}{2}\right)|\hat{A}|^4 + (u +1) \left(\frac{nc_0}{nc_0 -1}-4\right)|\mathring{h}|^2 |\hat{A}|^2.
\end{align*}
We arranged terms to impose the same constraint on \( c_0 \) as was required for positivity of \((\partial_t - \Delta) f\). However, the identity shows that controlling the negative terms from the evolution of \( |\hat{A}|^2 \) (those without a factor of $u$) actually requires less, thanks to the leftover positive terms on the first line.

For the gradient terms, following arguments in \cite{Na22}, for any $n \geq 2$ have the identity
\begin{align*}
    |\nabla^\perp \hat{A}|^2 
    & = \left(|\hat{\nabla}^\perp \hat{A}|^2 + |\mathring{h}|^2 |\nabla^\perp\nu_1|^2 - \frac{1}{2} |\hat{\nabla}^\perp \hat{A} + \mathring{h} \nabla^\perp \nu_1|^2\right)\\
    & \quad + \left( \frac{1}{2} |\hat{\nabla}^\perp \hat{A} + \mathring{h} \nabla^\perp \nu_1|^2 - \frac{n-1}{n(n+2)} |H|^2 |\nabla^\perp \nu_1|^2\right)\\
   & \quad + |\langle \nabla^\perp \hat{A}, \nu_1\rangle|^2 +\left( \frac{n-1}{(n+2)(nc_0-1)} \right) ( f +|\hat{A}|^2)|\nabla^\perp \nu_1|^2\\
   & \quad + \left( \frac{n-1}{(n+2)(nc_0-1)}-1 \right)  |\mathring{h}|^2|\nabla^\perp \nu_1|^2,
\end{align*}
which, using the improved gradient estimate (see \cite[(4.22)]{Na22}) and Cauchy's inequality, gives the estimate 
\begin{align}\label{eq:B1}
     |\nabla^\perp \hat{A}|^2 & \geq |\langle \nabla^\perp \hat{A}, \nu_1\rangle|^2 +\left( \frac{n-1}{(n+2)(nc_0-1)} \right) ( f +|\hat{A}|^2)|\nabla^\perp \nu_1|^2\\
\nonumber   & \quad+ \left( \frac{n-1}{(n+2)(nc_0-1)}-1 \right)  |\mathring{h}|^2|\nabla^\perp \nu_1|^2.
\end{align}
Similarly, we have the identity
\begin{align*}
     u(|\nabla^\perp A|^2 -  c_0 |\nabla^\perp H|^2) & = u\left(c_0 - \frac{1}{n} \right)\left(\frac{n(n+2)}{2(n-1)}|\langle \nabla^\perp \mathring{A},\nu_1 \rangle|^2  - |\nabla|H||^2\right)\\
     & \quad + u \left(|\hat{\nabla}^\perp \hat{A} + h \nabla^\perp \nu_1|^2  - \frac{3}{n+2} |H|^2 |\nabla^\perp \nu_1|^2 \right)\\
     & \quad + \frac{n}{nc_0-1} \Big(\frac{3}{n+2}-c_0\Big) (u|\hat{A}|^2 + u|\mathring{h}|^2+ |\hat{A}|^2) |\nabla^\perp \nu_1|^2 \\
     & \quad + \left(1 - \frac{(n+2)(nc_0-1)}{2(n-1)}\right)u|\langle \nabla^\perp \mathring{A},\nu_1 \rangle|^2, 
\end{align*}
which (see \cite[(4.20,4.21)]{Na22}) gives the estimate 
\begin{align}
    u(|\nabla^\perp A|^2 -  c_0 |\nabla^\perp H|^2) & \geq \frac{n}{nc_0-1} \Big(\frac{3}{n+2}-c_0\Big) (u|\hat{A}|^2 + u|\mathring{h}|^2+ |\hat{A}|^2) |\nabla^\perp \nu_1|^2 \\ 
  \nonumber   & \quad+ \left(1 - \frac{(n+2)(nc_0-1)}{2(n-1)}\right)u|\langle \nabla^\perp \mathring{A},\nu_1 \rangle|^2.
\end{align}
Finally, we use Young's inequality with constants $a_1, a_2, a_3 > 0$ to estimate
\begin{align}
    2 Q_{ijk}\langle \hat{A}_{ij}, \nabla^\perp_k\nu_1 \rangle & \leq a_2 u|\langle \nabla^\perp \mathring{A}, \nu_1 \rangle|^2 + \frac{1}{a_2} f |\nabla^\perp \nu_1|^2 + a_1 |\langle \nabla^\perp \hat{A}, \nu_1 \rangle|^2 +\frac{1}{a_1} |\hat{A}|^2 |\nabla^\perp \nu_1|^2\\
\nonumber   & \quad + a_3 \frac{|\hat{A}|^2}{|H|^2} |\nabla |H||^2 + \frac{1}{a_3} |\mathring{h}|^2 |\nabla^\perp \nu_1|^2,
\end{align}
and we bound the fifth term on the right hand side  by
\begin{equation}\label{eq:B4}
     a_3 \frac{|\hat{A}|^2}{|H|^2} |\nabla |H||^2  \leq a_3 \frac{(n+2)(nc_0-1)}{2(n-1)}u|\langle \nabla^\perp \mathring{A},\nu_1 \rangle|^2.
\end{equation}
Putting \eqref{eq:B1} through \eqref{eq:B4} together gives 
\begin{align*}
   & \quad |\nabla^\perp \hat{A}|^2 +  u(|\nabla^\perp A|^2 - c_0 |\nabla^\perp H|^2)-2 Q_{ijk}\langle \hat{A}_{ij}, \nabla^\perp_k\nu_1 \rangle\\ 
   & \geq (1-a_1)|\langle \nabla^\perp \hat{A}, \nu_1\rangle|^2 +\Big(\frac{4n-1}{(n+2)(nc_0-1)} -\frac{nc_0}{nc_0-1}-\frac{1}{a_1}\Big)  |\hat{A}|^2 |\nabla^\perp \nu_1|^2\\   
   & \quad +\left( \frac{n-1}{(n+2)(nc_0-1)} - \frac{1}{a_2} \right)f|\nabla^\perp \nu_1|^2 \\
 &  \quad + \left( \frac{n-1}{(n+2)(nc_0-1)}-1 - \frac{1}{a_3} \right)  |\mathring{h}|^2|\nabla^\perp \nu_1|^2 \\ 
   & \quad + \left(1 - (1+a_3)\frac{(n+2)(nc_0-1)}{2(n-1)} - a_2\right)u|\langle \nabla^\perp \mathring{A},\nu_1 \rangle|^2\\
  & \quad + \frac{n}{nc_0-1} \Big(\frac{3}{n+2}-c_0\Big) (u|\hat{A}|^2 + u|\mathring{h}|^2) |\nabla^\perp \nu_1|^2. 
\end{align*}
To make the inequality as sharp as possible, we see that one should take $a_1 = 1$. Then nonnegativity of the coefficient of $|\hat{A}|^2 |\nabla^\perp \nu_1|^2$ requires $c_0 \leq \frac{5n +1}{2n(n+2)}$ and nonnegativity of the term on the last line requires $c_0 \leq \frac{3}{n+2}$. The remaining coefficients involving $a_2, a_3$ require 
\begin{align*}
   1 - (1+a_3)\frac{(n+2)(nc_0-1)}{2(n-1)}-a_2 &\geq 0, \iff \frac{2(n-1)}{n(n+2)} \frac{1 -a_2}{1+a_3} +\frac{1}{n}\geq c_0\iff 2\frac{1-a_2}{1+a_3} \geq \frac{C_0}{C_n}, \\
    \frac{n-1}{(n+2)(nc_0-1)} - \frac{1}{a_2} & \geq 0 \iff  \frac{n-1}{n(n+2)}a_2 +  \frac{1}{n} \geq c_0\iff a_2 \geq \frac{C_0}{C_n},  \\
     \frac{n-1}{(n+2)(nc_0-1)}-1-\frac{1}{a_3} & \geq 0 \iff \frac{n-1}{n(n+2)} \frac{a_3}{1+a_3} + \frac{1}{n}\geq c_0  \iff \frac{a_3}{1+a_3} \geq \frac{C_0}{C_n},
\end{align*}
where $C_0:= c_0-1/n$ and $C_n:=\frac{n-1}{n(n+2)}.$ 
The maximum of $F(a_2, a_3) := \min\{a_2, 2\frac{1-a_2}{1+a_3}, \frac{a_3}{1+a_3}\}$ for $a_2, a_3 > 0$ occurs when $a_2 = \frac{1}{2}$ and $a_3 = 1$ and is $F(\frac{1}{2}, 1) = \frac{1}{2}$.
Therefore, we conclude that, in addition to the bounds above, we need that $c_0 \leq \frac{1}{n} + \frac{n-1}{2n(n+2)} = \frac{3(n+1)}{2n(n+2)}.$ Note this is always smaller than $\frac{5n+1}{2n(n+2)}$ and $\frac{3}{n+2}$ for all $n \geq 2$. 

In conclusion, the reaction terms and gradients have the correct sign to obtain the planarity estimate for all $n \geq 2$ under the assumption that 
\[
c_0 \leq c_n:=\min \left\{\frac{4}{3n}, \frac{3(n+1)}{2n(n+2)}\right\} = \begin{cases}
    \frac{3(n+1)}{2n(n+2)},  & n \leq 7 \\ \frac{4}{3n}, & n \geq 8
\end{cases}.
\]


\end{document}